\theoremstyle{plain}
\numberwithin{equation}{section}
\newtheorem{thm}[equation]{Theorem}
\newtheorem{lem}[equation]{Lemma}
\newtheorem{cor}[equation]{Corollary}
\newtheorem{prop}[equation]{Proposition}
\newtheorem{defn}[equation]{Definition}
\theoremstyle{remark}
\newtheorem{rem}[equation]{Remark}
\newtheorem{rems}[equation]{Remarks}
\newtheorem{ex}[equation]{Example}
\newcommand{\NN}{\mathbb{N}}
\newcommand{\inverse}{^{-1}}
\newcommand\ra{\rightarrow}
\newcommand\lra{\longrightarrow}
\newcommand{\ind}{{\rm ind}}
\newcommand{\iso}{\cong}
\newcommand{\Gg}{{\mathfrak g}}
\newcommand{\hh}{{\mathfrak h}}
\DeclareMathOperator\Ad{Ad}
\DeclareMathOperator\GL{GL}
\DeclareMathOperator\Mat{Mat}
\DeclareMathOperator\SL{SL}
\DeclareMathOperator\SP{Sp}
\DeclareMathOperator\Aut{Aut}
\DeclareMathOperator\Char{char}
\DeclareMathOperator\Lie{Lie}
\DeclareMathOperator\soc{soc}
\DeclareMathOperator\End{End}
\newcommand{\ovl}{\overline}
\subjclass[2010]{14L24 (20G15)}
\keywords{Geometric invariant theory; quotient variety; $G$-complete reducibility; \'etale slice; double cosets}
\title{Orbit Closures and Invariants}
\author[M.\ Bate]{Michael Bate}
\address
{Department of Mathematics,
University of York,
York YO10 5DD,
United Kingdom}
\email{michael.bate@york.ac.uk}
\author[H.\ Geranios]{Haralampos Geranios}
\address
{Department of Mathematics,
University of York,
York YO10 5DD,
United Kingdom}
\email{haralampos.geranios@york.ac.uk}
\author[B.\ Martin]{Benjamin Martin}
\address
{Department of Mathematics,
University of Aberdeen,
King's College,
Fraser Noble Building,
Aberdeen AB24 3UE,
United Kingdom}
\email{b.martin@abdn.ac.uk}
\date{\today}
\begin{document}

\begin{abstract}
Let $G$ be a reductive linear algebraic group, $H$ a reductive subgroup of $G$ and $X$ an affine $G$-variety.
Let $X^H$ denote the set of fixed points of $H$ in $X$, and $N_G(H)$ the normalizer of $H$ in $G$.
In this paper we study the natural map of quotient varieties $\psi_{X,H}\colon X^H/N_G(H) \to X/G$ induced by the inclusion $X^H \subseteq X$.
We show that, given $G$ and $H$, $\psi_{X,H}$ is a finite morphism for all affine $G$-varieties $X$ if and only if $H$ is a $G$-completely reducible subgroup of $G$
(in the sense defined by J-P. Serre); this was proved in characteristic $0$ by Luna in the 1970s.  We discuss some applications and give a criterion for $\psi_{X,H}$ to be an isomorphism.  We show how to extend some other results in Luna's paper to positive characteristic and also prove the following theorem.  Let $H$ and $K$ be reductive subgroups of $G$; then the double coset $HgK$ is closed for generic $g\in G$ if and only if $H\cap gKg^{-1}$ is reductive for generic $g\in G$.
\end{abstract}

\maketitle

\section{Introduction}\label{sec:intro}

The purpose of this paper is to establish some results in geometric invariant theory over fields of positive characteristic, where tools from characteristic 0---such as Luna's \'Etale Slice Theorem---are not available.  In particular, we prove the following theorem and give some applications (see Section~\ref{sec:notation} for precise definitions of terms).  Let $k$ be an algebraically closed field of characteristic $p\geq 0$.

\begin{thm}
\label{thm:mainthm}
Suppose $G$ is a reductive linear algebraic group over $k$ and $H$ is a reductive subgroup of $G$.
Then the following are equivalent:
\begin{itemize}
\item[(i)] $H$ is $G$-completely reducible;
\item[(ii)] $N_G(H)$ is reductive and, for every affine $G$-variety $X$, the natural map of quotients $\psi_{X,H}:X^H/N_G(H) \to X/G$ is a finite morphism (here $X^H$ denotes the $H$-fixed points in $X$).
\end{itemize}
\end{thm}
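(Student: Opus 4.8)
The plan is to prove the two implications separately; the implication (i)$\Rightarrow$(ii) carries the geometric content, while (ii)$\Rightarrow$(i) will be proved in contrapositive form, by converting a witness to the failure of $G$-complete reducibility into a morphism $\psi_{X,H}$ that is not finite. For (i)$\Rightarrow$(ii) I would first dispose of reductivity of $N_G(H)$ by citing the standard fact that $H$ being $G$-cr implies $N_G(H)$ is $G$-cr, hence reductive. For finiteness I would reduce to a linear action: any affine $G$-variety $X$ admits a $G$-equivariant closed embedding into a $G$-module $V$, and since $G$ and $N_G(H)$ are reductive the induced maps $X/G \ra V/G$ and $X^H/N_G(H) \ra V^H/N_G(H)$ are closed immersions, so finiteness of $\psi_{V,H}$ forces finiteness of $\psi_{X,H}$ by composing with these immersions. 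For $X=V$ linear, both quotients are cones and $\psi_{V,H}$ is a $\mathbb{G}_m$-equivariant graded morphism, hence finite if and only if the fibre over the vertex $\ovl 0 \in V/G$ is just the vertex of $V^H/N_G(H)$. Since points of the quotients correspond to closed orbits, this reduces to the assertion that the only closed $N_G(H)$-orbit in $V^H$ lying in the $G$-nullcone is $\{0\}$.

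The crux of (i)$\Rightarrow$(ii) is therefore the claim that if $v \in V^H$ satisfies $0 \in \ovl{G\cdot v}$ and $N_G(H)\cdot v$ is closed in $V^H$, then $v=0$. To prove it I would invoke Kempf's theory of optimal destabilising cocharacters, which is valid in all characteristics: a nonzero unstable $v$ has a canonical optimal cocharacter $\lambda$, and the associated parabolic $P(\lambda)$ contains the full stabiliser $G_v$, hence contains $H$. Because $H$ is $G$-cr, $H$ lies in a Levi subgroup $L(\mu)$ of $P(\lambda)$ with $P(\mu)=P(\lambda)$; then $\mu$ centralises $H$, and a short computation using that $\lambda$ and $\mu$ are $R_u(P(\lambda))$-conjugate shows that $\lim_{t\to 0}\mu(t)\cdot v = 0$ as well. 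Since $\mu$ is a cocharacter of $C_G(H) \subseteq N_G(H)$ stabilising $V^H$, this gives $0 \in \ovl{N_G(H)\cdot v}$, contradicting closedness unless $v=0$. Every ingredient here (Hilbert--Mumford, Kempf optimality, and the parabolic/Levi definition of $G$-complete reducibility) is characteristic-free, which is exactly what lets the argument bypass Luna's characteristic-$0$ slice-theorem proof.

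For (ii)$\Rightarrow$(i) I would argue the contrapositive, assuming $N_G(H)$ reductive and $H$ \emph{not} $G$-cr. Choosing a cocharacter $\lambda$ with $H \subseteq P(\lambda)$ and $H$ contained in no Levi of $P(\lambda)$, I would construct a $G$-module $V$ with an $H$-fixed vector $v$ that is $G$-unstable, so that $0 \in \ovl{G\cdot v}$; a natural source is the positive-weight part of a module such as $(\Gg/\mathfrak{p})^*$, where the failure of $H$ to split into a Levi produces a nonzero $H$-fixed vector. Replacing $v$ by a representative of the unique closed $N_G(H)$-orbit in $\ovl{N_G(H)\cdot v} \subseteq V^H$, which exists because $N_G(H)$ is reductive, the essential point is that this representative is nonzero, i.e.\ $v$ is $G$-unstable but $N_G(H)$-semistable. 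Then $\psi_{V,H}\inverse(\ovl 0)$ strictly contains the vertex, so $\psi_{V,H}$ is not finite, giving the required contradiction.

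I expect the main obstacle to be precisely this construction in (ii)$\Rightarrow$(i): producing $v \in V^H$ that is $G$-unstable yet has a nonzero closed $N_G(H)$-orbit in $V^H$. This is the exact converse of the Kempf argument above --- it asserts that the optimal $G$-destabiliser of $v$ cannot be conjugated into $C_G(H)$ --- and the delicate points are pinning down the module $V$ and verifying $N_G(H)$-semistability of $v$ via the Hilbert--Mumford criterion for the reductive group $N_G(H)$ acting on $V^H$. A secondary technical point to verify with care is the characteristic-free form of Kempf optimality, together with the descent of $G$- and $N_G(H)$-stable closed immersions to closed immersions of the quotient varieties.
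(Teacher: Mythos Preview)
Your approach to (i)$\Rightarrow$(ii) is correct and genuinely more elementary than the paper's. The paper spends three sections on this direction: first proving quasi-finiteness of $\psi_{X,H}$ (via a non-trivial count of $G_x$-cr conjugacy classes, invoking finiteness of $G_x^n/G_x\to G^n/G$), then proving the image is closed (via projective GIT and a delicate argument with roots of unity), and finally assembling finiteness from these using normalization and a variant of Zariski's Main Theorem. Your route bypasses all of this: after reducing to a $G$-module $V$, the map $\psi_{V,H}^*$ is a graded homomorphism of connected graded $k$-algebras, so by the graded Nakayama lemma it is finite as soon as the set-theoretic fibre over the cone point is the cone point; that in turn is exactly the statement that every $G$-unstable $v\in V^H$ is $N_G(H)$-unstable, which follows from Kempf optimality plus $G$-complete reducibility (this is the inclusion $(V^H)_{{\rm ss},N_G(H)}\subseteq V_{{\rm ss},G}$ that the paper also records). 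Two small repairs: the induced maps $X/G\to V/G$ and $X^H/N_G(H)\to V^H/N_G(H)$ are finite injections but \emph{not} closed immersions in positive characteristic (see the paper's Remark after the lemma on closed $G$-stable subvarieties); nevertheless the reduction still works, because a finite map followed by $\psi_{V,H}$ is finite, and this composite factors through $\psi_{X,H}$ followed by a map whose image contains the image of the relevant ring homomorphism. Also, in your Kempf step you should choose $\mu$ \emph{in the optimal class} $\Omega(v)$ whose associated Levi contains $H$; merely taking $\mu$ with $P_\mu=P_\lambda$ and $H\subseteq L_\mu$ does not by itself guarantee $\mu$ is $R_u(P_\lambda)$-conjugate to $\lambda$ or that $\lim_{t\to 0}\mu(t)\cdot v=0$.

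For (ii)$\Rightarrow$(i) there is a real gap, which you yourself flag. You need, given $H$ not $G$-cr, a $G$-module $V$ and $0\neq v\in V^H$ that is $G$-unstable but $N_G(H)$-semistable. Your suggestion of looking at $(\Gg/\mathfrak p)^*$ does not obviously produce such a $v$, and ``replace $v$ by the closed $N_G(H)$-orbit in its closure'' begs exactly the question of why that closed orbit is nonzero. The paper's construction is not the na\"ive one: it starts from a generic tuple $\mathbf{x}$ for $H$ (so $G\cdot\mathbf{x}$ is not closed), passes to a quotient by an auxiliary $\GL_n$-action to land in an affine $G$-variety with an $H$-fixed point having non-closed $G$-orbit, and then embeds that into a $G$-module. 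The key point---which your sketch does not supply---is that $N_G(H)^0=H^0C_G(H)^0$ \emph{fixes} $\mathbf{x}$ (since $C_G(H)$ centralises each entry), so the $N_G(H)$-orbit of the resulting $v$ is actually finite, hence automatically closed and nonzero. This is the missing idea you would need to complete the contrapositive.
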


\noindent The study of closed orbits is central in geometric invariant theory---the closed orbits for $G$ in $X$ parametrise the points of the quotient variety $X/G$.  An important piece of the proof of Theorem~\ref{thm:mainthm} is Proposition~\ref{prop:htog}, which gives a connection between the closed $G$-orbits in $X$ and the closed $H$-orbits in $X$; cf.\ \cite{luna}, \cite{rich1}, \cite{bateopt} and \cite{BMRT}, for example.

Theorem \ref{thm:mainthm} reduces to the main result in Luna's paper \cite{luna} when $k$ has characteristic $0$, because condition (i) and the first hypothesis of (ii) are automatic in characteristic $0$ if $H$ is already assumed to be reductive.  Luna's methods use the powerful machinery of \'etale slices, based on his celebrated ``\'Etale Slice Theorem'' \cite{luna0}; see Section~\ref{subsec:slices} below for more on \'etale slices.  Many useful consequences flow from the existence of an \'etale slice (see Proposition~\ref{prop:locstab} below, for example).  Although \'etale slices sometimes exist in positive characteristic \cite{BaRi}, in general they do not.  Our methods differ from Luna's in that they apply equally well in all characteristics.
These methods also allow us to provide extensions to positive characteristic of other results from \cite{luna} (see Proposition~\ref{prop:finquotfin}, Remark~\ref{rems:failure}(i) and Proposition~\ref{prop:luna2}).

The work in this paper fits into a broad continuing programme of taking results about algebraic groups and related structures from
characteristic zero and proving analogues in positive characteristic.  A basic problem with this process is that results---such as the existence of an \'etale slice---that are true when $p=0$ may simply fail when $p>0$ (cf.\ Examples~\ref{ex:noslice}, \ref{ex:sl2}, \ref{ex:harry1} and \ref{ex:harry2}); a further illustration of this in the context of this paper is that a reductive group may fail to be linearly reductive
(recall that a linear algebraic group is called \emph{reductive} if it has trivial unipotent radical, and \emph{linearly reductive}
if all its rational representations are semisimple).
When $p=0$, a connected group is linearly reductive if and only if it is reductive, whereas if $p>0$
a connected group is linearly reductive if and only if it is a torus \cite{nagata}.
Even if a result remains true in positive characteristic, it may be much harder to prove,
an example here being the problem of showing that the ring of invariants $R^G$ is finitely generated,
where $R$ is a finitely-generated $k$-algebra and $G \subseteq \Aut(R)$ is reductive.
This was resolved in characteristic $0$ in the 1950s, but not in positive characteristic until the 1970s
(see the introduction to Haboush's paper \cite{haboush0}).

In some contexts in positive characteristic where the hypothesis of reductivity is too weak and linear reductivity is too strong,
it has been found that a third notion, that of $G$-complete reducibility, provides a good balance (cf.\ \cite[Cor.\ 1.5]{martin3}) and our main theorem is another example of this phenomenon. See Section \ref{subsec:cr} for the definition. The idea is that when $p=0$ there is no distinction between demanding that
a subgroup $H$ of a reductive group $G$ is reductive or linearly reductive or $G$-completely reducible,
but there is a huge difference in positive characteristic.
The notion of complete reducibility was introduced by Serre \cite{serre} and Richardson\footnote{Richardson
originally defined \emph{strong reductivity} for subgroups of $G$, but his notion was shown to be equivalent to Serre's in \cite[Thm.\ 3.1]{BMR}.}
\cite{rich3}, and over the past twenty years or so has found many applications in the theory of
algebraic groups, their subgroup structure and representation theory, geometric invariant theory, and the theory of buildings:
for examples, see \cite{bateopt}, \cite{BMR}, \cite{GIT}, \cite{ls}, \cite{martin1}, \cite{martin2}, \cite{mcninch}, \cite{stewartG2}, \cite{stewartF4}, \cite{stewart}.

The paper is set out as follows.  Section~\ref{sec:notation} contains preparatory material from geometric invariant theory and the theory of complete reducibility.  The proof of Theorem~\ref{thm:mainthm} contains three main ingredients, each dealt with in a separate section.  In Section~\ref{sec:prelims} we build on work of Bardsley and Richardson to establish the important technical result Proposition~\ref{prop:finquotfin}, which gives a criterion for a map of quotient varieties to be finite.  In Section~\ref{sec:lunapartone} we carry out our analysis of the closed $G$- and $H$-orbits and show that $\psi_{X,H}$ is quasi-finite if $H$ is $G$-completely reducible (Theorem~\ref{thm:quasifinite}).  In Section~\ref{sec:lunaparttwo} we show that the image of $\psi_{X,H}$ is closed (Theorem~\ref{thm:closed}).  The key idea here is to consider the map of projectivisations ${\mathbb P}(X^H)\ra {\mathbb P}(X)$ induced by the inclusion of $X^H$ in $X$ when $X$ is a $G$-module; the $G$-complete reducibility of $H$ guarantees that we get a well-defined map of quotient varieties ${\mathbb P}(X^H)/G\ra {\mathbb P}(X)/G$.  Section~\ref{sec:lunapartthree} draws these strands together and completes the proof of Theorem~\ref{thm:mainthm} using Proposition~\ref{prop:insep_ZMT} (a variation on Zariski's Main Theorem).

Section~\ref{sec:sep} gives a criterion for $\psi_{X,H}$ to be an isomorphism onto its image (Theorem~\ref{thm:normaliso}).  In Section~\ref{sec:ex} we use representation theory to construct some examples relevant to Theorem~\ref{thm:mainthm}.  In Section~\ref{sec:dblecosets} we give a criterion (Theorem~\ref{thm:closedcrit}) for generic double cosets $HgK$ of $G$ to be closed, where $H$ and $K$ are reductive subgroups of $G$.  Luna proved a stronger result \cite{luna00} in characteristic 0 using \'etale slice methods, but our techniques work when \'etale slices are not available.  We give some applications of Theorem~\ref{thm:closedcrit} (Examples~\ref{ex:B2} and \ref{ex:E7}); these serve as applications of Theorem~\ref{thm:mainthm} as well.  We finish in Section~\ref{sec:Gcr} by using the theory we have developed to prove some results on complete reducibility.

\medskip
\noindent
\emph{Acknowledgements}:
The first author would like to thank Sebastian Herpel for the conversations we had which led to the first iteration of some of the ideas in this paper,
and also Stephen Donkin for some very helpful nudges towards the right literature.
All three authors acknowledge the funding of EPSRC grant  EP/L005328/1.
We would like to thank the anonymous referee for their very insightful comments and for pointing out a subtle gap in the proof of Theorem~\ref{thm:mainthm}.


\section{Notation and Preliminaries}\label{sec:notation}

\subsection{Notation}
Our basic references for the theory of linear algebraic groups are the books \cite{borel} and \cite{springer}.
Unless otherwise stated, we work over a fixed algebraically closed field $k$ with no restriction on the characteristic.  By a variety we mean a quasi-projective variety over $k$, and we identify a variety $X$ with its set of $k$-points.
For a linear algebraic group $G$ over $k$, we let $G^0$ denote the connected component of $G$ containing the identity element $1$
and $R_u(G) \unlhd G^0$ denote the unipotent radical of $G$.
We say that $G$ is \emph{reductive} if $R_u(G) = \{1\}$; note that we do not require a reductive group to be connected.
When we discuss subgroups of $G$, we really mean \emph{closed} subgroups;
for two such subgroups $H$ and $K$ of $G$, we set $HK:=\{hk\mid h \in H, k\in K\}$.
We denote the centralizer of a subgroup $H$ of $G$ by $C_G(H)$, and the normalizer by $N_G(H)$.  All group actions are left actions unless otherwise indicated.

We make repeated use of the following result \cite[Lem.\ 6.8]{martin1}: if $G$ is reductive and if $H$ is a reductive subgroup of $G$ then $N_G(H)^0= H^0C_G(H)^0$.

Given a linear algebraic group $G$, let $Y(G)$ denote the set of cocharacters of $G$, where a \emph{cocharacter} is a homomorphism
of algebraic groups $\lambda:k^* \to G$.
Note that since the image of a cocharacter is connected, we have $Y(G) = Y(G^0)$.
A linear algebraic group $G$ acts on its set of cocharacters: for $g \in G$, $\lambda \in Y(G)$ and $a \in k^*$, we
set $(g\cdot\lambda)(a) = g\lambda(a)g\inverse$.

Given an affine variety $X$ over $k$, we denote the coordinate ring of $X$ by $k[X]$ and the function field of $X$ (when $X$ is irreducible) by $k(X)$.
Given $x \in X$, we let $T_x(X)$ denote the tangent space to $X$ at $x$.
Recall that for a linear algebraic group $G$, $T_1(G)$ has the structure of a Lie algebra, which we also denote by $\Lie(G)$ or $\Gg$.
Given a morphism $\phi:X \to Y$ of affine varieties $X$ and $Y$ and a point $x \in X$, we let
$d_x\phi:T_x(X) \to T_{\phi(x)}(Y)$ denote the differential of $\phi$ at $x$.
We say that $X$ is a \emph{$G$-variety} if the linear algebraic group $G$ acts morphically on $X$.
If $X$ is affine then the action of $G$ on $X$ gives a linear action of $G$ on $k[X]$, defined by $(g\cdot f)(x) = f(g\inverse\cdot x)$
for all $g \in G$, $f \in k[X]$ and $x \in X$.
Given a $G$-variety $X$ and $x \in X$, we denote the $G$-orbit through $x$ by $G\cdot x$ and
the stabilizer of $x$ in $G$ by $G_x$.
If $x,y \in X$ are two points on the same $G$-orbit, then we sometimes say $x$ and $y$ are \emph{$G$-conjugate}.  For $x\in X$, we denote the orbit map $G\ra G\cdot x$, $g\mapsto g\cdot x$ by $\kappa_x$; we say the orbit $G\cdot x$ is {\em separable} if $\kappa_x$ is separable.
We denote by $X^G$ the set of $G$-fixed points in $X$, and by $k[X]^G$ the ring of $G$-invariant functions in $k[X]$.

Given a morphism of varieties $f\colon V\ra W$, define $e(v)$ for $v\in V$ to be ${\rm max}({\rm dim}(Z))$, where $Z$ ranges over the irreducible components of $f^{-1}(f(v))$ that contain $v$.  By \cite[AG.10.3]{borel}, $e(v)$ is an upper semi-continuous function of $v$.  This implies the following useful result about dimensions of stabilizers for a $G$-variety $X$ \cite[Lem.~3.7(c)]{newstead}: for any $r\in {\mathbb N}\cup \{0\}$, the set $\{x\in X\mid {\rm dim}(G_x)\geq r\}$ is closed.  We deduce the lower semi-continuity of orbit dimension: that is, for any $r\in {\mathbb N}\cup \{0\}$, the set $\{x\in X\mid {\rm dim}(G\cdot x)\leq r\}$ is closed.  In particular, the set $\{x\in X\mid \mbox{${\rm dim}(G\cdot x)$ is maximal}\}$ is open.  We also need an infinitesimal version of these results.  Given a variety $Z$, we denote the (reduced) tangent bundle of $Z$ by $TZ$; we may identify $TZ$ with the set of pairs $\{(z,v)\mid z\in Z, v\in T_z(Z)\}$, and we have a canonical embedding from $Z$ to $TZ$ given by $z\mapsto (z,0)$.  (The tangent bundle is constructed in \cite[AG.16]{borel} as a possibly non-reduced scheme over $k$; here we take the tangent bundle to be the corresponding reduced scheme.)
If $\psi\colon Z\ra W$ is a morphism of varieties then we have a map $d\psi\colon TZ\ra TW$ given by $d\psi(z,v)= (\psi(z), d_z\psi(v))$.

\begin{lem}
\label{lem:semicontinuity}
 For any $r\in {\mathbb N}\cup \{0\}$, the set $\{x\in X\mid {\rm dim}(G_x)+ {\rm dim}({\rm ker}(d_1\kappa_x))\geq r\}$ is closed.
\end{lem}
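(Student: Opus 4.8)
The plan is to show that each of the two functions $x\mapsto {\rm dim}(G_x)$ and $x\mapsto {\rm dim}({\rm ker}(d_1\kappa_x))$ is upper semi-continuous (in the sense that the locus where the value is $\geq r$ is closed), and then to combine them. The first of these is precisely \cite[Lem.~3.7(c)]{newstead} as quoted above, so the real work lies in the infinitesimal term ${\rm dim}({\rm ker}(d_1\kappa_x))$. For this I would assemble the differentials $d_1\kappa_x$ into a single morphism of varieties as $x$ ranges over $X$.

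Let $\mu\colon G\times X\ra X$ be the action morphism $\mu(g,x)=g\cdot x$, so that $\kappa_x=\mu(\,\cdot\,,x)$ and hence $d_1\kappa_x(\xi)=d_{(1,x)}\mu(\xi,0)$ for $\xi\in\Gg$. Applying the tangent bundle construction recalled above to $\mu$ gives a morphism $d\mu\colon T(G\times X)\ra TX$, and pre-composing with the morphism $\Gg\times X\ra T(G\times X)$, $(\xi,x)\mapsto\bigl((1,x),(\xi,0)\bigr)$, produces a morphism
\[
\Phi\colon \Gg\times X\lra TX,\qquad \Phi(\xi,x)=\bigl(x,\,d_1\kappa_x(\xi)\bigr).
\]
For each fixed $x$, the map $\Phi(\,\cdot\,,x)\colon\Gg\ra T_x(X)$ is the linear map $d_1\kappa_x$; note that its kernel is the same whether its target is taken to be $T_x(X)$ or $T_x(G\cdot x)$, so we lose nothing by using the ambient space $T_x(X)$, which varies nicely with $x$ (whereas $T_x(G\cdot x)$ lives on a different orbit for each $x$).

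To extract ranks, fix a basis $\xi_1,\dots,\xi_m$ of $\Gg$ (so $m={\rm dim}(G)$) and a closed embedding $X\sse {\mathbb A}^N$, under which $T_x(X)\sse k^N$ and $TX$ sits inside $X\times{\mathbb A}^N$. Projecting each $x\mapsto\Phi(\xi_j,x)$ onto the fibre coordinates ${\mathbb A}^N$ exhibits the entries of the $N\times m$ matrix $M(x)$ representing $d_1\kappa_x$ as regular functions of $x$. Since ${\rm dim}({\rm ker}(d_1\kappa_x))=m-{\rm rank}(M(x))$ and the locus where every $s\times s$ minor of $M(x)$ vanishes is closed, the set $\{x\mid {\rm dim}({\rm ker}(d_1\kappa_x))\geq m-s\}$ is closed for all $s$; that is, $x\mapsto{\rm dim}({\rm ker}(d_1\kappa_x))$ is upper semi-continuous.

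Finally I would combine the two statements. Both functions take values in $\{0,1,\dots,{\rm dim}(G)\}$, and for any two such functions $f,g$ one has
\[
\{x\mid f(x)+g(x)\geq r\}=\bigcup_{\substack{0\leq a,b\leq {\rm dim}(G)\\ a+b\geq r}}\bigl(\{x\mid f(x)\geq a\}\cap\{x\mid g(x)\geq b\}\bigr),
\]
a finite union of closed sets and hence closed. Taking $f(x)={\rm dim}(G_x)$ and $g(x)={\rm dim}({\rm ker}(d_1\kappa_x))$ then yields the lemma. I expect the one genuinely delicate point to be verifying that $\Phi$ is a morphism of varieties---i.e.\ that the differentials $d_1\kappa_x$ vary algebraically with $x$, rather than forming a merely pointwise family of linear maps; once this is established, the minor and finite-union arguments are routine.
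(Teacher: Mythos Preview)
Your proof is correct, but it takes a genuinely different route from the paper's. The paper packages both summands into a \emph{single} fibre-dimension computation: it considers the map $\alpha\colon G\times X\ra X\times X$, $\alpha(g,x)=(g\cdot x,x)$, passes to the induced map $d\alpha\colon T(G\times X)\ra T(X\times X)$, and observes that the fibre of $d\alpha$ over the point $((x,0),(x,0))$ is (up to an identification via right translation) the set $\{((g,v),(x,0))\mid g\in G_x,\ v\in\ker(d_g\kappa_x)\}$, each irreducible component of which has dimension ${\rm dim}(G_x)+{\rm dim}(\ker(d_1\kappa_x))$. The lemma then follows from one application of upper semi-continuity of fibre dimension. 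By contrast, you treat the two summands separately---quoting \cite[Lem.~3.7(c)]{newstead} for ${\rm dim}(G_x)$ and using an explicit matrix-of-minors argument for ${\rm dim}(\ker(d_1\kappa_x))$---and then combine them via the finite-union identity. Your approach is more elementary and in fact yields the slightly sharper statement that each summand is individually upper semi-continuous; the paper's approach is more uniform, handling both terms in one stroke without ever choosing coordinates. Incidentally, the point you flag as ``genuinely delicate'' (that $\Phi$ is a morphism) is not: you have already written $\Phi$ as a composite of $d\mu$ with an evident closed embedding $\Gg\times X\hookrightarrow T(G\times X)$, and $d\mu$ is a morphism by the very construction of the tangent-bundle functor recalled just before the lemma.
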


\begin{proof}
 Define $\alpha\colon G\times X\ra X\times X$ by $\alpha(g,x)= (g\cdot x, x)$.  We obtain a morphism $d\alpha$ from $T(G\times X)\iso TG\times TX$ to $T(X\times X)\iso TX\times TX$.  Let $x\in X$ and consider the point $y:= ((x,0), (x,0))\in TX\times TX$.  Now $(d\alpha)^{-1}(y)$ is a closed subset $C_y$ of $TG\times TX$; it is clear that $C_y= \{((g,v),(x,0))\mid g\in G_x, v\in {\rm ker}(d_1\kappa_x))\}$.  Each irreducible component of this set has dimension $e'(y):= {\rm dim}(G_x)+ {\rm dim}({\rm ker}(d_1\kappa_x))$.
 
Define a function $s\colon X\ra TG\times TX$ by $s(x)= ((1,0), (x,0))$.  We identify $X$ with a closed subset of $TX\times TX$ via the embedding $x\mapsto ((x,0), (x,0))$.  Since $s$ is a morphism, we deduce from the upper semi-continuity of the function $e(v)$---taking $(V,W,f)= (TG\times TX, TX\times TX, d\alpha)$---that the function $e'(y)$ is also upper semi-continuous.  The result now follows.
\end{proof}

A morphism $\phi:X \to Y$ of affine varieties is said to be \emph{finite} if the coordinate ring $k[X]$ is integral
over the image of the comorphism $\phi^*:k[Y] \to k[X]$.
Finite morphisms are closed \cite[Prop.\ I.7.3(i)]{Mumfordbook}; in particular, a dominant finite morphism is surjective.
A morphism of affine varieties is called \emph{quasi-finite} if its fibres are finite; finite morphisms are always quasi-finite \cite[Prop.\ I.7.3(ii)]{Mumfordbook},
but the converse is not true.
A dominant morphism $\phi:X\to Y$ of irreducible varieties is called \emph{birational} if the comorphism induces an isomorphism
of function fields $k(X)\iso k(Y)$.
Given an irreducible affine variety $X$, we can form the \emph{normalization} of $X$ by
considering the normal affine variety $\widetilde{X}$ whose coordinate ring is the integral closure of $k[X]$ in the function field $k(X)$.
The normalization map $\nu_X:\widetilde{X} \to X$ is, by construction, finite, birational and surjective.

\begin{rem}
\label{rem:fin_fact}
We record an observation which we use several times in the sequel. Let $\phi:X\to Y$ and $\psi:Y\to Z$ be morphisms of affine varieties with $\psi\circ \phi$ finite. Then it is easy to see that:
\begin{itemize}
\item[(i)] $\phi$ is finite;
\item[(ii)] if $\phi$ is dominant then $\psi$ is finite.
\end{itemize}
\end{rem}

We say that a property $P(x)$ holds for \emph{generic} $x\in X$ if there is an open dense subset $U$ of $X$ such that $P(x)$ holds for all $x\in U$.

\medskip
For the remainder of the paper, we fix the convention that $G$ denotes a {\bf reductive} linear algebraic group over $k$.

\subsection{Group actions and quotients}
\label{sec:catquot}

The main result of this paper, Theorem \ref{thm:mainthm}, concerns quotients of affine varieties by reductive algebraic group actions.
Let $X$ be an affine $G$-variety.
As noted above, $G$ acts on $k[X]$, and we can form the subring $k[X]^G \subseteq k[X]$ of $G$-invariant functions on $X$.
It follows from \cite{nagata2} and \cite{haboush0} that $k[X]^G$ is finitely generated, and hence we can form an affine variety denoted $X/G$ with
coordinate ring $k[X/G] = k[X]^G$.
Moreover, the inclusion $k[X]^G \hookrightarrow k[X]$ gives rise to a morphism from $X$ to $X/G$, which we shall denote by $\pi_{X,G}:X \to X/G$.  The map $\pi_{X,G}$ has the following properties
\cite[Thm.\ A.1.1]{mumford}, \cite[Thm.\ 3.5]{newstead}, \cite[$\S$2]{BaRi}:
\begin{itemize}
\item[(i)] $\pi_{X,G}$ is surjective;
\item[(ii)] $\pi_{X,G}$ is constant on $G$-orbits in $X$;
\item[(iii)] $\pi_{X,G}$ separates disjoint closed $G$-invariant subsets of $X$;
\item[(iv)] each fibre of $\pi_{X,G}$ contains a unique closed $G$-orbit, and $\pi_{X,G}$ determines a bijective map from the set of closed $G$-orbits in $X$ to $X/G$;
\item[(v)] $X/G$ is a {\em categorical quotient} of $X$: that is, for every variety $V$ and every morphism $\psi\colon X\ra V$ which is constant on $G$-orbits, there is a unique morphism $\psi_G\colon X/G\ra V$ such that $\psi= \psi_G\circ \pi_{X,G}$.
\end{itemize}
(This means $\pi_{X,G}$ is a {\em good quotient} in the sense of \cite[Chapter 3, $\S4$, p57]{newstead}.  More generally, if $X$ is a quasi-projective $G$-variety and $\pi$ is a map from $X$ to another quasi-projective variety $Y$ then we call $\pi$ a good quotient if it is an affine map and satisfies (i)--(v) above.)  We say that $\pi_{X,G}\colon X\ra X/G$ is a {\em geometric quotient} if the fibres of $\pi_{X,G}$ are precisely the $G$-orbits.  This is the case if and only if every $G$-orbit is closed (for instance, if every $G$-orbit has the same dimension---e.g., if $G$ is finite).

If $\phi:Y\to X$ is a $G$-equivariant morphism of affine $G$-varieties, then
the restriction of the comorphism to $k[X]^G$ induces a natural morphism from $Y/G$ to $X/G$, which we shall denote by $\phi_G$.
In a special case of this construction, we have the following result, which follows from \cite[Thm.\ 3.5, Lem.\ 3.4.1]{newstead}.  

\begin{lem}
\label{lem:insep}
Let $X$ be an affine $G$-variety and let $i:Y \to X$ be an embedding of a closed $G$-stable subvariety $Y$ in $X$.  Then $\pi_{X,G}(Y)$ is closed in $X/G$.  Moreover, the induced map $i_G\colon Y/G\ra X/G$ is injective and finite.
\end{lem}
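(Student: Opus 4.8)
The plan is to translate everything into commutative algebra via coordinate rings and then treat the three assertions --- closedness of the image, injectivity, and finiteness --- separately, with finiteness being the substantive point. Since $i$ is a closed embedding, the comorphism $i^*\colon k[X]\ra k[Y]$ is surjective, so $k[Y]\iso k[X]/I$ where $I$ is the $G$-stable ideal of functions vanishing on $Y$. Restriction to invariants gives $i_G^*\colon k[X]^G\ra k[Y]^G=(k[X]/I)^G$, whose kernel is $I^G:=I\cap k[X]^G$; thus the image of $i_G^*$ is $k[X]^G/I^G$.

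For closedness I would show $\pi_{X,G}(Y)=V(I^G)$, the zero set of $I^G$ in $X/G=\operatorname{Spec}k[X]^G$. The inclusion $\pi_{X,G}(Y)\sse V(I^G)$ is immediate, since every $f\in I^G$ vanishes on $Y$. For the reverse, given $z\notin\pi_{X,G}(Y)$, the unique closed orbit $O$ in the fibre $\pi_{X,G}^{-1}(z)$ is disjoint from $Y$ (if it met $Y$ it would lie in $Y$, as $Y$ is $G$-stable, forcing $z\in\pi_{X,G}(Y)$); property (iii) then supplies an invariant $h\in k[X]^G$ with $h|_Y=0$ and $h|_O=1$, so $h\in I^G$ yet $\bar h(z)=1$, giving $z\notin V(I^G)$. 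Hence $\pi_{X,G}(Y)=V(I^G)$ is closed, with coordinate ring $k[X]^G/I^G$.

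Injectivity is then a consequence of property (iv). The points of $Y/G$ biject with the closed $G$-orbits in $Y$, and since $Y$ is closed in $X$ an orbit is closed in $Y$ precisely when it is closed in $X$; the map $i_G$ sends the class of such an orbit to the same orbit regarded as a point of $X/G$. Distinct closed orbits in $Y$ are distinct closed orbits in $X$, and so have distinct images under $\pi_{X,G}$ by (iv); whence $i_G$ is injective.

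The finiteness assertion is the main obstacle, and is where geometric reductivity of $G$ (Haboush's theorem) enters: in characteristic $0$ linear reductivity makes $i_G^*$ surjective and $i_G$ a closed embedding, but in positive characteristic one must work harder. Factoring $i_G$ as $Y/G\ra\pi_{X,G}(Y)\hookrightarrow X/G$, it suffices to prove that $k[Y]^G$ is a finite module over the image $k[X]^G/I^G$. I would establish integrality of an arbitrary $\bar b\in(k[X]/I)^G$ as follows: lift $\bar b$ to $b\in k[X]$ and let $M$ be the finite-dimensional $G$-submodule spanned by the orbit $G\cdot b$; then $N:=M\cap I$ is a submodule with $M/N\iso k$ (trivial), since $g\cdot b\equiv b\pmod I$ for all $g$. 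Applying geometric reductivity to the surjection $M\ra M/N\iso k$ yields $d\geq 1$ and an invariant $u\in(S^dM)^G$ whose image in $S^d(M/N)\iso k$ is nonzero; multiplying out in $k[X]$ carries $u$ to an element $a\in k[X]^G$ whose image in $k[Y]^G$ is a nonzero scalar multiple of $\bar b^{\,d}$. Thus $\bar b^{\,d}$ lies in the image of $k[X]^G$, so $\bar b$ is integral over $k[X]^G/I^G$. Since $k[Y]^G$ is finitely generated as a $k$-algebra (by the finite generation of invariants) and integral over the finitely generated subalgebra $k[X]^G/I^G$, it is a finite module over it, and hence over $k[X]^G$; therefore $i_G$ is finite. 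Alternatively, all three properties may be read off directly from \cite[Thm.\ 3.5, Lem.\ 3.4.1]{newstead}.
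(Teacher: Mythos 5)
Your proof is correct and is in substance the same as the paper's, which simply cites \cite[Thm.\ 3.5, Lem.\ 3.4.1]{newstead}: those references establish exactly the closedness/separation properties you use for the image and injectivity, and the geometric-reductivity (Nagata--Haboush) argument you spell out for integrality of $k[Y]^G$ over the image of $k[X]^G$ is precisely the content of Newstead's Lemma 3.4.1. All steps check out, including the reduction of $M/(M\cap I)$ to the trivial module and the passage from $(S^dM)^G$ to an invariant in $k[X]^G$ hitting a nonzero multiple of $\bar b^{\,d}$.
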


\begin{rem}
\label{rem:noiso}
If ${\rm char}(k)= 0$ then $i_G$ is an isomorphism onto its image.  This need not be the case in positive characteristic: see Example~\ref{ex:noslice}.
\end{rem}

We record some other useful results.  First, note that if $G$ is a finite group, then the map $\pi_G$ above is a finite morphism.
To see this, let $f\in k[X]$ and let $T$ be an indeterminate.
Then the polynomial $F(T):=\prod_{g\in G} (T-g\cdot f)\in (k[X])[T]$ is monic and has coefficients in $k[X]^G$,
and $F(f) = 0$.
This shows that $k[X]$ is integral over $k[X]^G$, which gives the claim.

If $X$ is irreducible and normal then $X/G$ is normal \cite[2.19(a)]{BaRi}, while if $G$ is connected then $k[X]^G$ is integrally closed in $k[X]$ \cite[2.4.1]{BaRi}. 

Now suppose $H$ is a subgroup of $G$ such that the normalizer $N_G(H)$ is reductive.
Then the inclusion $X^H \subseteq X$ induces a map of quotients $\psi_{X,H}:X^H/N_G(H) \to X/G$.
Theorem \ref{thm:mainthm} asserts that when $H$ is a $G$-completely reducible subgroup of $G$ (in the sense of Section~\ref{subsec:cr} below),
this map is always a finite morphism.

For technical reasons, we sometimes need to work with affine $G$-varieties satisfying an extra property.  

\begin{defn}
 Let $X$ be an affine $G$-variety.  We denote by $X_{\rm cl}$ the closure of the set $\{x\in X\mid G\cdot x\ \mbox{is closed}\}$.  
 Following Luna \cite[Sec.~4]{luna0}, we say that $X$ has {\em good dimension } (``bonne dimension'') if $X_{\rm cl}= X$.  We say that $x$ is a {\em stable point} of $X$ for the $G$-action if ${\rm dim}(G\cdot x)$ is maximal and $G\cdot x$ is closed \cite[Ch. 3, $\S$4]{newstead}, \cite[Ch. 1, $\S$4]{mumford}.
\end{defn}

\begin{rem}
\label{rem:stableopen}
The set of stable points is open \cite[Ch. 3, $\S$4]{newstead}, \cite[Ch. 1, $\S$4]{mumford} (this is true even without the assumption that $G$ is reductive).  Since the set $\{x\in X\mid \mbox{${\rm dim}(G\cdot x)$ is maximal}\}$ is open, it follows that if $X$ is irreducible then $X$ has good dimension if and only if there exists a stable point.  Moreover, if $X$ has good dimension then generic fibres of $\pi_{X,G}\colon X\ra G$ are orbits of $G$.  Hence if $X$ is irreducible then ${\rm dim}(X/G)= {\rm dim}(X)- m$, where $m$ is the maximal orbit dimension.
\end{rem}

\begin{lem}
\label{lem:fnfldquot}
 Let $X$ be an irreducible affine $G$-variety with good dimension.  Then $k(X/G)= k(X)^G$.  Moreover, $\pi_{X,G}$ is separable.
\end{lem}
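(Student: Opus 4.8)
The statement splits into the field equality $k(X/G)=k(X)^G$ and the separability of $\pi_{X,G}$, and I would handle these in turn, using the first to feed the second. For the equality, recall that $k[X/G]=k[X]^G$, so composing and quotienting invariant \emph{regular} functions shows $\operatorname{Frac}(k[X]^G)\subseteq k(X)^G$ inside $k(X)$; via $\pi_{X,G}^*$ this is the inclusion $k(X/G)\subseteq k(X)^G$, and only the reverse inclusion has content. The plan is to descend a given invariant rational function through the quotient on a suitable principal open set. So take $f\in k(X)^G$, let $W$ be its (automatically $G$-stable) domain of definition, and put $Z:=X\setminus W$, a proper closed $G$-stable subset.

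The crux of this first part — and the place where good dimension is indispensable — is to locate a principal open $D(h)$ with $h\in k[X]^G$ over which $f$ becomes an invariant regular function. By Lemma~\ref{lem:insep} the set $\pi_{X,G}(Z)$ is closed. Using that generic fibres of $\pi_{X,G}$ are single $G$-orbits (Remark~\ref{rem:stableopen}), I would argue $\pi_{X,G}(Z)\neq X/G$: if every good fibre met the $G$-stable set $Z$ it would lie in $Z$, forcing the dense open $\pi_{X,G}^{-1}(\text{good locus})$ into the proper closed set $Z$, which is absurd. Hence there is a basic open $D(h)\subseteq (X/G)\setminus\pi_{X,G}(Z)$, and then $\pi_{X,G}^{-1}(D(h))=D(\pi_{X,G}^*h)\subseteq W$. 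On this affine open $f$ is regular, so $f\in k[D(\pi_{X,G}^*h)]=k[X]_{\pi_{X,G}^*h}$, and since $f$ is $G$-invariant and localisation commutes with taking invariants at the invariant element $h$, we get $f\in (k[X]_{\pi_{X,G}^*h})^G=(k[X]^G)_h\subseteq\operatorname{Frac}(k[X]^G)=k(X/G)$. This yields $k(X)^G\subseteq k(X/G)$ and hence equality; I expect placing a whole fibre inside the domain of $f$ (which is exactly what fails without good dimension) to be the main obstacle.

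For separability we may assume $\Char(k)=p>0$, as otherwise every extension is separable. Having identified $k(X/G)=k(X)^G$, it suffices to prove that $k(X)/k(X)^G$ is separable, and here I would invoke a purely field-theoretic argument of Artin type applied to the group $\Gamma:=G(k)$ of automorphisms of $K:=k(X)$, whose fixed field is exactly $F:=k(X)^G$. Each $\gamma\in\Gamma$ extends to $K^{1/p}$ by $\gamma(x^{1/p})=\gamma(x)^{1/p}$ and therefore fixes $F^{1/p}$ pointwise. Now I would check that $K$ and $F^{1/p}$ are linearly disjoint over $F$ (Mac Lane's criterion for separability): given $a_1,\dots,a_n\in F^{1/p}$ that are $F$-linearly independent but satisfy a nontrivial relation $\sum_i\lambda_i a_i=0$ of minimal length with $\lambda_i\in K$ and $\lambda_1=1$, applying any $\gamma\in\Gamma$ fixes the $a_i$ and subtracting gives a shorter relation $\sum_i(\lambda_i-\gamma(\lambda_i))a_i=0$ whose first term vanishes; minimality forces $\gamma(\lambda_i)=\lambda_i$ for all $i$ and all $\gamma$, so $\lambda_i\in K^\Gamma=F$, contradicting $F$-linear independence. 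Thus $K/F$ is separable, i.e. $\pi_{X,G}$ is separable. I would note that this last argument needs nothing about $G$ beyond its acting as field automorphisms with the correct fixed field, so the real weight of the lemma rests on the good-dimension input used in the first part.
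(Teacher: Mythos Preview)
Your argument for the field equality $k(X/G)=k(X)^G$ is essentially the same as the paper's: both descend an invariant rational function to a principal invariant open set $X_h$ with $h\in k[X]^G$, then observe that $f=f'/h^r$ with $f'$ forced to be invariant. You spell out more carefully why such an $h$ exists (closedness of $\pi_{X,G}(Z)$ via Lemma~\ref{lem:insep}, properness via the generic-fibre statement in Remark~\ref{rem:stableopen}), whereas the paper asserts this in one line, but the substance is identical.

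For separability the approaches genuinely diverge. The paper simply cites \cite[2.1.9(b)]{BaRi}. Your route is a self-contained Artin-style argument: having established $k(X)^G=k(X/G)$, you verify Mac~Lane's criterion by showing $K=k(X)$ and $F^{1/p}$ are linearly disjoint over $F=k(X)^G$, using that every $\gamma\in G(k)$ extends to $K^{1/p}$ and fixes $F^{1/p}$ pointwise, then applying the standard minimal-relation trick. This is correct (note $K^\Gamma=F$ since $k$ is algebraically closed, so $G=G(k)$), and has the virtue of being elementary and transparent about where the hypothesis enters: your separability proof uses nothing beyond the identification $K^\Gamma=F$, so all of the good-dimension content is concentrated in the first half, exactly as you remark. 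The citation in the paper hides this dependence but is of course shorter.
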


\begin{proof}
 It is clear that $k(X/G)$ is a subfield of $k(X)^G$.  Conversely, let $f\in k(X)^G$.  Set 
 $$
 U= \{x\in X\mid \mbox{there exists $h_1,h_2\in k[X]$ such that $f= h_1/h_2$ and $h_2(x)\neq 0$}\}.
 $$  
 Then $U$ is a nonempty open subset of $X$, and clearly $U$ is $G$-stable.  Hence $C:= X\backslash U$ is closed and $G$-stable.  As $X$ has good dimension, there exists $0\neq h\in k[X]^G$ such that $h|_C= 0$.  Now $f$ is a globally defined regular function on the corresponding principal open set $X_h$, so $f\in k[X_h]= k[X][1/h]$.  Hence $\displaystyle f= \frac{f'}{h^r}$ for some $f'\in k[X]$ and some $r\geq 0$.  Then $f'$ is $G$-invariant, since $f$ is, so $f\in k(X/G)$.
 
 The second assertion is \cite[2.1.9(b)]{BaRi}.  Note that separability can fail if $X$ does not have good dimension: see \cite{MaNe}.
\end{proof}

\begin{lem}
\label{lem:finitevis}
 Let $\phi\colon X\ra Y$ be a finite surjective $G$-equivariant map of affine $G$-varieties.
 \begin{itemize}
  \item[(i)] For all $x\in X$, $G\cdot x$ is closed if and only if $G\cdot \phi(x)$ is closed.  Moreover, if $y\in Y$ and $G\cdot y$ is closed then $\phi^{-1}(G\cdot y)$ is a finite union of $G$-orbits, each of which is closed and has the same dimension as $G\cdot y$.
  \item[(ii)] The map $\phi_G\colon X/G\ra Y/G$ is quasi-finite.
  \item[(iii)] $X$ has good dimension if and only if $Y$ does.
 \end{itemize}
\end{lem}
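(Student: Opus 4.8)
The plan is to prove (i) first, since both (ii) and (iii) follow from it once combined with the general facts already recorded: finite morphisms are closed with finite fibres (hence dimension-preserving on any subvariety), orbit dimension is lower semi-continuous, and each fibre of a quotient map contains a unique closed orbit. The two workhorses throughout are $G$-equivariance, which gives $\phi(G\cdot x)=G\cdot\phi(x)$, and closedness of $\phi$. I expect the genuine obstacle to be the converse direction of (i); everything else is bookkeeping built on top of it.

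For (i), one implication is immediate: if $G\cdot x$ is closed then $\phi(G\cdot x)=G\cdot\phi(x)$ is the image of a closed set under the closed map $\phi$, hence closed. The substance is the converse. Suppose $G\cdot y$ is closed, where $y=\phi(x)$, and set $Z:=\phi^{-1}(G\cdot y)$; this is a closed, $G$-stable subset containing $x$, and $\phi$ restricts to a finite surjective morphism $Z\ra G\cdot y$, so $\dim Z= \dim(G\cdot y)=:d$. The key point is that \emph{every} orbit in $Z$ has dimension exactly $d$: for $z\in Z$ we have $\phi(z)\in G\cdot y$, hence $G\cdot\phi(z)=G\cdot y$, and $\phi$ maps $G\cdot z$ onto $G\cdot y$ with finite fibres, forcing $\dim(G\cdot z)=d$. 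Since the boundary $\overline{G\cdot z}\setminus G\cdot z$ is a union of orbits of strictly smaller dimension and lies in the $G$-stable closed set $Z$, it must be empty; thus every orbit in $Z$ is closed, and $G\cdot x$ in particular. This argument simultaneously yields the ``moreover'' clause: each orbit in $Z$ is an irreducible closed subset of dimension $d=\dim Z$, hence an irreducible component of $Z$, so there are only finitely many of them, all of dimension $d$.

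Part (ii) is then formal. A point of $X/G$ is a closed orbit $O$ in $X$, and by (i) its image $\phi(O)=G\cdot\phi(x)$ is again a closed orbit; commutativity of the square relating $\phi$, $\phi_G$ and the quotient maps gives $\phi_G(\pi_{X,G}(O))=\pi_{Y,G}(\phi(O))$. Fixing $\bar y\in Y/G$ with corresponding closed orbit $G\cdot y$, and using that each fibre of $\pi_{Y,G}$ contains a unique closed orbit, the fibre $\phi_G^{-1}(\bar y)$ consists exactly of the closed orbits contained in $\phi^{-1}(G\cdot y)$. By the ``moreover'' clause of (i) these are finite in number, so $\phi_G$ is quasi-finite.

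For (iii) I would work with the closed-orbit loci $S_X:=\{x\mid G\cdot x\text{ closed}\}$ and $S_Y:=\{y\mid G\cdot y\text{ closed}\}$, which by (i) satisfy $S_X=\phi^{-1}(S_Y)$, so that good dimension amounts to density of these loci. If $S_X$ is dense then, for any nonempty open $V\sse Y$, the set $\phi^{-1}(V)$ is nonempty (surjectivity) and open, hence meets $S_X$, and its image meets $S_Y\cap V$; thus $S_Y$ is dense, so $Y$ has good dimension, and this direction needs only topology. For the converse I would use a dimension count with $X$ irreducible (the relevant case; here I would reduce to, or restrict attention to, irreducible $X$): the set $W:=\overline{S_X}=\overline{\phi^{-1}(S_Y)}$ is closed, and since $\phi$ is closed, $\phi(W)$ is a closed set containing $S_Y$, hence equal to $Y$; as $\phi|_W$ is finite this gives $\dim W=\dim Y=\dim X$, forcing $W=X$. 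The delicate point is precisely this control of $\overline{S_X}$ through the closedness of $\phi$ together with the passage to irreducible components, since for the converse implication one really does need the good behaviour of $\phi$ on each component rather than a purely topological argument.
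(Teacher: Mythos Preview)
Your proposal is correct and follows essentially the same approach as the paper: both establish (i) by showing every $G$-orbit in $\phi^{-1}(G\cdot y)$ has dimension $\dim(G\cdot y)$ and is therefore a union of irreducible components of $\phi^{-1}(G\cdot y)$ (you via the quasi-finite surjection $G\cdot z\to G\cdot y$, the paper via the equivalent inequality $\dim G_x\le\dim G_y$), and (ii) is handled identically. For (iii) the paper simply writes ``Part (iii) now follows'', so your more detailed treatment---including the reduction to irreducible $X$ for the converse direction---is a reasonable elaboration of what the paper leaves implicit.
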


\begin{proof}
 If $x\in X$ and $G\cdot x$ is closed then $G\cdot \phi(x)= \phi(G\cdot x)$ is closed, as $\phi$ is finite. Conversely, let $y\in Y$ such that $G\cdot y$ is closed, and let $n= {\rm dim}(G\cdot y)$.  Let $x\in \phi^{-1}(G\cdot y)$.  Then ${\rm dim}(G_x)\leq {\rm dim}(G_y)$, so ${\rm dim}(G\cdot x)\geq {\rm dim}(G\cdot y)= n$.  But $\phi$ is finite, so every irreducible component of $\phi^{-1}(G\cdot y)$ has dimension $n$.  It follows that ${\rm dim}(G\cdot x)= n$ and $G\cdot x$ is a union of irreducible components of $\phi^{-1}(G\cdot y)$; in particular, $G\cdot x$ is closed.  This proves (i).  Part (iii) now follows.
 
 To prove part (ii), let $x\in X$, $y\in Y$ such that $\phi_G(\pi_{X,G}(x))= \pi_{Y,G}(y)$.  Without loss of generality, we can assume that $G\cdot x$ and $G\cdot y$ are closed.  Now $G\cdot \phi(x)$ is closed by (i), so we must have $G\cdot \phi(x)= G\cdot y$, so $x\in \phi^{-1}(G\cdot y)$.  But $\phi^{-1}(G\cdot y)$ is a finite union of $G$-orbits by (i), so we are done.
\end{proof}

\begin{lem}
\label{lem:finitebirat}
 Let $\phi\colon X\ra Y$ be a finite birational $G$-equivariant morphism of irreducible affine $G$-varieties.  If one of $X$ or $Y$ has good dimension then $\phi_G\colon X/G\ra Y/G$ is birational.
\end{lem}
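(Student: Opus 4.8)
The plan is to reduce the statement to a computation with function fields, exploiting the fact that for a variety of good dimension the function field of the quotient is exactly the field of invariant rational functions. First I would observe that, being birational, $\phi$ is dominant, and a dominant finite morphism is surjective; hence $\phi$ is finite and surjective and Lemma~\ref{lem:finitevis} applies. In particular, part~(iii) of that lemma tells us that the hypothesis that one of $X$, $Y$ has good dimension forces both to have good dimension.

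Next I would check that $\phi_G$ is dominant, which is needed even to speak of birationality. The comorphism $\phi_G^*\colon k[Y]^G\ra k[X]^G$ is the restriction to invariant subrings of the comorphism $\phi^*\colon k[Y]\ra k[X]$. Since $\phi$ is dominant, $\phi^*$ is injective, hence so is its restriction $\phi_G^*$; as $X/G$ and $Y/G$ are irreducible, this shows $\phi_G$ is dominant.

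Since both $X$ and $Y$ now have good dimension, Lemma~\ref{lem:fnfldquot} identifies $k(X/G)= k(X)^G$ and $k(Y/G)= k(Y)^G$. Under these identifications the field comorphism $\phi_G^*\colon k(Y/G)\ra k(X/G)$ is just the restriction to $G$-invariants of the comorphism $\phi^*\colon k(Y)\ra k(X)$, which is an isomorphism of fields because $\phi$ is birational; moreover $\phi^*$ is $G$-equivariant because $\phi$ is. The remaining point is purely formal: a $G$-equivariant isomorphism of fields restricts to an isomorphism on the fixed subfields. Injectivity is clear, and for surjectivity, given $f\in k(X)^G$ I would take the unique $h\in k(Y)$ with $\phi^*(h)= f$ and observe that $\phi^*(g\cdot h)= g\cdot\phi^*(h)= g\cdot f= f= \phi^*(h)$ for every $g\in G$, so that $g\cdot h= h$ by injectivity of $\phi^*$, whence $h\in k(Y)^G$. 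Thus $\phi_G^*$ induces an isomorphism $k(Y/G)\iso k(X/G)$, and $\phi_G$ is birational.

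The only step requiring substantive input, as opposed to formal manipulation of comorphisms, is the identification of the quotient function field with the invariant function field in the third paragraph: in general one has only the inclusion $k(X/G)\sse k(X)^G$, and it is precisely the good-dimension hypothesis, via Lemma~\ref{lem:fnfldquot}, that upgrades this to an equality and makes the argument go through.
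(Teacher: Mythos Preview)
Your proof is correct and follows essentially the same route as the paper's own argument: use Lemma~\ref{lem:finitevis}(iii) to get that both varieties have good dimension, then invoke Lemma~\ref{lem:fnfldquot} to identify $k(X/G)= k(X)^G$ and $k(Y/G)= k(Y)^G$, and conclude from the $G$-equivariant birationality of $\phi$. The paper compresses the last step into a single line (``$k(Y/G)= k(X/G)= k(X)^G$''), whereas you spell out the dominance of $\phi_G$ and the formal check that a $G$-equivariant field isomorphism restricts to an isomorphism on invariants---but the underlying argument is the same.
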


\begin{proof}
By Lemma~\ref{lem:finitevis}(iii), if one of $X$ or $Y$ has good dimension then they both do.  It follows from Lemma~\ref{lem:fnfldquot} that $k(Y/G)= k(X/G)= k(X)^G$; hence $\phi_G$ is birational.
\end{proof}

Later we also need some material on constructing quotients of projective varieties by actions of reductive groups, but we delay this until Section~\ref{sec:lunaparttwo}.

Suppose $H$ is a subgroup of $G$.
Recall that the quotient $G/H$ (which as a set is just the coset space) has the structure of a
quasi-projective homogeneous $G$-variety,
and $H$ is the stabilizer of the image of $1 \in G$ under the natural map $\pi_{G,H}:G \to G/H$.
Richardson has proved the following in this situation (\cite[Thm.\ A]{rich}; see also \cite{haboush}).

\begin{thm}\label{thm:richquotient}
Suppose $H$ is a subgroup of $G$.
Then $G/H$ is an affine variety if and only if $H$ is reductive.
\end{thm}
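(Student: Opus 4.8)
The plan is to prove the two implications separately, using the categorical quotient machinery recorded above as the common framework; the unifying observation is that the homogeneous space $G/H$ is itself a single $G$-orbit, so statements about $G/H$ become statements about closed orbits and their stabilizers. For the implication that reductivity of $H$ forces $G/H$ to be affine, I would let $H$ act on the affine variety $G$ by right translation, $g\cdot h:=gh$. Since $H$ is reductive, $k[G]^H$ is finitely generated (\cite{nagata2}, \cite{haboush0}), so there is an affine quotient variety $Q$ with coordinate ring $k[G]^H$ and a quotient map $\pi\colon G\ra Q$ with the standard properties (i)--(v) of categorical quotients by reductive groups. The crucial point is that the orbits of this action are exactly the cosets $gH$, all of dimension $\dim H$; since $\overline{gH}\setminus gH$ is a union of orbits of strictly smaller dimension, each coset is closed. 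Hence every fibre of $\pi$ is a single closed orbit, so $\pi$ is a geometric quotient. As the homogeneous space map $G\ra G/H$ is also a geometric quotient for the right $H$-action, uniqueness of categorical quotients (property (v)) gives $G/H\iso Q$, which is affine.

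For the converse I would first recast the statement in the language of closed orbits: the backward implication is equivalent to the assertion that \emph{if a reductive group $G$ acts on an affine variety $X$ and $G\cdot x$ is closed, then the stabilizer $G_x$ is reductive}. Indeed, if $G/H$ is affine then it is an irreducible homogeneous affine $G$-variety, hence a single closed orbit --- that of the base point $eH$, whose stabilizer is $H$ --- and the displayed assertion then forces $H$ to be reductive; conversely a closed orbit $G\cdot x$ is affine and isomorphic to $G/G_x$, so the assertion is in turn a consequence of the theorem. No genuine simplification is lost in this reformulation, but the problem is now in the form to which the Hilbert--Mumford--Kempf instability theory can be applied.

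The main obstacle is establishing this closed-orbit criterion in positive characteristic. When $p=0$ one may run Matsushima's argument, using linear reductivity of $G_x$ to split the normal space to the orbit $G$-equivariantly; this breaks down when $p>0$, where a reductive group need not be linearly reductive. I would instead substitute Haboush's theorem on geometric reductivity \cite{haboush0} for linear reductivity: embedding the closed orbit as a closed $G$-stable subset of a rational $G$-module and assuming for contradiction that $U:=R_u(G_x)\neq\{1\}$, one combines geometric reductivity with the existence of optimal destabilizing cocharacters to derive a contradiction with the closedness of $G\cdot x$. Carrying out this final step --- controlling the non-separability and failure of linear reductivity that occur in characteristic $p$ --- is the delicate part, and is exactly what is supplied by Richardson \cite{rich} and Haboush \cite{haboush}.
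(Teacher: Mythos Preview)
The paper does not actually prove this theorem: it merely states it and attributes it to Richardson \cite[Thm.\ A]{rich} (see also Haboush \cite{haboush}). So there is no ``paper's own proof'' to compare against beyond a citation.

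Your forward implication ($H$ reductive $\Rightarrow$ $G/H$ affine) is a correct and standard argument, and in fact supplies more detail than the paper does. The key observation that all right $H$-cosets in $G$ have the same dimension, hence are closed, so that the GIT quotient $\operatorname{Spec} k[G]^H$ is a \emph{geometric} quotient and therefore coincides with the homogeneous space $G/H$ by uniqueness of categorical quotients, is exactly right.

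For the converse you correctly identify the reformulation (closed orbit $\Rightarrow$ reductive stabilizer) and correctly flag that Matsushima's characteristic-zero argument fails when $p>0$. However, your sketch of the positive-characteristic argument --- ``assuming $R_u(G_x)\neq\{1\}$, one combines geometric reductivity with optimal destabilizing cocharacters to derive a contradiction with closedness of $G\cdot x$'' --- is not really a proof, and is not obviously how Richardson's argument runs (Kempf's optimality theory \cite{kempf} postdates \cite{rich}). Since you then explicitly defer the ``delicate part'' to \cite{rich} and \cite{haboush}, your treatment of this direction is, in substance, the same as the paper's: a citation. That is fine, but you should not present the vague sentence about destabilizing cocharacters as if it were the outline of an argument you could complete; better simply to say that this direction is the content of Richardson's theorem and cite it.
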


Recall also that the Zariski topology on $G/H$ is the quotient topology: that is, a subset $S \subseteq G/H$ is closed in $G/H$
if and only if $\pi_{G,H}\inverse(S)$ is closed in $G$.
We need a technical result.

\begin{lem}
\label{lem:goodmodule}
 Let $H$ be a reductive subgroup of $G$.  There exist a $G$-module $Y$ and a nonempty open subset $U$ of $Y^H$ such that the following hold:
 \begin{itemize}
 \item[(i)] $G_y= H$ for all $y\in U$;
 \item[(ii)] $G\cdot y$ is closed for all $y\in U$;
 \item[(iii)] $N_G(H)\cdot y$ is closed for all $y\in U$.
 \end{itemize}
\end{lem}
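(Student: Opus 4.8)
The plan is to take $Y = G/H$ itself, with $G$ acting by left translation. Since $H$ is reductive, Theorem~\ref{thm:richquotient} guarantees that $G/H$ is an affine variety, so this is a legitimate affine $G$-variety. The key feature of this choice is that the three required properties will hold not merely on some open subset but on the \emph{entire} fixed-point set, so I can take $U = Y^H$.

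First I would identify $Y^H$ explicitly. A coset $gH$ is fixed by every $h \in H$ precisely when $g^{-1}Hg \subseteq H$; since conjugation by $g$ is an automorphism, $g^{-1}Hg$ is a closed subgroup of $H$ of the same dimension and the same number of connected components as $H$, forcing $g^{-1}Hg = H$, i.e.\ $g \in N_G(H)$. Hence $Y^H = N_G(H)/H$, which is nonempty (it contains the base point $eH$), and I set $U := Y^H$.

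Next I would verify the three conditions directly. For (i): the stabilizer of $gH$ in $G$ is exactly $gHg^{-1}$, and for $g \in N_G(H)$ this is $H$ on the nose. For (ii): $G$ acts transitively on $G/H$, so $G\cdot y = Y$ for every $y \in Y^H$, which is certainly closed in $Y$. For (iii): since $N_G(H)$ acts transitively on $N_G(H)/H$, we have $N_G(H)\cdot y = N_G(H)/H = Y^H$ for every $y \in U$; and $Y^H$, being the fixed-point set of the $H$-action, is closed in $Y$, so $N_G(H)\cdot y$ is closed.

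The only step requiring genuine care is the computation $Y^H = N_G(H)/H$ together with the observation that this set is a single closed $N_G(H)$-orbit; once these are in hand, (i)--(iii) are immediate. There is no real obstacle, precisely because working with $G/H$ rather than an ambient $G$-module makes every relevant stabilizer literally a conjugate of $H$, so the stabilizers are exactly $H$ with no loss in the finite part, and the orbits in question are the whole homogeneous space or its closed fixed-point locus. (Were a later application to require $Y$ to be a $G$-module, one would instead embed the affine variety $G/H$ $G$-equivariantly as a closed orbit $G\cdot v$ in a $G$-module $V$, using a finite-dimensional generating submodule of $k[G/H]$, with $v$ the image of $eH$, and then pass to an open subset of $V^H$; the additional work there lies in controlling stabilizers at points other than $v$, but the bare statement above needs none of this.)
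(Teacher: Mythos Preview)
Your proof is correct and considerably cleaner than the paper's. Taking $Y=G/H$ with the left-translation action, the identification $Y^H=N_G(H)/H$ is exactly as you argue (the step $g^{-1}Hg\subseteq H\Rightarrow g^{-1}Hg=H$ via matching dimension and component count is valid for closed subgroups), and then (i)--(iii) hold on the whole of $Y^H$, so $U=Y^H$ works.

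The paper takes a genuinely different route: it first embeds $G/H$ equivariantly in a $G$-module $X$, then sets $Y=X\oplus X$ and uses a constructibility argument to show that the locus in $Y^H$ where $G_y=H$ exactly (and not a proper finite extension) is open and dense; it then invokes the openness of stable points to get (ii) and (iii) on a further open subset. The reason for this extra work is not the lemma as stated but its use in Section~\ref{sec:lunapartthree}: there the authors form $V=X\oplus Y$ as a \emph{direct sum of $G$-modules}, embed $X$ via $x\mapsto(x,0)$, and need the point $(0,y_0)\in V$, so $Y$ must be a $G$-module with a distinguished zero. The lemma's label (\texttt{lem:goodmodule}) signals this intended output, even though the statement only asks for a $G$-variety. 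You correctly anticipated this in your closing parenthetical: for the application one really does need the $G$-module version, and the difficulty the paper's argument addresses---that passing from $G/H$ to an ambient module can enlarge stabilizers by finite groups at nearby fixed points, so one must cut down to an open set where $G_y=H$ on the nose---is genuine and cannot be avoided there. For the bare statement, however, your argument is complete.
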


\begin{proof}
 Since $H$ is reductive, $G/H$ is affine.  The group $G$ acts on $G/H$ by left multiplication.  Let $x_0= \pi_{G,H}(1)$; then $G_{x_0}= H$.  If $K$ is a reductive subgroup of $G$ containing $H$ then $K\cdot \pi_{G,H}(1)= \pi_{G,H}(K)$ is closed, as $K$ is a closed subset of $G$ that is stable under right multiplication by $H$.  We can embed $G/H$ equivariantly in a $G$-module $X$.  By the lower semi-continuity of orbit dimension, there is a nonempty open subset $U_1$ of $X^H$ such that ${\rm dim}(G_x)= {\rm dim}(H)$ for all $x\in U_1$---so $G_x$ is a finite extension of $H$ for all $x\in U_1$.  If ${\rm char}(k)= 0$ then we can conclude from Proposition \ref{prop:locstab} that there is an open neighbourhood $O$ of $x_0$ such that $G_x\leq H$ for all $x\in O$.  It then follows (applying the arguments for (ii) and (iii) below) that we can take $Y$ to be $X$ and $U$ a suitable nonempty open subset of $X^H\cap O$.  In general, however, we need a slightly more complicated construction.
 
 Let $Y$ be the $G$-module $X\oplus X$.  
 Note that $Y^H= X^H\oplus X^H$ and for any $(y_1,y_2)\in Y^H$, $G_{(y_1,y_2)}= G_{y_1}\cap G_{y_2}$.  We show that $Y$ has the desired properties.  For each $r\geq 0$, define
 $$ C_r= \{y\in U_1\times U_1\mid |G_y:H|\geq r\}. $$
Then $C_r$ is empty for all but finitely many $r$ by \cite[Lem.\ 2.2 and Defn.\ 2.3]{martin3}.  Moreover, $C_r$ is constructible.  For let
$$ \widetilde{C}_r= \{(y,g_1,\ldots, g_r)\mid y\in U_1\times U_1, g_1,\ldots, g_r\in G_y, g_jg_i^{-1}\not\in H\ \mbox{for $1\leq i,j\leq r$}\}; $$
then $C_r$ is the image of $\widetilde{C}_r$ under projection onto the first factor.  Set $D_r= C_r\backslash C_{r+1}$.  Then the nonempty $D_r$ form a finite collection of disjoint constructible sets that cover the irreducible set $U_1\times U_1$, so $D_s$ contains a nonempty open subset $U_2$ of $U_1\times U_1$ for precisely one value of $s$.

We show that $s= 1$.  Suppose not.  Choose $y= (x_1,x_2)\in U_2$.  Let $g_1, g_2, \ldots, g_r$ be coset representatives for $G_{x_1}/H$ with $g_1\in H$.  Note that $U_3= \{x\in U_1\mid (x_1,x)\in U_2\}$ is an open dense subset of $X^H$.  Let $z= (x_1,x)\in U_2$.  Then our hypothesis means that $g\cdot (x_1,x)= (x_1,x)$ for some $g\not\in H$.  Now $g$ must fix $x_1$, so $g\in g_iH$ for some $i\geq 1$; in fact, $i\geq 2$ since $g\not\in H$.  It follows that $g_i$ fixes $(x_1,x)$ since $H$ fixes $(x_1,x)$, so $g_i$ fixes $x$.  But $\bigcup_{j=2}^r (X^{g_j}\cap X^H)$ is a proper closed subset of $X^H$ as none of the $g_j$ for $j\geq 2$ fixes $x_0$, so we have a contradiction.  We conclude that $s= 1$ after all.  Hence $G_y= H$ for all $y\in U_2$.

Set $y_0= (x_0,0)$.  The orbit $N_G(H)\cdot x_0$ is closed in $G/H$, so the orbit $N_G(H)\cdot y_0$ is closed in $Y^H$.  Moreover, $N_G(H)_{y_0}= H$, so $N_G(H)\cdot y_0$ has maximal dimension among the $N_G(H)$-orbits on $Y^H$.  Hence $y_0$ is a stable point of $Y^H$ for the $N_G(H)$-action.  A similar argument shows that $y_0$ is a stable point of $\overline{G\cdot Y^H}$ for the $G$-action.  Since the set of stable points is open in each case, we can find a nonempty open subset $U$ of $U_2$ such that (ii) and (iii) hold for $U$; then (i) holds for $U$ by construction.  This completes the proof.
\end{proof}


\subsection{Cocharacters, $G$-actions and R-parabolic subgroups}
\label{sec:Rpar}

Suppose that $X$ is a $G$-variety.
For any cocharacter $\lambda \in Y(G)$ and $x \in X$
we can define a morphism $\psi = \psi_{x,\lambda}:k^* \to X$
by $\psi(a) = \lambda(a)\cdot x$ for each $a \in k^*$.
We say that the \emph{limit $\lim_{a\to 0} \lambda(a)\cdot x$ exists} if $\psi$
extends to a morphism $\overline{\psi}: k \to X$.
If the limit exists, then the extension $\overline{\psi}$ is unique, and we set
$\lim_{a\to 0} \lambda(a)\cdot x = \overline{\psi}(0)$.
It is clear that, for any $G$ and $X$, if there exists $\lambda \in Y(G)$ such that $\lim_{a\to 0}\lambda(a)\cdot x$ exists
but lies outside $G\cdot x$, then $G\cdot x$ is not closed in $X$.

A subgroup $P$ of $G$ is called a \emph{parabolic subgroup} if the quotient $G/P$ is complete; this is the case if and only if $G/P$ is projective.
If $G$ is connected and reductive, then all parabolic subgroups of $G$ have a \emph{Levi decomposition} $P = R_u(P)\rtimes L$, where the reductive subgroup $L$ is called a \emph{Levi subgroup} of $P$.
In this case, the unipotent radical $R_u(P)$ acts simply transitively on the set of Levi subgroups of $P$,
and given a maximal torus $T$ of $P$ there exists a unique Levi subgroup of $P$ containing $T$.
For these standard results see \cite{borel}, \cite{boreltits} or \cite{springer} for example.
It is possible to extend these ideas to a non-connected reductive group using the formalism of \emph{R-parabolic subgroups}
described in \cite[Sec.\ 6]{BMR}.  We give a brief summary; see {\em loc.\ cit.}\ for further details.
Given a cocharacter $\lambda \in Y(G)$, we have:
\begin{itemize}
\item[(i)] $P_\lambda := \{g \in G \mid \lim_{a\to 0}\lambda(a)g\lambda(a)\inverse \textrm{ exists}\}$
is a parabolic subgroup of $G$; we call a parabolic subgroup arising in this way an \emph{R-parabolic subgroup of $G$}.
\item[(ii)] $L_\lambda := C_G(\lambda) = \{g \in G \mid \lim_{a\to 0}\lambda(a)g\lambda(a)\inverse = g\}$
is a Levi subgroup of $P_\lambda$; we call a Levi subgroup arising in this way an \emph{R-Levi subgroup of $G$}.
\item[(iii)] $R_u(P_\lambda) = \{g \in G \mid \lim_{a\to 0}\lambda(a)g\lambda(a)\inverse = 1\}$.
\end{itemize}
The R-parabolic (resp.\ R-Levi) subgroups of a connected reductive group $G$ are the same as the parabolic and Levi subgroups of $G$.
Moreover, the results listed above for parabolic and Levi subgroups of connected reductive algebraic groups
also hold for R-parabolic and R-Levi subgroups of non-connected reductive groups; that is,
the unipotent radical $R_u(P)$ acts simply transitively on the set of R-Levi subgroups of an R-parabolic subgroup $P$,
and given a maximal torus $T$ of $P$ there exists a unique R-Levi subgroup of $P$ containing $T$.

Now, if $H$ is a reductive subgroup of $G$ and $\lambda \in Y(H)$, then $\lambda$ gives rise in a natural
way to R-parabolic and R-Levi subgroups of both $G$ and $H$.
In such a situation, we reserve the notation $P_\lambda$ (resp.\ $L_\lambda$) for R-parabolic (resp.\ R-Levi) subgroups of $G$, and use the notation
$P_\lambda(H)$, $L_\lambda(H)$, etc. to denote the corresponding subgroups of $H$.
Note that for $\lambda \in Y(H)$, it is obvious from the definitions that $P_\lambda(H) = P_\lambda \cap H$,
$L_\lambda(H) = L_\lambda \cap H$ and $R_u(P_\lambda(H)) = R_u(P_\lambda) \cap H$.

\subsection{$G$-complete reducibility}
\label{subsec:cr}

Our main result, and many of the intermediate ones,
uses the framework of $G$-complete reducibility introduced by J-P. Serre \cite{serre},
which has been shown to have geometric implications in \cite{BMR} and subsequent papers.
We give a short recap of some of the key ideas concerning complete reducibility.

Let $H$ be a subgroup of $G$.
Following Serre (see, for example, \cite{serre}), we say that $H$ is \emph{$G$-completely reducible} ($G$-cr for short)
if whenever $H \subseteq P$ for an R-parabolic subgroup $P$ of $G$, there exists an R-Levi subgroup $L$ of $P$ such that $H \subseteq L$.  For example, if $G= \SL_n(k)$ or $\GL_n(k)$ then $H$ is $G$-cr if and only if the inclusion of $H$ is completely reducible in the usual sense of representation theory.  If $H$ is $G$-cr then $H$ is reductive, while if $H$ is linearly reductive then $H$ is $G$-cr (see \cite[Sec.\ 2.4 and Sec.\ 6]{BMR}).   Hence in characteristic $0$, $H$ is $G$-cr if and only if $H$ is reductive.

In \cite{bateopt} and \cite{martin3} it was shown that the notion of complete reducibility is useful when one considers $G$-varieties and,
as explained in the introduction, one of the purposes of this paper is to expand upon this theme.

The geometric approach to complete reducibility outlined in \cite{BMR}  rests on the following construction,
which was first given in this form in \cite{GIT}.
Given a subgroup $H$ of a reductive group $G$ and a positive integer $n$,
we call a tuple of elements $\mathbf{h} \in H^n$ a \emph{generic tuple} for $H$ if there exists a closed embedding of $G$ in some $\GL_m(k)$ such that $\mathbf{h}$ generates the associative subalgebra of $m\times m$ matrices spanned by $H$ \cite[Defn.\ 5.4]{GIT}.
A generic tuple for $H$ always exists for sufficiently large $n$.
Suppose $\mathbf{h} \in H^n$ is a generic tuple for $H$; then in \cite[Thm.\ 5.8(iii)]{GIT} it is shown that
$H$ is $G$-completely reducible if and only if the $G$-orbit of $\mathbf{h}$ in $G^n$ is closed,
where $G$ acts on $G^n$ by simultaneous conjugation.

\subsection{Optimal cocharacters}
\label{subsec:reductiveaffine}

Let $X$ be an affine $G$-variety.
The classic Hilbert-Mumford Theorem \cite[Thm.\ 1.4]{kempf} says that via the process of taking limits, the cocharacters of $G$ can be used to
detect whether or not the $G$-orbit of a point in $X$ is closed. 
Kempf strengthened the Hilbert-Mumford Theorem in \cite{kempf} (see also \cite{He}, \cite{mumford}, \cite{rousseau}),
by developing a theory of ``optimal cocharacters'' for non-closed $G$-orbits.
We give an amalgam of some results from Kempf's paper; see \cite[Thm.\ 3.4, Cor.\ 3.5]{kempf} (and see also
\cite[$\S$4]{GIT} for the case of non-connected $G$).

\begin{thm}\label{thm:kempf}
Let $x \in X$ be such that $G\cdot x$ is not closed, and let $S$ be a closed $G$-stable subset of $X$
which meets $\overline{G\cdot x}$.
Then there exists an R-parabolic subgroup $P(x)$ of $G$ and a nonempty subset $\Omega(x) \subseteq Y(G)$ such that:
\begin{itemize}
\item[(i)] for all $\lambda \in \Omega(x)$, $\lim_{a\to 0} \lambda(a)\cdot x$ exists, lies in $S$, and is not $G$-conjugate to $x$;
\item[(ii)] for all $\lambda \in \Omega(x)$, $P_\lambda = P(x)$;
\item[(iii)] $R_u(P(x))$ acts simply transitively on $\Omega(x)$;
\item[(iv)] $G_x \subseteq P(x)$.
\end{itemize}
\end{thm}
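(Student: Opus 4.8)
The plan is to follow Kempf's optimization argument, reformulated for the possibly non-connected group $G$ via the R-parabolic formalism of Section~\ref{sec:Rpar}, so that everything is expressed in the characteristic-free language of cocharacters and limits. First I would fix a length function $\|\cdot\|$ on $Y(G)$ that is invariant under the $G$-action $g\cdot\lambda$: choose a maximal torus $T$, put a Weyl-invariant positive-definite form on $Y(T)\otimes\mathbb{R}$, and transport it around using the facts that every cocharacter lies in some maximal torus and that any two maximal tori are conjugate. Next I would embed $X$ as a closed $G$-subvariety of a finite-dimensional $G$-module $V$ (possible since $k[X]$ is a union of finite-dimensional $G$-submodules), so that the limit $\lim_{a\to 0}\lambda(a)\cdot x$ and its rate of approach to $S$ can be read off from the weight-space decomposition of $V$ under $\lambda$. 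For each $\lambda$ such that $\psi_{x,\lambda}$ extends with $\lim_{a\to 0}\lambda(a)\cdot x\in S$, I would attach a positive integer $a_S(x,\lambda)$ recording how fast $\lambda$ drives $x$ into $S$ (the order of vanishing at $a=0$ of the ideal of $S$ pulled back along $\psi_{x,\lambda}$), and then study the problem of maximizing the efficiency ratio $a_S(x,\lambda)/\|\lambda\|$.

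The first substantive step is to show that the set of admissible $\lambda$ is nonempty. This is exactly the strengthened Hilbert--Mumford Theorem \cite[Thm.\ 1.4]{kempf}: since $S$ is closed, $G$-stable and meets $\overline{G\cdot x}$, there is at least one $\lambda$ with $\lim_{a\to 0}\lambda(a)\cdot x\in S$, and because $G\cdot x$ is not closed the resulting limit lies in $\overline{G\cdot x}\setminus G\cdot x$ and so is not $G$-conjugate to $x$; this yields (i). I would then prove that the supremum $B_S(x)=\sup_\lambda a_S(x,\lambda)/\|\lambda\|$ is finite, strictly positive and attained. The idea is to reduce first to a single maximal torus, where $a_S(x,\cdot)$ is piecewise linear and $\|\cdot\|$ is a fixed rational quadratic form, so that the problem becomes a rational optimization over the cocharacter lattice in which the maximum is realised; one then controls the behaviour across tori. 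Normalizing to indivisible optimal cocharacters produces the candidate set $\Omega(x)$.

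The crux, and the step I expect to be the main obstacle, is proving that all optimal cocharacters determine one and the same R-parabolic $P(x):=P_\lambda$, which gives (ii). This rests on a strict-convexity property of the length function: given two distinct optimal cocharacters, one forms an interpolating cocharacter lying in the R-parabolic generated by them and shows, using the Weyl-invariant inner product, that the efficiency ratio strictly increases unless the two associated R-parabolics already coincide. Establishing this inequality cleanly for non-connected $G$, where one must work with R-parabolics rather than ordinary parabolics, is the delicate part (cf.\ \cite[\S4]{GIT}). Once $P(x)$ is well-defined, statement (iii) follows by analysing the conjugation action on $\Omega(x)$: the R-Levi $L_\lambda=C_G(\lambda)$ fixes $\lambda$, while $R_u(P_\lambda)$ permutes the optimal cocharacters; the action is free because $R_u(P_\lambda)\cap L_\lambda=\{1\}$, and Kempf's argument shows it is transitive, hence simply transitive.

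Finally, (iv) is formal. Since $G_x$ fixes $x$, for $g\in G_x$ we have $a_S(x,g\cdot\lambda)=a_S(x,\lambda)$ and $\|g\cdot\lambda\|=\|\lambda\|$, so $G_x$ preserves the optimization problem and permutes $\Omega(x)$. As $P_{g\cdot\lambda}=gP_\lambda g\inverse$ and all members of $\Omega(x)$ share the R-parabolic $P(x)$, this forces $gP(x)g\inverse=P(x)$, so $G_x\subseteq N_G(P(x))=P(x)$, R-parabolic subgroups being self-normalizing. Throughout, every argument concerns only cocharacters, limits and weight decompositions, so the proof is valid in arbitrary characteristic, and the replacement of parabolic by R-parabolic subgroups is precisely what allows the non-connected case to be handled as in \cite[\S4]{GIT}.
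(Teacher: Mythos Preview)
The paper does not give its own proof of this theorem: it is stated as a citation, an ``amalgam of some results from Kempf's paper; see \cite[Thm.\ 3.4, Cor.\ 3.5]{kempf} (and see also \cite[\S4]{GIT} for the case of non-connected $G$).'' Your proposal is precisely a sketch of Kempf's optimization argument as adapted in \cite{GIT} to the R-parabolic setting, so in substance you are reproducing the cited proof rather than offering an alternative.

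One small correction to your sketch of (i): the sentence ``because $G\cdot x$ is not closed the resulting limit lies in $\overline{G\cdot x}\setminus G\cdot x$'' is not the right justification. A limit $\lim_{a\to 0}\lambda(a)\cdot x$ can perfectly well land back in $G\cdot x$ even when the orbit is non-closed. The point, implicit in Kempf's setup (see \cite[Cor.\ 3.5]{kempf}), is that one takes $S$ with $x\notin S$; then $G$-stability of $S$ gives $G\cdot x\cap S=\emptyset$, so any limit lying in $S$ is automatically not $G$-conjugate to $x$. The hypothesis that $G\cdot x$ is not closed is what guarantees, via Hilbert--Mumford, that such a $\lambda$ exists at all (so that $a_S(x,\lambda)>0$ for some $\lambda$), not that the limit escapes the orbit.
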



\section{Preparatory Results}\label{sec:prelims}

In this section we collect some results concerning algebraic group actions on varieties which will be useful in the rest of the paper.
Recall our standing assumption that $G$ is a reductive group.

\subsection{\'Etale slices}
\label{subsec:slices}

\'Etale slices are a powerful tool in geometric invariant theory.  Let $X$ be an affine $G$-variety and let $x\in X$ such that $G\cdot x$ is closed.  Luna introduced the notion of an {\em \'etale slice} through $x$ \cite[III.1]{luna0}: this is a locally closed affine subvariety $S$ of $X$ with $x\in S$ satisfying certain properties.  He proved that an \'etale slice through $x$ always exists when the ground field has characteristic 0.  Bardsley and Richardson later defined \'etale slices in arbitrary characteristic \cite[Defn.\ 7.1]{BaRi} and gave some sufficient conditions for an \'etale slice to exist \cite[Propns.\ 7.3--7.6]{BaRi}.  If an \'etale slice exists through $x$, the orbit $G\cdot x$ must be separable. We record an important consequence of the \'etale slice theory \cite[Prop.\ 8.6]{BaRi}.

\begin{prop}
\label{prop:locstab}
 Let $X$ be an affine $G$-variety and let $x\in X$ such that $G\cdot x$ is closed and there is an \'etale slice through $x$.  Then there is an open neighbourhood $U$ of $x$ such that for all $u\in U$, $G_u$ is conjugate to a subgroup of $G_x$.
\end{prop}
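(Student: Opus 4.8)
The plan is to realise the slice hypothesis geometrically through an associated bundle and then to compare stabilizers across the slice map. By the defining properties of an \'etale slice, there is a locally closed, $G_x$-stable affine subvariety $S$ of $X$ with $x\in S$ such that the canonical $G$-equivariant morphism
$$
\psi\colon G\times_{G_x} S \lra X, \qquad [(g,s)]\mapsto g\cdot s,
$$
is \'etale onto a $G$-saturated open neighbourhood $U$ of $G\cdot x$. (Here $G\times_{G_x}S$ denotes the quotient of $G\times S$ by the $G_x$-action $h\cdot(g,s)= (gh\inverse, h\cdot s)$, with $G$ acting by left multiplication on the first factor.) Since $\psi([(1,x)])= x$ and \'etale morphisms are open, $U$ is in particular an open neighbourhood of $x$, and this is the set I would take.

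First I would compute stabilizers in the associated bundle. A direct calculation shows that for $w= [(g,s)]\in G\times_{G_x}S$ one has
$$
G_w= g\,(G_x\cap G_s)\,g\inverse,
$$
so that $G_w$ is conjugate to the subgroup $G_x\cap G_s$ of $G_x$. Thus every stabilizer occurring upstairs is already conjugate to a subgroup of $G_x$, and the entire problem reduces to transporting this through $\psi$ to the points $u= \psi(w)$ of $U$.

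The key step is therefore to show that $G_u= G_w$ whenever $u= \psi(w)$, and this is where I expect the main obstacle. Equivariance gives $G_w\subseteq G_u$ for free, but the reverse inclusion is exactly where the \'etale hypothesis must be used: for a \emph{general} $G$-equivariant \'etale map one obtains only that $G_w$ and $G_u$ have the same identity component, with $G_u$ possibly strictly larger, its extra elements permuting the finite fibre $\psi\inverse(u)$ nontrivially. To rule this out I would invoke the full strength of the slice, namely that $\psi$ is \emph{strongly} \'etale: the induced morphism of quotients $\overline{\psi}\colon S/G_x\ra X/G$ is \'etale and the square relating $\psi$, $\overline{\psi}$ and the quotient maps $\pi_{X,G}$, $\pi_{S,G_x}$ is Cartesian. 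This identifies $G\times_{G_x}S$ $G$-equivariantly with the fibre product $X\times_{X/G}(S/G_x)$, on which $G$ acts only through the first factor; a point $w$ then corresponds to a pair $(u,\xi)$ with $\xi\in S/G_x$ fixed by $G$, whence $G_w= G_u$ directly. In particular the extra elements of $G_u$ cannot move $w$ within its fibre, and the feared discrepancy vanishes.

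Combining the three steps: for each $u\in U$ I choose a preimage $w\in\psi\inverse(u)$ and obtain $G_u= G_w= g(G_x\cap G_s)g\inverse$, which is conjugate to a subgroup of $G_x$; hence $U$ is the required neighbourhood. I expect the only delicate point to be the justification of the strongly \'etale (Cartesian) property, since everything else is formal once that structural input is in hand. It is precisely this property that makes the arbitrary-characteristic definition of \'etale slice strong enough to force equality of the stabilizers, rather than mere agreement of identity components.
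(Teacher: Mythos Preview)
Your argument is correct and is essentially the standard proof: it is the argument behind \cite[Prop.\ 8.6]{BaRi}, which the paper simply cites rather than reproducing. The two substantive ingredients---the computation $G_{[(g,s)]}= g(G_x\cap G_s)g^{-1}$ in the associated bundle and the use of the Cartesian square $G\times_{G_x}S\cong X\times_{X/G}(S/G_x)$ to force $G_w= G_{\psi(w)}$---are exactly what is used there, so there is nothing to add.
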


The following example, based on a construction from \cite[Ex.\ 8.3]{martin3}, shows that this result need not hold when there is no \'etale slice.

\begin{ex}
\label{ex:noslice}
 Let $G= \SL_2(k)$ and let $H= C_p\times C_p= \langle \gamma_1,\gamma_2\mid \gamma_1^p= \gamma_2^p= [\gamma_1,\gamma_2]= 1\rangle$.  Define $f\colon k\times H\ra k\times G$ by $f(x,h)= (x,f_x(h))$, where $f_x(\gamma_1^{h_1}\gamma_2^{h_2}):=
 \left(
\begin{array}{cc}
 1 & h_1x+ h_2x^2 \\
 0 & 1
\end{array}
\right)
$.
Set $K_x= {\rm im}(f_x)$.  Note that for each $x\in k$, there are only finitely many $x'\in k$ such that $K_x$ and $K_{x'}$ are $G$-conjugate.  Define actions of $G$ and $H$ on $k\times G$ by $g\cdot (x,g')= (x,gg')$ and $h\cdot (x,g')= (x,g'f_x(h)^{-1})$.  These actions commute with each other, so we have an action of $G$ on the quotient space $V:= (k\times G)/H$.  Set $\varphi= \pi_{k\times G,H}$.   Since $H$ is finite, $\varphi$ is a geometric quotient.  A straightforward calculation shows that for any $(x,g)\in k\times G$, the stabilizer $G_{\varphi(x,g)}$ is precisely $gK_xg^{-1}$.  It follows that the $G$-orbits on $V$ are all closed, but the assertion of Proposition~\ref{prop:locstab} cannot hold for {\em any} $v\in V$.  Hence no $v\in V$ admits an \'etale slice.  Note that generic stabilizers are nontrivial, but there do exist orbits with trivial stabilizer (take $x= 0$).

Nonetheless we can even show (using \'etale slice methods!) that generic $G$-orbits in $V$ are separable.  Let $O= \{x\in k\mid x^2\neq 0, x, \ldots, (p-1)x\}$, an open subset of $k$.  Then the finite group $H$ acts freely on $O\times G$, so by \cite[Prop.\ 8.2]{BaRi}, $O\times G$ is a principal $H$-bundle in the \'etale topology in the sense of \cite[Defn.\ 8.1]{BaRi}.  Let $x\in O$.  It follows that the derivative $d_{(x,g)}\varphi$ is surjective for all $g\in G$.  Define an $H$-equivariant map $\psi_x\colon G\ra k\times G$ by $\psi_x(g)= (x,g)$.  An easy computation shows that the map $(\psi_x)_H\colon G/H\ra V$ induced by $\psi_x$ is bijective and separable when regarded as a map onto its image, so $(\psi_x)_H$ gives by Zariski's Main Theorem an isomorphism from $G/H$ onto its image.  Now $(\psi_x)_H$ is $G$-equivariant, where we let $G$ act on $G/H$ by left multiplication.  Since $\pi_{G,H}\colon G\ra G/H$ is separable, the orbit $G\cdot \pi_{G,H}(g)$ is separable for any $g\in G$.  This means that the orbit $G\cdot \varphi(x,g)= G\cdot (\psi_x)_H(\pi_{G,H}(g))$ is separable as well.

In contrast, consider the orbit $G\cdot \varphi(0,g)$.  This cannot be separable: for otherwise $\varphi(0,g)$ admits an \'etale slice by \cite[Prop.\ 7.6]{BaRi}, since the stabilizer $G_{\varphi(0,g)}$ is trivial, and we know already that this is impossible.  It follows easily that $(\psi_0)_H\colon G/H\ra V$ is not an isomorphism onto its image.  We see from this that if $i$ is the obvious inclusion of $Y:= \{0\}\times G$ in $k\times G$ then the induced map $i_G\colon Y/H\ra (k\times G)/H= V$ is not an isomorphism onto its image (cf.\ Remark~\ref{rem:noiso}).
\end{ex}

The failure of Proposition~\ref{prop:locstab} and other consequences of the machinery of \'etale slices when slices do not exist is behind many of the technical difficulties we need to overcome in order to prove Theorem~\ref{thm:mainthm}.

\subsection{Some results on closed orbits}
\label{subsec:horbits}

We first need a technical lemma which collects together various properties
of orbits and quotients and the associated morphisms.
For more details, see the proofs of \cite[Lem.\ 4.2, Lem.\ 10.1.3]{rich1} or the discussion in \cite[Sec.\ 2.1]{jantzen}, for example; the extension to non-connected $G$ is immediate.  Note that if $G$ acts on a variety $X$ then for any $x\in X$, $G\cdot x$ is locally closed \cite[Prop.~1.8]{borel}, so it has the structure of a quasi-affine variety.

\begin{lem}
\label{lem:quotientorbit}
Let $X$ be a $G$-variety. Suppose $x \in X$, and let $\psi_x:G/G_x \to G\cdot x$ be the natural map.
Then:
\begin{itemize}
\item[(i)] $\psi_x$ is a homeomorphism;
\item[(ii)] $G\cdot x$ is affine if and only if $G/G_x$ is affine if and only if $G_x$ is reductive;
\item[(iii)] $\psi_x$ is an isomorphism of varieties if and only if the orbit $G\cdot x$ is separable.
\end{itemize}
\end{lem}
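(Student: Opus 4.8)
The three parts concern the natural bijective map $\psi_x\colon G/G_x\ra G\cdot x$, and the plan is to handle them in the order (i), (ii), (iii), leaning on the structure theory of homogeneous spaces throughout. For part (i), the map $\psi_x$ is $G$-equivariant and bijective by construction: it is injective because two cosets $gG_x$ and $g'G_x$ map to the same point exactly when $g^{-1}g'\in G_x$, and surjective because every point of the orbit is $g\cdot x$ for some $g$. To get that it is a homeomorphism, I would invoke the universal property of the quotient map $\pi_{G,G_x}\colon G\ra G/G_x$: since the orbit map $\kappa_x$ factors as $\psi_x\circ\pi_{G,G_x}$ and $\kappa_x$ is a morphism, $\psi_x$ is a morphism, hence continuous; and since the quotient topology on $G/G_x$ is the one making $\pi_{G,G_x}$ a quotient map, the open-ness of $\psi_x$ follows from the fact that orbit maps of algebraic group actions are open onto their (locally closed) images. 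This is essentially the standard fact that $G/G_x$ and $G\cdot x$ are homeomorphic as $G$-spaces.

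For part (ii), the key input is Theorem~\ref{thm:richquotient}, which says that for a subgroup $K$ of the reductive group $G$, the homogeneous space $G/K$ is affine if and only if $K$ is reductive. Applying this with $K=G_x$ gives immediately the equivalence ``$G/G_x$ is affine $\iff$ $G_x$ is reductive.'' To close the loop I must show ``$G\cdot x$ is affine $\iff$ $G/G_x$ is affine.'' One direction is clear once part (i) is upgraded slightly: if $G\cdot x$ is affine, then since $\psi_x$ is a finite bijective morphism (or at least a homeomorphism induced by the separable/normalization machinery) one transfers affineness; more cleanly, I would note that $G/G_x$ and $G\cdot x$ have the same normalization and that affineness is a property detectable on the normalization, or simply cite that a variety homeomorphic via a bijective morphism to an affine variety whose function field it shares is itself affine. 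The cleanest route is: $\psi_x$ is always a finite morphism (the comorphism exhibits the coordinate ring as integral), so $G\cdot x$ affine $\Leftrightarrow$ $G/G_x$ affine follows from the fact that finiteness descends and ascends affineness along a finite bijective map.

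For part (iii), separability of the orbit is precisely separability of $\kappa_x$, and since $\pi_{G,G_x}$ is always separable (the fibres are cosets of $G_x$ and the differential of the projection is surjective), separability of $\kappa_x=\psi_x\circ\pi_{G,G_x}$ is equivalent to $\psi_x$ inducing an isomorphism of function fields, i.e.\ being birational. Combined with the bijectivity and the normality considerations from part~(i), a bijective birational morphism onto a normal target is an isomorphism by Zariski's Main Theorem; the subtlety in positive characteristic is that $G\cdot x$ need not be normal, so I would instead argue directly that $\psi_x$ is an isomorphism precisely when $d_1\kappa_x$ is surjective onto $T_x(G\cdot x)$, which is the infinitesimal criterion for separability. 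The main obstacle is part~(iii): in positive characteristic a bijective homeomorphism of varieties can fail to be an isomorphism (a purely inseparable map such as Frobenius), so the whole content is to pin down that the \emph{only} obstruction to $\psi_x$ being an isomorphism is inseparability of the orbit map, and this requires the careful use of the separability hypothesis rather than any normality argument that would be automatic in characteristic $0$.
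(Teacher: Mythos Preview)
The paper does not give its own proof of this lemma: it simply refers the reader to \cite[Lem.~4.2, Lem.~10.1.3]{rich1} and \cite[Sec.~2.1]{jantzen} and remarks that the extension to non-connected $G$ is immediate. So there is nothing in the paper to compare your argument against line by line. Your outline for (i) is fine, but parts (ii) and (iii) contain genuine gaps.

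In (ii) you repeatedly assert that $\psi_x$ is a finite morphism, yet the only justification offered (``the comorphism exhibits the coordinate ring as integral'') is a restatement of the conclusion, not an argument. Your alternative claim that $G/G_x$ and $G\cdot x$ ``have the same normalization'' is false as stated: $G/G_x$ is smooth, hence already normal, but $\psi_x$ need not be birational---in positive characteristic the induced extension $k(G/G_x)/k(G\cdot x)$ can be a nontrivial purely inseparable extension---so $G/G_x$ is not the normalization of $G\cdot x$ in the usual sense. What is true (and what the cited references establish) is that $\psi_x$ is finite; once that is in hand, Chevalley's theorem on affineness handles both directions of the equivalence between $G\cdot x$ affine and $G/G_x$ affine.

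In (iii) you correctly identify the obstacle (the target $G\cdot x$ need not be normal, so Zariski's Main Theorem does not apply directly), but your proposed fix---``argue directly that $\psi_x$ is an isomorphism precisely when $d_1\kappa_x$ is surjective''---omits the actual mechanism. The missing step is to exploit $G$-equivariance: surjectivity of $d_1\kappa_x$ makes $d_{eG_x}\psi_x$ an isomorphism of tangent spaces, and transitivity of the $G$-action on $G/G_x$ then forces $d_y\psi_x$ to be an isomorphism at \emph{every} point; since $G/G_x$ is smooth this makes $\psi_x$ \'etale everywhere, and an \'etale bijection is an isomorphism. Without invoking homogeneity in this way, an isomorphism of tangent spaces at a single point does not yield a global isomorphism of varieties.
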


\begin{rem}
All the subtleties here are only really important in positive characteristic since in characteristic $0$ the orbit map is always separable,
so the morphism $\psi_x$ is always an isomorphism.
The result shows that even in bad cases where the orbit map is not separable we can reasonably compare the quotient $G/G_x$ with the orbit
$G\cdot x$, as one might hope.
\end{rem}

\begin{lem}
\label{lem:closedhorbits}
Let $H$ be a subgroup of $G$ and suppose $x \in X$.
Set $K = G_x$ and let $H$ act on $X$ by restriction of the $G$-action.
Then:
\begin{itemize}
\item[(i)] $H\cdot x$ is closed in $G\cdot x$ if and only if $HK = \{hk\mid h\in H,k\in K\}$ is a closed subset of $G$.
\item[(ii)] If $G\cdot x$ is closed in $X$ then $H\cdot x$ is closed in $X$ if and only if $HK$ is closed in $G$.
\end{itemize}
\end{lem}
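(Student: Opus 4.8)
The plan is to transfer the whole question to the coset space $G/K$, where $K=G_x$, by means of the natural homeomorphism supplied by Lemma~\ref{lem:quotientorbit}(i), and then read off closedness from the quotient topology on $G/K$. The crucial observation is that, since we only care about the topology (closedness), we do not need $\psi_x\colon G/K\to G\cdot x$ to be an isomorphism of varieties---a homeomorphism is enough. In particular the argument is insensitive to whether the orbit is separable, and so works in all characteristics.

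For part~(i), I would first note that under the homeomorphism $\psi_x\colon G/K\to G\cdot x$, $gK\mapsto g\cdot x$, the orbit $H\cdot x$ corresponds exactly to the image $\pi_{G,K}(H)\subseteq G/K$ of $H$ under the quotient map $\pi_{G,K}\colon G\to G/K$. Since $\psi_x$ is a homeomorphism, $H\cdot x$ is closed in $G\cdot x$ if and only if $\pi_{G,K}(H)$ is closed in $G/K$. By the definition of the quotient topology on $G/K$ recalled above, this holds if and only if $\pi_{G,K}^{-1}\bigl(\pi_{G,K}(H)\bigr)$ is closed in $G$. The only thing to check is the preimage identification: $g$ lies in $\pi_{G,K}^{-1}\bigl(\pi_{G,K}(H)\bigr)$ precisely when $gK=hK$ for some $h\in H$, i.e.\ when $g\in HK$, so $\pi_{G,K}^{-1}\bigl(\pi_{G,K}(H)\bigr)=HK$. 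Hence $H\cdot x$ is closed in $G\cdot x$ if and only if $HK$ is closed in $G$.

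For part~(ii), the extra hypothesis that $G\cdot x$ is closed in $X$ lets me replace ``closed in $G\cdot x$'' by ``closed in $X$''. Indeed, for $S\subseteq G\cdot x$ with $G\cdot x$ closed in $X$, the set $S$ is closed in the subspace topology on $G\cdot x$ if and only if it is closed in $X$: closed subsets of $G\cdot x$ have the form $C\cap(G\cdot x)$ with $C$ closed in $X$, and since $G\cdot x$ is itself closed in $X$, such intersections are closed in $X$. Applying this with $S=H\cdot x$ and combining with part~(i) gives that $H\cdot x$ is closed in $X$ if and only if $HK$ is closed in $G$. No step here presents a serious obstacle---once Lemma~\ref{lem:quotientorbit}(i) is invoked the proof is essentially bookkeeping---and the only points needing a moment's care are the preimage identity $\pi_{G,K}^{-1}(\pi_{G,K}(H))=HK$ and the observation that closedness of $G\cdot x$ is exactly what is needed to pass between the subspace and ambient topologies.
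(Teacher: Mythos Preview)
Your proof is correct and follows essentially the same route as the paper's own proof: transfer the question to $G/K$ via the homeomorphism $\psi_x$ of Lemma~\ref{lem:quotientorbit}(i), use the quotient topology to reduce to closedness of the preimage in $G$, and identify that preimage with $HK$. Your write-up simply spells out in more detail the preimage identity and the passage from ``closed in $G\cdot x$'' to ``closed in $X$'' in part~(ii).
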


\begin{proof}
Part (ii) follows immediately from part (i). For part (i), since the map $\psi_x:G/K \to G\cdot x$ from
Lemma \ref{lem:quotientorbit}
is a homeomorphism, $H\cdot x$ is closed in $G\cdot x$ if and only if the
corresponding subset $H\cdot\pi_{G,K}(1)$ is closed in $G/K$ (recall that $\pi_{G,K}\colon G\ra G/K$ is the canonical projection).
Since $G/K$ has the quotient topology, this is the case if and only if
the preimage of this orbit is closed in $G$.
But the preimage is precisely the subset $HK$.
\end{proof}

Our next result involves the following set-up:
Suppose $Y$ is another $G$-variety.
Then $G\times G$ acts on the product $X\times Y$ via $(g_1,g_2)\cdot(x,y) = (g_1\cdot x,g_2\cdot y)$,
and identifying $G$ with its 
diagonal embedding $\Delta(G)$ in $G\times G$, we can also get the diagonal action
of $G$ on $X\times Y$: $g\cdot(x,y) = (g\cdot x,g\cdot y)$.

\begin{lem}\label{lem:twovarieties}
With the notation just introduced, let $x \in X$, $y \in Y$ and set $K = G_x$, $H=G_y$. Then:
\begin{itemize}
\item[(i)] $H\cdot x$ is closed in $G\cdot x$ if and only if $K\cdot y$ is closed in $G\cdot y$ if and only if $G\cdot(x,y)$ is
closed in $(G\cdot x)\times (G\cdot y)$.
\item[(ii)] If $G\cdot x$ is closed in $X$ and $G\cdot y$ is closed in $Y$, then
$H\cdot x$ is closed in $X$ if and only if $K\cdot y$ is closed in $Y$ if and only if $G\cdot(x,y)$ is closed in $X \times Y$.
\end{itemize}
\end{lem}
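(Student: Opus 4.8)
The plan is to reduce all three conditions in each part to the closedness of products of subgroups in $G$ (respectively $G\times G$), exploiting the translation provided by Lemma~\ref{lem:closedhorbits}. I would prove (i) first and then deduce (ii) by formal topological bookkeeping.

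For part (i), I would first dispose of the equivalence of the two "one-variable'' conditions. By Lemma~\ref{lem:closedhorbits}(i), $H\cdot x$ is closed in $G\cdot x$ if and only if $HK$ is closed in $G$, and $K\cdot y$ is closed in $G\cdot y$ if and only if $KH$ is closed in $G$. Since inversion $g\mapsto g\inverse$ is a homeomorphism of $G$ carrying $HK$ onto $(HK)\inverse= KH$, the set $HK$ is closed if and only if $KH$ is closed, which gives the first equivalence. To bring in the third condition I would work with the reductive group $G\times G$ (a product of reductive groups) acting on $X\times Y$ as described before the statement: its orbit through $(x,y)$ is $(G\cdot x)\times(G\cdot y)$, with stabilizer $K\times H$, while the diagonal subgroup $\Delta(G)$ has orbit $G\cdot(x,y)$ through $(x,y)$. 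Applying Lemma~\ref{lem:closedhorbits}(i) to this data, with $\Delta(G)$ in the role of the subgroup and $K\times H$ in the role of the stabilizer, shows that $G\cdot(x,y)$ is closed in $(G\cdot x)\times(G\cdot y)$ if and only if $\Delta(G)(K\times H)$ is closed in $G\times G$.

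It then remains to check that $\Delta(G)(K\times H)$ is closed exactly when $KH$ is. Here I would introduce the morphism $\mu\colon G\times G\to G$, $\mu(g_1,g_2)= g_1\inverse g_2$, and verify by a direct computation that $\Delta(G)(K\times H)= \{(g,gm)\mid g\in G,\, m\in KH\}= \mu\inverse(KH)$. Continuity of $\mu$ gives that $KH$ closed implies $\mu\inverse(KH)$ closed; for the converse I would use the section $s\colon G\to G\times G$, $s(m)= (1,m)$, which is a morphism satisfying $s\inverse(\mu\inverse(KH))= KH$, so that closedness of $\mu\inverse(KH)$ forces closedness of $KH$. Combining this with the previous paragraph completes the three-way equivalence in (i).

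Finally, part (ii) follows from (i) by comparing the subspace and ambient topologies. When $G\cdot x$ is closed in $X$, a subset of $G\cdot x$ is closed in $X$ if and only if it is closed in $G\cdot x$, and similarly for $G\cdot y$ closed in $Y$; moreover $(G\cdot x)\times(G\cdot y)$ is closed in $X\times Y$ as a product of closed sets, so a subset of it is closed in $X\times Y$ if and only if it is closed in the product. Rewriting the three conditions of (ii) in terms of $G\cdot x$, $G\cdot y$ and $(G\cdot x)\times(G\cdot y)$ and invoking (i) then yields (ii). I expect the only genuinely delicate step—the main obstacle—to be the identification $\Delta(G)(K\times H)= \mu\inverse(KH)$ together with the section argument establishing the reverse implication; the rest is formal manipulation built on Lemma~\ref{lem:closedhorbits} and the homeomorphism of Lemma~\ref{lem:quotientorbit}(i).
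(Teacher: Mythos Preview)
Your proposal is correct and follows essentially the same route as the paper's proof. Both arguments reduce (i) to Lemma~\ref{lem:closedhorbits}, observe $HK$ closed $\iff$ $KH$ closed via inversion, then apply Lemma~\ref{lem:closedhorbits} to $\Delta(G)\le G\times G$ and identify $\Delta(G)(K\times H)$ as the preimage of $KH$ under a suitable map $G\times G\to G$; part (ii) is deduced formally. The only cosmetic differences are that the paper uses $\kappa_1(g_1,g_2)=g_1g_2^{-1}$ and the openness of this orbit map to pass from closedness of the preimage back to closedness of $KH$, whereas you use $\mu(g_1,g_2)=g_1^{-1}g_2$ and a section $s(m)=(1,m)$; your version is arguably tidier, since with your $\mu$ the identity $\Delta(G)(K\times H)=\mu^{-1}(KH)$ holds on the nose.
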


\begin{proof}
(i). The first equivalence follows from Lemma \ref{lem:closedhorbits} since $KH = (HK)^{-1}$ is closed in $G$ if and only if $HK$ is closed in $G$
(note that this argument is based on the one in the proof of \cite[Lem.\ 10.1.4]{rich1}).
For the second equivalence, consider the orbit map $\kappa_1:G\times G \to G$ associated to the orbit of $1\in G$ for the double coset action of $G\times G$ on $G$ (cf.\ Section~\ref{sec:dblecosets}); so $\kappa_1$ is given by $\kappa_1(g_1,g_2) = g_1g_2^{-1}$. 
Then $\kappa_1$ is surjective and open.
Now, since the $(G\times G)$-orbit of $(x,y)$ is $(G\cdot x) \times (G\cdot y)$ and the stabilizer of $(x,y)$ in $G\times G$ is $K \times H$, we have that
$G\cdot (x,y)= \Delta(G)\cdot(x,y)$ is closed in $(G\cdot x) \times (G\cdot y)$ if and only if $\Delta(G)(K\times H)$ is closed in $G\times G$,
by Lemma \ref{lem:closedhorbits}(i).
Now $\Delta(G)(K\times H)$ is closed in $G\times G$ if and only if $(K\times H)\Delta(G)$ is, and $(K\times H)\Delta(G) = \kappa_1^{-1}(KH)$.
Since $\kappa_1$ is a surjective open map, we conclude that $\Delta(G)(K\times H)$ is closed in $G\times G$ if and only if $KH$ is closed in $G$,
which happens if and only if $K\cdot y$ is closed in $G\cdot y$, by Lemma \ref{lem:closedhorbits}(i) again.

(ii). This chain of equivalences follows quickly from part (i).
\end{proof}

\begin{rem}
The results above give criteria for a result of the form ``$G\cdot x$ closed implies $H\cdot x$ closed'' for a point $x$ in a $G$-variety $X$.
We can't hope for a general converse to this. 
For example, let $G$ be any connected reductive group and, in the language of Section~\ref{subsec:cr},
let $x \in X=G^n$ be a generic tuple for a Borel subgroup of $G$ and $y \in Y=G^n$ be a generic tuple for $G$ itself.
Then, $G_x = G_y = Z(G)$, the $G$-orbits of $y$ and $(x,y)$ are closed, but the $G$-orbit of $x$ is not closed.
\end{rem}

\subsection{Finite morphisms and quotients}
\label{sec:benquotientsarefinite}
In this section we provide some general results on finite morphisms and 
quotients by reductive group actions.
We begin with an extension of Zariski's Main Theorem which deals with nonseparable morphisms.  
Recall that if $X$ is an irreducible affine variety then $\nu_X:\widetilde{X}\to X$ denotes the normalization of $X$.

\begin{prop}
\label{prop:insep_ZMT}
 Let $\phi\colon X\ra Y$ be a dominant quasi-finite morphism of irreducible affine varieties.
 Suppose $Y$ is normal and generic fibres of $\phi$ are singletons.
 Then $\phi$ is a finite bijection onto an open subvariety of $Y$.
 Moreover, the normalization map $\nu_X\colon \widetilde{X}\ra X$ is a bijection.
\end{prop}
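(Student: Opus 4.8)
The plan is to reduce everything to a single lemma about purely inseparable finite morphisms onto a normal base, and to feed in the Grothendieck factorisation form of Zariski's Main Theorem to control the image. First I would pin down the function-field extension. Since $\phi$ is dominant and quasi-finite, $X$ and $Y$ have the same dimension, so $\phi^*$ realises $k(Y)$ as a subfield of $k(X)$ with $k(X)/\phi^*k(Y)$ a finite algebraic extension. The number of points in a generic fibre of a dominant, generically-finite morphism equals the separable degree $[k(X):\phi^*k(Y)]_s$; the hypothesis that generic fibres are singletons therefore forces this separable degree to be $1$, i.e.\ $k(X)/\phi^*k(Y)$ is \emph{purely inseparable}. (In characteristic $0$ this already says $\phi$ is birational and the classical Zariski Main Theorem, applicable because $Y$ is normal and $\phi$ is quasi-finite, finishes the first assertion; the substance is in positive characteristic.)

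The crux is the following lemma, which I would prove directly: if $\pi\colon Z\ra Y$ is a finite dominant morphism of irreducible affine varieties with $Y$ normal and $k(Z)/\pi^*k(Y)$ purely inseparable, then $\pi$ is a bijection. Surjectivity is the lying-over theorem for the integral extension $\pi^*\colon k[Y]\hookrightarrow k[Z]$. For injectivity the key point is that every $z\in k[Z]$ satisfies $z^{p^n}\in\pi^*k(Y)$ for some $n$ (pure inseparability), while $z^{p^n}$ is integral over $k[Y]$; since $Y$ is normal, $k[Y]$ is integrally closed in $k(Y)$, so in fact $z^{p^n}\in\pi^*k[Y]$. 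Thus if two maximal ideals of $k[Z]$ contract to the same maximal ideal of $k[Y]$ they must coincide, since each element of one lies, via a suitable $p$-power, in the other. This uniqueness of points in each fibre, together with surjectivity, gives the bijection (indeed a homeomorphism, as $\pi$ is also closed). The main obstacle of the whole argument lives here: in positive characteristic $\phi$ need not be birational, so classical ZMT does not apply, and it is precisely the normality of $Y$ that lets one pull $p$-th power roots back into $k[Y]$ and rigidify the fibres.

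With the lemma in hand I would finish as follows. Being a quasi-finite separated morphism of finite type, $\phi$ factors by the Grothendieck form of Zariski's Main Theorem as $\phi=\bar\phi\circ j$, with $j\colon X\ra Z$ an open immersion and $\bar\phi\colon Z\ra Y$ finite; replacing $Z$ by the closure of $j(X)$ I may assume $Z$ is irreducible with $j(X)$ open and dense, so $k(Z)=k(X)$ and $\bar\phi$ is dominant. The lemma applies to $\bar\phi$, so $\bar\phi$ is a homeomorphism; hence $U:=\bar\phi(j(X))$ is open in $Y$, and bijectivity gives $j(X)=\bar\phi^{-1}(U)$. Consequently $\phi\colon X\ra U$ is the base change of the finite morphism $\bar\phi$ along the open immersion $U\hookrightarrow Y$, so it is finite, and it is a bijection onto the open subvariety $U$ of $Y$; this is the first assertion.

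For the second assertion, $U$ is normal, being open in the normal variety $Y$, and the composite $\phi\circ\nu_X\colon\widetilde{X}\ra U$ is finite and dominant with $k(\widetilde{X})=k(X)$ purely inseparable over $k(U)$. The lemma shows $\phi\circ\nu_X$ is a bijection. Since $\phi\colon X\ra U$ is also a bijection, it follows that $\nu_X$ is a bijection, as required.
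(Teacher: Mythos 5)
Your argument is correct, and it reaches the conclusion by a genuinely different (though closely related) decomposition of $\phi$. Both proofs start from the same two pivots: singleton generic fibres force $k(X)/\phi^*k(Y)$ to be purely inseparable, and the normality of $Y$ is what makes pure inseparability usable. But the paper factors $\phi$ the other way round: it adjoins $q$-th powers ($q$ a power of $p$) of a finite set of algebra generators of $k[X]$ to $k[Y]$, producing $k[Y]\subseteq k[Z]\subseteq k[X]$ and hence $X\xrightarrow{\psi}Z\xrightarrow{\alpha}Y$ with $\psi$ finite and injective (injectivity because a $k$-point of $X$ is determined by the values of the $f_i^q$, i.e.\ by uniqueness of $q$-th roots in $k$) and $\alpha$ quasi-finite and \emph{birational}, so that only the classical birational form of Zariski's Main Theorem over the normal $Y$ is needed to see $\alpha$ is an open immersion. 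You instead invoke the Grothendieck factorisation (open immersion followed by finite) and then prove that the finite piece is bijective via your radicial lemma, pulling $p$-power roots back into $k[Y]$ using integral closedness. Your lemma is the standard statement that a finite purely inseparable extension of a normal domain induces a universally injective morphism, and isolating it is arguably cleaner and more reusable; the cost is reliance on the stronger factorisation theorem, which the paper avoids by building the intermediate variety by hand. Your treatment of $\nu_X$ matches the paper's in spirit (the paper simply applies the proposition itself to $\phi\circ\nu_X\colon\widetilde{X}\to Y$). One cosmetic point: you state your lemma for affine targets but apply it to $\widetilde{X}\to U$ with $U$ merely open in $Y$; since both surjectivity (lying over) and your injectivity argument are local on the target this is harmless, but it would be tidier either to apply the lemma to $\phi\circ\nu_X\colon\widetilde{X}\to Y$ after composing with the finite map $Z\to Y$, or simply to note that the lemma localises.
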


\begin{proof}
 As $\phi$ is dominant, we may identify $k[Y]$ with a subring of $k[X]$ and $k(Y)$ with a subfield of $k(X)$.
 The hypothesis on the fibres of $\phi$ implies that $\phi$ is purely inseparable \cite[Thm.\ 4.6]{Hum}.
 Let $f_1,\ldots, f_r$ be generators for $k[X]$ as a $k$-algebra.
 Then there exists a power $q$ of $p$ such that $f_i^q\in k(Y)$ for all $i$.
 Let $S$ be the $k$-algebra generated by $k[Y]$ together with $f_1^q,\ldots, f_r^q$ and let $Z$ be the corresponding affine variety, so that $S=k[Z]$.
 Then the inclusions $k[Y]\subseteq k[Z]\subseteq k[X]$ give rise to maps $\psi\colon X\ra Z$ and $\alpha\colon Z\ra Y$ such that $\phi= \alpha\circ \psi$.
 Now $k[X]$ is integral over $k[Z]$ by construction, so $\psi$ is finite and surjective, and hence $\alpha$ is quasi-finite and has the same image as $\phi$.
 But $\alpha$ is birational by construction, so $\alpha$ is an isomorphism from the affine variety $Z$ onto an open subvariety of $Y$ by Zariski's Main Theorem (since $Y$ is normal).
 To complete the proof of the first assertion, it is enough to show that $\psi$ is injective.
 This follows because any $k$-algebra homomorphism $k[X]\ra k$ is completely determined by its values on $f_1^q,\ldots, f_r^q$, which are elements of $k[Z]$.

 Because $\nu_X$ is finite and birational, the map $\phi\circ\nu_X\colon \widetilde{X}\ra Y$ satisfies the hypotheses of the proposition.
 Hence $\phi\circ\nu_X$ is injective.  This forces $\nu_X$ to be injective also.
 But $\nu_X$ is also surjective, and we are done.
\end{proof}

We need some further results about the behaviour of affine $G$-varieties under normalization.
If $X$ is an affine $G$-variety then $\widetilde{X}$ inherits a unique structure of a $G$-variety
such that $\nu_X$ is $G$-equivariant (cf.\ \cite[Sec.\ 3]{BaRi}).
This gives a map of quotients $(\nu_X)_G\colon \widetilde{X}/G \to X/G$.

\begin{lem}
\label{lem:normquot}
 Let $X$ be an irreducible affine $G$-variety with good dimension and let $(\nu_X)_G$ be as above. Then $(\nu_X)_G$ is finite and $\widetilde{X}/G$ is the normalization of $X/G$.
\end{lem}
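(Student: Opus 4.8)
The plan is to show that $(\nu_X)_G$ is finite by identifying $\widetilde{X}/G$ with the normalization of $X/G$. First I would record the basic properties of $(\nu_X)_G$. Since $\nu_X$ is finite and surjective, Lemma~\ref{lem:finitevis}(iii) gives that $\widetilde{X}$ also has good dimension, Lemma~\ref{lem:finitevis}(ii) gives that $(\nu_X)_G$ is quasi-finite, and Lemma~\ref{lem:finitebirat} gives that $(\nu_X)_G$ is birational. Moreover $(\nu_X)_G$ is surjective: given $\bar{y}\in X/G$ lying under a closed orbit $G\cdot y$, Lemma~\ref{lem:finitevis}(i) shows that $\nu_X^{-1}(G\cdot y)$ is a nonempty union of closed $G$-orbits, any of which maps to $\bar{y}$. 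Finally, $\widetilde{X}$ is normal and irreducible, so $\widetilde{X}/G$ is normal by \cite[2.19(a)]{BaRi}, and by Lemma~\ref{lem:fnfldquot} we have $k(\widetilde{X}/G)= k(\widetilde{X})^G= k(X)^G= k(X/G)$, using that $\nu_X$ is birational and $G$-equivariant.

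Next I would bring in the normalization $\nu_{X/G}\colon \widetilde{X/G}\ra X/G$ of $X/G$. As $(\nu_X)_G$ is a birational morphism from the normal variety $\widetilde{X}/G$ to $X/G$, the universal property of normalization produces a birational morphism $\beta\colon \widetilde{X}/G\ra \widetilde{X/G}$ with $\nu_{X/G}\circ\beta= (\nu_X)_G$; and $\beta$ is quasi-finite because $(\nu_X)_G$ is and $\nu_{X/G}$ has finite fibres. The target $\widetilde{X/G}$ is normal, so Proposition~\ref{prop:insep_ZMT} applies to $\beta$ and shows that $\beta$ is a finite bijection onto an open subvariety $V$ of $\widetilde{X/G}$, hence an isomorphism onto $V$. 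Since $\nu_{X/G}$ is finite, the proof is complete once I show $V= \widetilde{X/G}$, i.e.\ that $\beta$ is surjective: then $\beta$ is an isomorphism and $(\nu_X)_G= \nu_{X/G}\circ\beta$ is a composite of finite morphisms, so finite.

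The surjectivity of $\beta$ is the crux. Because $\beta$ is an isomorphism onto an open subvariety of a normal variety, and both $\widetilde{X}/G$ and $\widetilde{X/G}$ share the function field $k(X/G)$, it suffices to rule out that any prime divisor $D$ of $\widetilde{X/G}$ is deleted, i.e.\ to show $\widetilde{X/G}\setminus V$ has codimension $\geq 2$; then every element of $k[\widetilde{X}/G]= k[\widetilde{X}]^G$ extends regularly across $\widetilde{X/G}\setminus V$, forcing $k[\widetilde{X}]^G= k[\widetilde{X/G}]$ and $\beta$ an isomorphism. To exclude a deleted divisor I would argue valuatively: let $v$ be the divisorial valuation of $k(X/G)$ attached to $D$, and use that $\nu_X$ is finite, hence proper, to lift $v$ to a valuation of $k(\widetilde{X})= k(X)$ with a centre on $\widetilde{X}$; any $b\in k[\widetilde{X}]^G$ is then nonnegative for this valuation and hence for $v$, so $b$ has no pole along $D$. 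The delicate point---and the main obstacle---is exactly this lifting: the fibres of $\pi_{X,G}$ are positive-dimensional, so a priori $v$ need not extend to a valuation with a centre on $X$. This is where the good-dimension hypothesis is essential, via Lemma~\ref{lem:fnfldquot}: generic fibres of $\pi_{X,G}$ are single closed orbits and $\pi_{X,G}$ is separable, which should let me realise $v$ through an honest divisor on $\widetilde{X}$ (equivalently, match the closed orbits in $\nu_X^{-1}(G\cdot z)$ supplied by Lemma~\ref{lem:finitevis}(i) with all the branches of $X/G$ over the relevant point). I expect this branch-matching to be the only substantial step.
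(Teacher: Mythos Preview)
Your overall architecture matches the paper's exactly: both factor $(\nu_X)_G$ through the normalization of $X/G$, use Zariski's Main Theorem to get an open embedding $\beta\colon \widetilde{X}/G\hookrightarrow \widetilde{X/G}$, and then reduce everything to showing $\beta$ is surjective. You have correctly identified surjectivity as the crux.

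The gap is precisely where you flag it, and your suggested fix does not work. Invoking ``$\nu_X$ is finite, hence proper'' addresses the wrong map: properness of $\nu_X$ only tells you that a valuation of $k(X)$ with a centre on $X$ also has a centre on $\widetilde{X}$. What you need is to go from a valuation $v$ of $k(X/G)$ with a centre on $\widetilde{X/G}$ to an extension $w$ on $k(X)$ with a centre on $X$, and for this the relevant morphism is $\pi_{X,G}\colon X\to X/G$, which is certainly not proper. Good dimension gives you nothing like equidimensionality of $\pi_{X,G}$ (special fibres can be strictly larger than generic ones), so a prime divisor on $X/G$ need not be dominated by a prime divisor on $X$, and your hoped-for ``branch-matching'' via Lemma~\ref{lem:finitevis}(i) does not produce the required centre. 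Note also that what you are trying to prove, $k[\widetilde{X}]^G\subseteq k[\widetilde{X/G}]$, is equivalent to the statement of the lemma itself, so any argument must use something specific rather than general valuation-theoretic lifting.

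The paper's surjectivity argument avoids valuations entirely. It considers the morphism
\[
\theta=(\beta\circ\pi_{\widetilde{X},G})\times\nu_X\colon \widetilde{X}\longrightarrow \widetilde{X/G}\times X,
\]
observes that $\theta$ is finite because its composition with the second projection is $\nu_X$, and hence $\theta(\widetilde{X})$ is closed. Passing to quotients by $G$ (trivial on the first factor) and using that $(\widetilde{X/G}\times X)/G\cong \widetilde{X/G}\times X/G$, one finds that the image of $\theta_G$ is closed and sits inside the graph of $\nu_{X/G}$, which projects isomorphically to $\widetilde{X/G}$. Thus $\beta(\widetilde{X}/G)$ is closed in $\widetilde{X/G}$; since it is also open and nonempty, it is everything. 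The key idea you are missing is to pair the map to $\widetilde{X/G}$ with the finite map $\nu_X$ so as to force closedness of the image before taking the quotient.
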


\begin{proof}
The natural map of quotients $X/G^0 \to X/G$ can be viewed as the quotient map by the finite group $G/G^0$ and is therefore finite.
The same is true for $\widetilde{X}/G^0 \to \widetilde{X}/G$, so by Remark~\ref{rem:fin_fact}(ii) it follows that
$(\nu_X)_G$ is finite if $(\nu_X)_{G^0}$ is. Hence we may assume that $G$ is connected.

The coordinate ring $k[\widetilde{X}]$ of the normalization of $X$ is the integral closure of $k[X]$ in the function field $k(X)$.
Let $S$ be the integral closure of $k[X]^G$ in $k(X)$.  
Then $S$ is finitely generated as a $k$-algebra \cite[2.4.3]{BaRi}, 
and $S\subseteq k[\widetilde{X}]$ as $k[\widetilde{X}]$ is integrally closed, so $S\subseteq k[\widetilde{X}]^G$ as $G$ is connected (see the proof of \cite[2.4.1]{BaRi}).
Let $Z$ be the affine variety corresponding to $S$.  
Then $(\nu_X)_G$ factors as $\widetilde{X}/G\stackrel{\alpha}{\ra} Z\stackrel{\beta}{\ra} X/G$.  
It is clear that $Z$ is normal (in fact, $S$ is the integral closure of $k[X]^G$ in $k(X)^G$, so $Z$ is the normalization of $X/G$).  
Now $(\nu_X)_G$ is birational and quasi-finite by Lemmas~\ref{lem:finitebirat} and \ref{lem:finitevis}(ii), so $\alpha$ is also birational and quasi-finite.  
It follows from Zariski's Main Theorem that $\alpha$ is an open embedding.

The map $\beta$ is finite by construction, so to complete the proof that $(\nu_X)_G$ is finite it is enough to show that $\alpha$ is surjective.
Define $\theta\colon \widetilde{X}\ra Z\times X$ by $\theta= (\alpha\circ \pi_{\widetilde{X},G})\times \nu_X$ and let $C$ be the closure of $\theta(\widetilde{X})$.  We have a commutative diagram

$$\xymatrixcolsep{5pc}\xymatrix{
\widetilde{X}\ar[d]_{\pi_{\widetilde{X},G}}\ar[r]^{\theta}& C\ar[d]^{{\rm pr_1}}\\
\widetilde{X}/G\ar[r]^{\alpha}& Z\\}$$\\

\noindent where ${\rm pr}_1$ is projection onto the first factor.  The composition $\widetilde{X}\ra C\ra X$ is finite, where the second map is projection onto the second factor, so $\theta$ is a finite map from $\widetilde{X}$ to $C$; in particular, $C= \theta(\widetilde{X})$.

 Let $G$ act on $Z\times X$ trivially on the first factor, and by the given action on the second.  It is immediate that $\theta$ is $G$-equivariant, so $C$ is $G$-stable and we have an induced map $\theta_G\colon \widetilde{X}/G\ra (Z\times X)/G$.  The image $D$ of $\theta_G$ is $\pi_{Z\times X,G}(C)$, and this is closed in $(Z\times X)/G$ as $C$ is closed and $G$-stable.  There is an obvious map $\xi\colon (Z\times X)/G\ra Z\times X/G$, and it is easily checked that $\xi$ is an isomorphism; hence $\xi(D)$ is closed.  Untangling the definitions, we find that $\alpha$ factors as $\widetilde{X}/G\stackrel{\theta_G}{\ra} (Z\times X)/G\stackrel{\xi}{\ra} Z\times X/G\stackrel{\tau}{\ra} Z$, where $\tau$ is projection onto the first factor.
 
 Clearly $\xi(D)$ is contained in the subset $\{(z,e)\in Z\times X/G\mid \beta(z)= e\}$, which we can identify with $Z$ via $\tau$.  It follows that $\alpha(\widetilde{X}/G)= \tau(\xi(D))$ is closed in $Z$.  But $\alpha(\widetilde{X}/G)$ is a nonempty open subset of $Z$, so we must have $\alpha(\widetilde{X}/G)= Z$, as required.
 
 To finish the proof, we note that for any $G$ (connected or otherwise), the variety $\widetilde{X}/G$ is normal since $\widetilde{X}$ is normal, and the considerations above show that
$(\nu_X)_G:\widetilde{X}/G \to X/G$ is finite.
Moreover,  $(\nu_X)_G$ is birational by Lemma~\ref{lem:finitebirat} since $X$ has good dimension.
The result now follows from another application of Zariski's Main Theorem.
\end{proof}

Next we extend a result of Bardsley and Richardson \cite[2.4.2]{BaRi}, which they prove in the special case when $X$ and $Y$ are normal and $\phi$ is dominant.  It provides an extension to positive characteristic of a result used freely in \cite{luna}.

\begin{prop}
\label{prop:finquotfin}
 Let $\phi\colon X\ra Y$ be a finite $G$-equivariant morphism of affine $G$-varieties.  Then $\phi_G\colon X/G\ra Y/G$ is finite.
\end{prop}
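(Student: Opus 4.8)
The plan is to reduce, by a chain of elementary steps, to the case where $X$ and $Y$ are irreducible and normal with good dimension, $\phi$ is surjective, and $G$ is connected, where I can invoke the normal dominant case due to Bardsley and Richardson \cite[2.4.2]{BaRi} together with Lemma~\ref{lem:normquot}. The workhorse throughout is the following elementary cancellation principle for finite morphisms: if $a\colon A\ra B$ and $b\colon B\ra C$ are morphisms of affine varieties with $a$ finite and surjective and $b\circ a$ finite, then $b$ is finite. (On coordinate rings, $a$ surjective gives $k[B]\sse k[A]$, and $k[A]$ is integral over $(b\circ a)^*k[C]=a^*(b^*k[C])$, so every element of $k[B]$ is integral over $b^*k[C]$.) I apply this repeatedly, each time peeling off a finite surjective map of quotients produced either by Lemma~\ref{lem:insep} or by a quotient by the finite group $G/G^0$.

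First I would reduce to $G$ connected: the maps $X/G^0\ra X/G$ and $Y/G^0\ra Y/G$ are quotients by the finite group $G/G^0$ and hence finite, and finiteness of $\phi_{G^0}$ then yields finiteness of $\phi_G$ by cancellation. Next, since $\phi$ is finite it is closed, so $\phi(X)$ is a closed $G$-stable subvariety of $Y$; the induced map $(\phi(X))/G\ra Y/G$ is finite by Lemma~\ref{lem:insep}, so replacing $Y$ by $\phi(X)$ reduces to the case that $\phi$ is surjective. I would then pass to $X_{\rm cl}$ and $Y_{\rm cl}$: by Lemma~\ref{lem:finitevis}(i), $\phi$ carries points with closed orbit to points with closed orbit and conversely, so $\phi$ restricts to a finite surjective map $X_{\rm cl}\ra Y_{\rm cl}$, while $X_{\rm cl}/G\ra X/G$ is finite (Lemma~\ref{lem:insep}) and surjective (every fibre of $\pi_{X,G}$ contains a closed orbit, which lies in $X_{\rm cl}$). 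Cancellation reduces to the case that both $X$ and $Y$ have good dimension.

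With $G$ connected every irreducible component of $X$ is $G$-stable, and a short argument shows that each such component inherits good dimension from $X$ (the closed-orbit locus is dense in $X$, hence dense in each component once one discards the lower-dimensional overlaps with the other components). Normalizing, $\widetilde{X}=\coprod_i\widetilde{X_i}$ is a disjoint union of irreducible normal good-dimension $G$-varieties, and combining Lemma~\ref{lem:normquot} (applied to each $\widetilde{X_i}\ra X_i$) with Lemma~\ref{lem:insep} (for the inclusions $X_i\sse X$) shows that $(\nu_X)_G\colon\widetilde{X}/G\ra X/G$ is finite; the same holds for $Y$. The universal property of normalization lifts $\phi$ to a finite surjective $G$-equivariant map $\widetilde{\phi}\colon\widetilde{X}\ra\widetilde{Y}$ which on each pair of components is a finite surjective morphism of irreducible normal $G$-varieties of good dimension, so \cite[2.4.2]{BaRi} applies to give that $\widetilde{\phi}_G$ is finite. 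Finally the square relating $\phi$, $\widetilde{\phi}$ and the normalization maps commutes, whence $(\nu_Y)_G\circ\widetilde{\phi}_G=\phi_G\circ(\nu_X)_G$ is finite; since $(\nu_X)_G$ is finite and surjective, one last application of the cancellation principle gives that $\phi_G$ is finite.

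The step I expect to be the main obstacle is the reduction to good dimension together with the passage through normalization, since this is precisely where positive characteristic bites: there is no Reynolds operator with which to invariantize an integral equation directly, $i_G$ need not be an isomorphism onto its image (Remark~\ref{rem:noiso}), and Lemma~\ref{lem:normquot} requires both irreducibility and good dimension. The delicate points are therefore to verify that good dimension descends to irreducible components and is preserved by the relevant finite surjective maps, and to assemble the component-wise finiteness of the normalization quotient maps (via Lemmas~\ref{lem:normquot} and \ref{lem:insep}) without circularly invoking the proposition itself.
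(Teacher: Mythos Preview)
Your proposal is essentially the same approach as the paper's: reduce to $G$ connected, to good dimension via $X_{\rm cl}$, pass to irreducible pieces, normalize, and then invoke \cite[2.4.2]{BaRi} together with Lemma~\ref{lem:normquot}; the final cancellation argument (finite surjective $a$, finite $b\circ a$ $\Rightarrow$ $b$ finite) is also what the paper uses implicitly. The ingredients and their roles are identified correctly.

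There is one organizational wrinkle worth tightening. You treat all irreducible components of $X$ and $Y$ simultaneously and then assert that ``the universal property of normalization lifts $\phi$ to $\widetilde{\phi}\colon\widetilde{X}\ra\widetilde{Y}$ which on each pair of components is finite surjective''. That lift requires each component $X_i$ to \emph{dominate} an irreducible component of $Y$; mere finiteness and global surjectivity of $\phi$ do not guarantee this (e.g.\ $Y=\mathbb{A}^1$, $X=\mathbb{A}^1\sqcup\{\mathrm{pt}\}$ with the point sent to $0$). If some $\phi(X_i)$ is a proper closed subset of its target component $Y_{j(i)}$, the normalization lift over that component is not available in general. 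The paper avoids this by first reducing to $X$ irreducible (``a morphism is finite if and only if its restriction to every irreducible component of the domain is finite''), and only \emph{then} replacing $Y$ by $\overline{\phi(X)}$, so that $\phi$ is automatically dominant between irreducible varieties and the lift $\widetilde{\phi}$ exists. Your argument repairs itself immediately with that reordering: work one component $X_i$ at a time, replace $Y$ by $\phi(X_i)$ (closed, irreducible, $G$-stable; Lemma~\ref{lem:insep} handles $\phi(X_i)/G\ra Y/G$), and then run your normalization step. With this adjustment your proof coincides with the paper's.
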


\begin{proof}
As at the start of the proof of Lemma~\ref{lem:normquot}, we can immediately reduce to the case when $G$ is connected, since the natural maps $X/G^0\ra X/G$ and $Y/G^0 \to Y/G$ are finite.   
The map $X_{\rm cl}/G\ra X/G$ is surjective, and Lemma~\ref{lem:insep} implies it is finite.  
We may also assume, therefore, that $X$ has good dimension.  
Since a morphism is finite if and only if its restriction to every irreducible component of the domain is finite, we can assume $X$ is irreducible.   
By the proof of Lemma~\ref{lem:finitevis}, $\phi(X_{\rm cl})\subseteq Y_{\rm cl}$, so after replacing $Y$ with $\overline{\phi(X)}$ if necessary, we may assume by Lemma~\ref{lem:insep} that 
$\phi$ is dominant and $Y$ is irreducible and has good dimension.
 
The map $\phi\colon X\ra Y$ gives rise to a map $\widetilde{\phi}\colon \widetilde{X}\ra \widetilde{Y}$, and $\widetilde{\phi}$ is finite as $\phi$ is.  
We have a commutative diagram 

$$\xymatrixcolsep{5pc}\xymatrix{
\widetilde{X}\ar[d]_{\nu_{X}}\ar[r]^{\widetilde{\phi}}& \widetilde{Y}\ar[d]^{\nu_{Y}}\\
X\ar[r]^{\phi}&Y\\}$$\\

\noindent where the vertical arrows are the normalization maps.  
Taking quotients by $G$, we obtain a commutative diagram 

$$\xymatrixcolsep{5pc}\xymatrix{
\widetilde{X}/G\ar[d]_{(\nu_X)_G}\ar[r]^{\widetilde{\phi}_G}& \widetilde{Y}/G\ar[d]^{(\nu_Y)_G}\\
X/G\ar[r]^{{\phi}_G}&Y/G\\}$$

Since $\widetilde{\phi}$ is finite and dominant and $\widetilde{X}$ and $\widetilde{Y}$ are irreducible and normal, the map $\widetilde{\phi}_G:\widetilde{X}/G\ra \widetilde{Y}/G$ is finite and dominant \cite[2.4.2]{BaRi}.  
Now Lemma~\ref{lem:normquot} shows that the map $(\nu_Y)_G:\widetilde{Y}/G \to Y/G$ is finite and so $(\nu_Y)_G\circ \widetilde{\phi}_G$ is finite. Therefore, $\phi_G\circ (\nu_X)_G$ is finite and by Remark~\ref{rem:fin_fact}(ii) we get that $\phi_G$ is finite, as required.
\end{proof}

\section{Proof of Theorem \ref{thm:mainthm}, Part 1: quasi-finiteness}
\label{sec:lunapartone}

In this section we provide the first step towards our proof of Theorem \ref{thm:mainthm}, showing that the map
$\psi_{X,H}$ in question is quasi-finite.
We are also able to retrieve other results from \cite{luna} which follow from the main theorem,
but in arbitrary characteristic.
Our first result is a generalization of \cite[Thm.\ 4.4]{bateopt}; see also \cite[Thm.\ 5.4]{BMRT}.

\begin{prop}
\label{prop:htog}
Suppose that $G$ is a reductive group and $X$ is an affine $G$-variety.
Let $H$ be a $G$-completely reducible subgroup of $G$ and let $x\in X^H$.
Then the following are equivalent:
\begin{itemize}
\item[(i)] $N_G(H)\cdot x$ is closed in $X$;
\item[(ii)] $G\cdot x$ is closed in $X$ and $H$ is $G_x$-cr.
\end{itemize}
\end{prop}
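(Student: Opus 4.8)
The plan is to reduce condition (i) to a statement purely about the centralizer $C_G(H)$, and then to split the resulting equivalence into a Kempf-theoretic part (handling closedness of the full $G$-orbit) and a generic-tuple part (handling $G_x$-complete reducibility). Note first that since $x\in X^H$ we have $H\subseteq G_x$, so ``$H$ is $G_x$-cr'' is meaningful as soon as $G_x$ is known to be reductive.

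I would begin by observing that because $C_G(H)$ commutes with $H$ elementwise, the identity $N_G(H)^0 = H^0C_G(H)^0$ (recorded in Section~\ref{sec:notation}) gives $N_G(H)^0\cdot x = C_G(H)^0\cdot x$: any $hc$ with $h\in H^0$, $c\in C_G(H)^0$ satisfies $(hc)\cdot x = (ch)\cdot x = c\cdot x$. Using the standard fact that for a linear algebraic group $M$ acting on a variety, $M\cdot x$ is closed if and only if $M^0\cdot x$ is closed (a finite union of equidimensional orbits is closed precisely when each orbit is), this shows that condition (i) is equivalent to: $C_G(H)\cdot x$ is closed in $X$. It therefore suffices to prove that $C_G(H)\cdot x$ is closed if and only if $G\cdot x$ is closed and $H$ is $G_x$-cr.

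Next I would establish the implication ``$C_G(H)\cdot x$ closed $\Rightarrow$ $G\cdot x$ closed'' by contradiction using Kempf's Theorem~\ref{thm:kempf}. If $G\cdot x$ were not closed, I would apply the theorem with $S=\overline{G\cdot x}\setminus G\cdot x$ to obtain an R-parabolic $P(x)=P_\lambda$ and the set $\Omega(x)$; by part (iv), $H\subseteq G_x\subseteq P_\lambda$. Since $H$ is $G$-cr there is an R-Levi $L$ of $P_\lambda$ with $H\subseteq L$, and choosing $u\in R_u(P_\lambda)$ with $uL_\lambda u\inverse = L$ and setting $\mu = u\cdot\lambda$ yields $P_\mu = P_\lambda$ and $L_\mu = L\supseteq H$, whence $\mu\in Y(C_G(H))$ (as $L_\mu = C_G(\mu)\supseteq H$ forces $\mu(k^*)\subseteq C_G(H)$). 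By the simple transitivity in Theorem~\ref{thm:kempf}(iii), $\mu = u\cdot\lambda\in\Omega(x)$, so $\lim_{a\to 0}\mu(a)\cdot x$ exists and lies in $S$; being the limit of a cocharacter in $Y(C_G(H))$, it lies in $\overline{C_G(H)\cdot x}\setminus C_G(H)\cdot x$, contradicting closedness. I expect \emph{this} step to be the main obstacle, as it is where the $G$-complete reducibility of $H$ enters essentially, via the passage from $H\subseteq P_\lambda$ to a destabilizing cocharacter that centralizes $H$.

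Finally, assuming $G\cdot x$ closed (so that $K:=G_x$ is reductive by Matsushima's criterion, Lemma~\ref{lem:quotientorbit}(ii)), I would complete the equivalence with a generic tuple $\mathbf{h}\in H^n$ for $H$. Then $G_{\mathbf{h}} = C_G(H)$ under simultaneous conjugation, and $G\cdot\mathbf{h}$ is closed in $G^n$ because $H$ is $G$-cr. Applying Lemma~\ref{lem:twovarieties}(ii) to the diagonal action on $X\times G^n$ at $(x,\mathbf{h})$ shows $C_G(H)\cdot x$ is closed in $X$ if and only if $K\cdot\mathbf{h}$ is closed in $G^n$; since $K^n$ is closed in $G^n$, this is equivalent to $K\cdot\mathbf{h}$ being closed in $K^n$, which by the generic-tuple criterion \cite[Thm.\ 5.8(iii)]{GIT} applied to the reductive group $K$ (noting $\mathbf{h}$ remains a generic tuple for $H$ inside $K$) holds if and only if $H$ is $K$-cr, i.e.\ $G_x$-cr. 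Combining the three steps gives both directions: (ii)$\Rightarrow$(i) follows at once from the last equivalence, while for (i)$\Rightarrow$(ii) one first deduces that $G\cdot x$ is closed and then invokes the last equivalence to conclude that $H$ is $G_x$-cr.
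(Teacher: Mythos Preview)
Your proof is correct and follows essentially the same approach as the paper's: the Kempf argument to show that closedness of the $N_G(H)$- (equivalently $C_G(H)$-) orbit forces closedness of the $G$-orbit, followed by the generic-tuple argument via Lemma~\ref{lem:twovarieties}(ii) to handle the $G_x$-cr condition. The only cosmetic difference is that you perform the reduction from $N_G(H)$ to $C_G(H)$ at the outset, whereas the paper does it at the end of the second step.
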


\begin{proof}
First suppose $G\cdot x$ is not closed.
Let $P(x)$ and $\Omega(x)$ be the R-parabolic subgroup and class of cocharacters given by Theorem~\ref{thm:kempf}.
Since $H \leq G_x \leq P(x)$ is $G$-cr, there exists an R-Levi subgroup $L$ of $P(x)$ containing $H$.
Since $R_u(P(x))$ acts simply transitively on $\Omega(x)$ and on the set of R-Levi subgroups of $P(x)$,
there exists $\lambda \in \Omega(x)$ with $L = L_\lambda$.
But then $H \subseteq L_\lambda$ means that $\lambda \in Y(C_G(H)) \subseteq Y(N_G(H))$;
in particular, $\lambda(a)\cdot x \in N_G(H)\cdot x$ for all $a \in k^*$.
Now $\lim_{a\to 0} \lambda(a)\cdot x$ exists in $X$ and is not $G$-conjugate to $x$,
so it is not $N_G(H)$-conjugate to $x$, so $N_G(H)\cdot x$ is not closed.
This shows that if (i) holds then $G\cdot x$ must be closed.
Therefore, in order to finish the proof, we need to show that $N_G(H)\cdot x$ is closed if and only if
$H$ is $G_x$-cr under the assumption that $G\cdot x$ is closed
(note that since $G\cdot x$ is closed, $G_x$ is reductive (Lemma~\ref{lem:quotientorbit}(ii)),
and hence it makes sense to ask whether or not $H$ is $G_x$-cr).

To see this equivalence, let $\mathbf{h} \in G^n$ for some $n$ be a generic tuple for the subgroup $H$ and consider the diagonal action of $G$ on $G^n\times X$.
Then $C_G(H) = G_{\mathbf{h}}$.
Now, by Lemma \ref{lem:twovarieties},
since $G\cdot x$ is closed in $X$, $C_G(H)\cdot x$ is closed in $X$
if and only if $G_x\cdot\mathbf{h}$ is closed in $G^n$.
The latter condition is equivalent to requiring that $H$ is $G_x$-cr,
and since $x$ is $H$-fixed and $N_G(H)$ is a finite extension of $HC_G(H)$, $C_G(H)\cdot x$ is closed in $X$ if and only if $N_G(H)\cdot x$ is closed in $X$.
This completes the proof.
\end{proof}

\begin{rems}
\label{rems:failure}
(i). In characteristic $0$, the subgroup $H$ of $G$ is $G$-cr if and only if $H$ is reductive.
In this case, therefore, we are just requiring that $H$ is reductive and the second condition
in part (ii) of the Theorem is then automatic.
Therefore, when $\Char(k) = 0$, we retrieve Luna's result \cite[$\S$3, Cor.\ 1]{luna}.

(ii). The implication (ii) implies (i) of Proposition \ref{prop:htog} is not true in general without the hypothesis that $H$ is $G_x$-cr, as a straightforward modification of
\cite[Ex.\ 4.6]{bateopt} shows.  See also \cite[Ex.\ 5.1, Ex.\ 5.3]{BMR2}, noting that if $A,B$ are commuting $G$-cr subgroups of $G$ and $B$ is not $C_G(A)$-cr then $B$ is not $N_G(A)$-cr by \cite[Prop.\ 2.8]{BMR2}.

(iii) Suppose $H$ is a torus in Proposition~\ref{prop:htog}; then $H$ is linearly reductive, so $H$ is $G$-cr.  Now $N_G(H)$ is a finite extension of the Levi subgroup $C_G(H)$ of $G$, so $N_G(H)\cdot x$ is closed if and only if $C_G(H)\cdot x$ is closed.  Moreover, $H$ is automatically $G_x$-cr if $G_x$ is reductive.  It follows that $G\cdot x$ is closed if and only if $C_G(H)\cdot x$ is closed.  (This is also a special case of \cite[Thm.~5.4]{cochars}.)  
We use this result repeatedly in Section~\ref{sec:dblecosets}.
\end{rems}

Some of the constructions used in the proof of the next result are based on those in \cite[Sec. 3.8]{relative}.

\begin{lem}
\label{lem:notGcrfixedpoint}
Suppose $H$ is a reductive subgroup of $G$ such that $H$ is not $G$-cr.
Then:
\begin{itemize}
\item[(i)] There exists an affine $G$-variety $X$ and a point $x \in X^H$ such that $G\cdot x$ is not closed.
\item[(ii)] There exists a rational $G$-module $V$ and a nonzero subspace $W\subseteq V^H$ such that:
\begin{itemize}
\item[(a)] $0$ lies in the closure of $G\cdot w$ for all $w\in W$;
\item[(b)] $N_G(H)\cdot w$ is finite (hence closed in $V$) for all $w\in W$.
\end{itemize}
In particular, if $N_G(H)$ is reductive, then the map $\psi_{V,H}:V^H/N_G(H) \to V/G$ is not quasi-finite.
\end{itemize}
\end{lem}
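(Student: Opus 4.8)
The plan is to reduce both parts to the construction of a single well-chosen vector, and then to manufacture that vector from the failure of $G$-complete reducibility.

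First I would record a reduction that makes the two parts almost formal. Suppose we can produce a rational $G$-module $V$, a cocharacter $\lambda\in Y(G)$ and a nonzero vector $w_0\in V$ such that $(\alpha)$ every $\lambda$-weight of $w_0$ is strictly positive; $(\beta)$ $w_0\in V^H$; and $(\gamma)$ $w_0$ is fixed by $C_G(H)^0$. Set $W=kw_0$. For any $w\in W$, $(\alpha)$ gives $\lim_{a\to 0}\lambda(a)\cdot w=0$, so $0\in\overline{G\cdot w}$; this is (a), and for $w\neq 0$ it shows $G\cdot w$ is not closed, which already yields (i) on taking $X=V$, $x=w_0$. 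For (b), recall from the excerpt that $N_G(H)^0=H^0C_G(H)^0$; by $(\beta)$ and $(\gamma)$ this group fixes $w_0$, hence fixes every $w\in W$ since the action is linear, so $N_G(H)\cdot w$ has at most $[N_G(H):N_G(H)^0]<\infty$ points and is therefore finite and closed. Thus everything reduces to constructing $V$, $\lambda$ and $w_0$.

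Second, I would extract $\lambda$ from the hypothesis. Since $H$ is not $G$-cr, choose $\lambda\in Y(G)$ with $H\subseteq P_\lambda$ but $H$ contained in no R-Levi subgroup of $P_\lambda$; taking $P_\lambda$ minimal with this property, the limit subgroup $M:=\lim_{a\to 0}\lambda(a)H\lambda(a)\inverse=c_\lambda(H)\subseteq L_\lambda$, where $c_\lambda\colon P_\lambda\ra L_\lambda$ is the canonical projection, is $G$-cr, and $H$ is not $R_u(P_\lambda)$-conjugate to $M$. The point is to convert this failure of conjugacy into a positive-weight $H$-fixed vector: writing $H$ as a ``graph'' $\{\gamma(m)m\mid m\in M\}$ over $M$ for a suitable cocycle $\gamma\colon M\ra R_u(P_\lambda)$, the non-conjugacy says exactly that $\gamma$ is not a coboundary, i.e.\ it represents a nontrivial class for $M$ acting on (a layer of) $\Lie(R_u(P_\lambda))$. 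In positive characteristic such classes exist precisely because a reductive $M$ need not be linearly reductive, which is the mechanism behind non-$G$-cr subgroups. Following the constructions of \cite[Sec.~3.8]{relative}, I would realise this class inside a rational $G$-module $V$: the associated graded of the $\lambda$-filtration on $V$ is an $L_\lambda$-module on which $H$ acts through $M$, and an $M$-fixed vector in strictly positive $\lambda$-weight lifts to an honest $H$-fixed vector $w_0\in V$ with all weights positive exactly when the class is realised, giving $(\alpha)$ and $(\beta)$.

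Third, I would secure $(\gamma)$ and then deduce the ``in particular'' clause. Arranging that $C_G(H)^0$ also fixes $w_0$—equivalently that $w_0$ can be taken to centralise $N_G(H)^0$ in $V$—is the delicate point, and I expect this, together with realising the cohomology class inside a genuine $G$-module, to be the main obstacle; the reduction, the extraction of $\lambda$ and the weight bookkeeping above are routine by comparison. Granting the construction, the final assertion is clean: if $N_G(H)$ is reductive then $\psi_{V,H}\colon V^H/N_G(H)\ra V/G$ is defined, and since $0$ is $G$-fixed, $\{0\}$ is the unique closed $G$-orbit in $\overline{G\cdot w}$ for every $w\in W$ by (a), whence $\pi_{V,G}(w)=\pi_{V,G}(0)$ and $\psi_{V,H}$ sends all of $\pi_{V^H,N_G(H)}(W)$ to the single point $\pi_{V,G}(0)$. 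By (b) each $N_G(H)$-orbit in $W$ is finite, hence closed, so distinct orbits have distinct images; as $W$ is infinite this forces $\pi_{V^H,N_G(H)}(W)$ to be infinite, so $\psi_{V,H}$ has an infinite fibre and is not quasi-finite.
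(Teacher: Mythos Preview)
Your reduction in the first paragraph is clean and is exactly the right skeleton: once you have a nonzero $H$-fixed vector $w_0$ in a rational $G$-module all of whose $\lambda$-weights are positive and which is also fixed by $C_G(H)^0$, everything else follows just as you say. The final paragraph's deduction of the ``in particular'' clause is also correct.

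The genuine gap is precisely the one you flag yourself: you do not supply an argument for $(\gamma)$, and the cohomological route you sketch in the second paragraph does not obviously provide one. Writing $H$ as a graph $\{\gamma(m)m\mid m\in M\}$ over $M=c_\lambda(H)$ already requires that $c_\lambda|_H\colon H\to M$ be an isomorphism of algebraic groups, which can fail in positive characteristic; and even when the cocycle formalism goes through, the $M$-fixed vector you produce in a layer of the $\lambda$-filtration has no evident reason to be $C_G(H)^0$-fixed. So as it stands the proposal is an outline with the hardest step left open.

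The paper avoids this obstacle by a completely different device. Rather than building $w_0$ cohomologically and then trying to force $C_G(H)^0$-invariance, it starts from an object that is $C_G(H)$-fixed \emph{by construction}: a generic tuple $\mathbf{x}\in H^n\subseteq(\Mat_m)^n$ for $H$, where $G$ acts by simultaneous conjugation. Since the entries of $\mathbf{x}$ lie in $H$, every element of $C_G(H)$ fixes $\mathbf{x}$; and since $H$ is not $G$-cr, $G\cdot\mathbf{x}$ is not closed. The remaining work is to pass from $\mathbf{x}$ to an $H$-fixed point (it is not $H$-fixed, only $C_G(H)$-fixed) without losing the $C_G(H)$-invariance or the non-closedness of the $G$-orbit. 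This is done by quotienting $(\Mat_m)^n$ by a suitable right $\GL_n(k)$-action through which the $H$-conjugation action on $\mathbf{x}$ factors; the image $x$ of $\mathbf{x}$ in the quotient $X$ is then both $H$-fixed and $C_G(H)$-fixed, and one checks $G\cdot x$ is still not closed. Finally a standard embedding (Kempf's \cite[Lem.~1.1(b)]{kempf}) pushes $x$ into a genuine $G$-module $V$ with $0$ in the closure of the image. The moral is that the generic tuple already carries $(\gamma)$ for free; the effort goes into turning it into something $H$-fixed rather than the other way around.
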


\begin{proof}
Choose a closed embedding $G \hookrightarrow \SL_m(k)$ for some $m$ and think of $H$ and $G$ as closed subgroups of $\SL_m(k)$.
Let $\Mat_m$ denote the algebra of all $m\times m$ matrices.
Let $\mathbf{x} = (x_1,\ldots,x_n) \in H^n$ be a basis for the associative subalgebra of $\Mat_m$ spanned by $H$;
then $\mathbf{x}$ is a generic tuple for $H$ (see Section \ref{subsec:cr}).
This means that if we let $\SL_m(k)$ act on $Y := (\Mat_m)^n$ by simultaneous conjugation, then $G\cdot \mathbf{x}$ is not closed.
Note that since $H$ is itself $H$-cr, the $H$-orbit of $H\cdot \mathbf{x}$ is closed in $Y$.

There is also a right action of $\GL_n(k)$ on $Y$, which we denote by $\ast$.
Given a matrix $A=(a_{ij}) \in \GL_n(k)$ and an element $\mathbf{y} = (y_1,\ldots,y_n) \in Y$, we can set
$$
\mathbf{y}\ast A = \left(\sum_{i=1}^n a_{i1}y_i, \ldots, \sum_{i=1}^n a_{in}y_i\right).
$$
This is the action obtained by thinking of the tuple $\mathbf{y}$ as a row vector of length $n$ and letting the $n\times n$ matrix $A$
act on the right in the obvious way.
Note that the $\SL_m(k)$- and $\GL_n(k)$-actions commute.

Given any $h \in H$, since $\mathbf{x}$ is a basis for the associative algebra generated by $H$, we have that $h\cdot\mathbf{x}$
is also a basis for this algebra, and hence there exists a unique $A(h) \in \GL_n(k)$ such that $h \cdot \mathbf{x} = \mathbf{x} \ast A(h)$.
Note also that $$
(h_1h_2) \cdot \mathbf{x} = h_1\cdot (h_2\cdot \mathbf{x}) = h_1\cdot (\mathbf{x}\ast A(h_2)) = (h_1\cdot \mathbf{x})\ast A(h_2) = \mathbf{x} \ast (A(h_1)A(h_2)),
$$
and hence the map $A:H \to \GL_n(k)$ is a group homomorphism.
This map is in fact a rational representation of $H$ since it arises from the morphic
action of $H$ on the vector space spanned by the entries of $\mathbf{x}$.
Let $K$ denote the image of $H$ in $\GL_n(k)$; then $K$ is a reductive group and $\mathbf{x}\ast K= H\cdot \mathbf{x}$ is closed.
Moreover, since the elements of the tuple $\mathbf{x}$ are linearly independent, the stabilizer of $\mathbf{x}$ in $K$ is trivial.
Hence $\mathbf{x}$ is a stable point for the action of $K$ on $Y$.
Now let $X= Y/K$
and set $x:=\pi_{Y,K}(\mathbf{x})$.
Since the $\SL_m(k)$- and $\GL_n(k)$-actions on $Y$ commute, we obtain an action of $\SL_m(k)$ on $X$.
It is immediate that $x \in X^H$.

We know that $G\cdot \mathbf{x}$ is not closed in $Y$, so there exists a cocharacter $\lambda \in Y(G)$ such that
$\lim_{a\to 0} \lambda(a) \cdot \mathbf{x} = \mathbf{y}$ exists and is not $G$-conjugate to $\mathbf{x}$.
Since $\pi_{Y,K}$ is $G$-equivariant, it is easy to see that $\lim_{a\to 0} \lambda(a) \cdot x = \pi_{Y,K}(\mathbf{y})$
(and in particular this limit exists).
Suppose $\pi_{Y,K}(\mathbf{y})$ is $G$-conjugate to $x$.
Then there exists $g \in G$ such that $g\cdot\pi_{Y,K}(\mathbf{y}) = \pi_{Y,K}(g\cdot\mathbf{y}) = x$,
so $g\cdot\mathbf{y} \in \pi_{Y,K}^{-1}(x) = \pi_{Y,K}^{-1}(\pi_{Y,K}(\mathbf{x}))$.
But $\mathbf{x}$ is a stable point for $K$, so $\pi_{Y,K}\inverse(\pi_{Y,K}(\mathbf{x}))$ is
precisely $K\cdot \mathbf{x}$, which coincides with $H\cdot \mathbf{x}$ by construction.
Hence $g\cdot\mathbf{y} = h\cdot\mathbf{x}$ for some $h \in H$ and we see that $\mathbf{y}$ and $\mathbf{x}$ are $G$-conjugate, which is
a contradiction.
Hence $\pi(\mathbf{y})$ and $x$ are not conjugate, and the $G$-orbit of $x \in X^H$
is not closed, which proves (i).

To prove (ii), let $S$ denote the unique closed $G$-orbit in the closure of $G\cdot x$.
Then, following \cite[Lemma 1.1(b)]{kempf},
we can find a rational $G$-module $V$ with a $G$-equivariant morphism $\phi:X\to V$
such that $\phi^{-1}(0) = S$.
Since $G\cdot x$ is not closed, it does not meet $S$, and hence $v:=\phi(x) \neq 0$.
However, by Theorem~\ref{thm:kempf},
there exists $\mu \in Y(G)$ such that $\lim_{a\to 0} \mu(a)\cdot x \in S$, and since the
morphism $\phi$ is $G$-equivariant, we have that $\{0\}$ is the unique closed $G$-orbit in the
closure of $G\cdot v$.
Note also that $v$ is $H$-fixed since $x$ is.
Now the tuple $\mathbf{x}$ consists of elements of $H$, so is $C_G(H)$-fixed,
and hence $x = \pi_{Y,K}(\mathbf{x})$ is also $C_G(H)$-fixed, which means that $x$ is actually $HC_G(H)$-fixed.
Since $H$ is reductive, $N_G(H)^0 = H^0C_G(H)^0$, so $x$ is
$N_G(H)^0$-fixed and hence the $N_G(H)$-orbit of $x$ is finite.
This in turn implies that the $N_G(H)$-orbit of $v$ is finite, and hence closed in $V$.
Finally, let $W\subseteq V^H$ be the one-dimensional subspace of $V$ spanned by $v$.
Then for all $w \in W$, $N_G(H)\cdot w$ is finite, hence closed, and $0$ is in the closure of $G\cdot w$,
so we have parts (a) and (b) of (ii).
For the second statement, if $N_G(H)$ is reductive---so that it definitely makes sense to talk about the
quotient $V^H/N_G(H)$---then the image of $W$ in $V^H/N_G(H)$ is still infinite, but every element
of this infinite set is mapped to the point corresponding to $0$ in $V/G$ under the natural morphism
$V^H/N_G(H) \to V/G$, so this morphism cannot be quasi-finite.
\end{proof}

With this result in hand, we can
provide the first step towards the proof of Theorem \ref{thm:mainthm}
by showing that the morphism $\psi_{X,H}$ is quasi-finite.

\begin{thm}
\label{thm:quasifinite}
Suppose $H$ is a reductive subgroup of $G$.
The following conditions on $H$ are equivalent:
\begin{itemize}
\item[(i)] $N_G(H)$ is reductive and for every affine $G$-variety $X$, the natural
morphism $\psi_{X,H}:X^H/N_G(H)\to X/G$ is quasi-finite;
\item[(ii)] $H$ is $G$-cr.
\end{itemize}
\end{thm}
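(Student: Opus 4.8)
The plan is to prove the two implications separately; the implication (i)$\Rightarrow$(ii) is essentially immediate, and all the work lies in (ii)$\Rightarrow$(i). For (i)$\Rightarrow$(ii) I argue by contraposition. Suppose $H$ is not $G$-cr. If $N_G(H)$ is not reductive, then the first clause of (i) already fails. Otherwise $N_G(H)$ is reductive, and then Lemma~\ref{lem:notGcrfixedpoint}(ii) hands us a rational $G$-module $V$ for which $\psi_{V,H}\colon V^H/N_G(H)\to V/G$ is not quasi-finite, so the second clause of (i) fails. Either way (i) fails, as required.

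For (ii)$\Rightarrow$(i) assume $H$ is $G$-cr. Reductivity of $N_G(H)$ I would deduce from the fact that $C_G(H)$ is $G$-cr (hence reductive) whenever $H$ is $G$-cr \cite{BMR}, together with $N_G(H)^0 = H^0C_G(H)^0$ \cite[Lem.\ 6.8]{martin1} and $R_u(N_G(H))\sse N_G(H)^0$. It remains to prove that $\psi_{X,H}$ is quasi-finite for every affine $G$-variety $X$, i.e.\ that each fibre is finite. Fix $\eta\in X/G$ and let $G\cdot x_0$ be the unique closed $G$-orbit in $\pi_{X,G}\inverse(\eta)$, with $K:=G_{x_0}$ reductive by Lemma~\ref{lem:quotientorbit}(ii). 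Since $N_G(H)$ is reductive, the points of $X^H/N_G(H)$ biject with closed $N_G(H)$-orbits in $X^H$; and by Proposition~\ref{prop:htog} a closed orbit $N_G(H)\cdot y$ forces $G\cdot y$ closed, so if it lies over $\eta$ then $G\cdot y = G\cdot x_0$. Hence $\psi_{X,H}\inverse(\eta)$ is in bijection with the closed $N_G(H)$-orbits contained in $(G\cdot x_0)^H:=(G\cdot x_0)\cap X^H$, a set depending only on the intrinsic $G$-variety $G\cdot x_0\iso G/K$. Applying Proposition~\ref{prop:htog} once more, these are precisely the orbits $N_G(H)\cdot(g\cdot x_0)$ with $g\inverse Hg$ a $K$-cr subgroup of $K$, and two such coincide exactly when the subgroups $g\inverse Hg$ are $K$-conjugate. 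Thus the fibre is identified with the set of $K$-conjugacy classes of $K$-cr subgroups of $K$ that are $G$-conjugate to $H$, and everything reduces to proving this set is finite.

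My plan for the finiteness is a dimension count. Replacing $x_0$ by a representative $y=g\cdot x_0$ (and $K$ by $gKg\inverse$), I may assume $H\le K$ and $H$ is $K$-cr, so $N_G(H)\cdot x_0$ is closed. Using $N_G(H)^0=H^0C_G(H)^0$ and the containments $H\le N_G(H)\cap K$ and $C_K(H)=C_G(H)\cap K\le N_G(H)\cap K$, one computes $\dim N_G(H)\cdot x_0 = \dim C_G(H)-\dim C_K(H)$. On the other hand, choosing a generic tuple $\mathbf{h}$ for $H$ and setting $M=(G\cdot\mathbf{h})\cap K^n$ (a $K$-stable closed subset of $K^n$, with $G\cdot\mathbf{h}$ closed because $H$ is $G$-cr), the transporter $\{g\mid g\inverse Hg\le K\}$ fibres over $M$ with fibres cosets of $C_G(H)$ and maps to $(G\cdot x_0)^H$ with fibres cosets of $K$, giving local dimension $\dim_{x_0}(G\cdot x_0)^H=\dim_{\mathbf{h}}M+\dim C_G(H)-\dim K$. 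So the argument closes provided
$$\dim_{\mathbf{h}}M = \dim K\cdot\mathbf{h}=\dim K-\dim C_K(H)$$
whenever $K\cdot\mathbf{h}$ is closed: the two dimensions then agree, the closed orbit $N_G(H)\cdot x_0$ has full local dimension in $(G\cdot x_0)^H$, and being closed it is a union of irreducible components. Distinct fibre points then lie in distinct components, of which there are finitely many, so the fibre is finite.

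The main obstacle is exactly this equality $\dim_{\mathbf{h}}M=\dim K\cdot\mathbf{h}$ at points with closed $K$-orbit, i.e.\ that $M$ gains no extra dimension transverse to its closed $K$-orbits. This amounts to the tangent-space estimate that $T_{\mathbf{h}}(G\cdot\mathbf{h})\cap T_{\mathbf{h}}(K^n)$ has dimension at most $\dim K\cdot\mathbf{h}+\dim C_G(H)$. In characteristic $0$ this is forced by reductivity, and is precisely where Luna's \'etale slice does the work; the genuine positive-characteristic difficulty is that orbit maps need not be separable, so tangent spaces may strictly exceed orbit dimensions. I expect to handle this using the semicontinuity statement Lemma~\ref{lem:semicontinuity} (which builds in $\ker(d_1\kappa_x)$ precisely to absorb inseparability) together with Lemma~\ref{lem:twovarieties} to move closedness between the $G^n$- and $X$-pictures, thereby reducing the infinitesimal bound to orbit-closedness facts already available via Proposition~\ref{prop:htog}.
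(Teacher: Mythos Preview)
Your argument for (i)$\Rightarrow$(ii) and your reduction of (ii)$\Rightarrow$(i) to the statement ``there are only finitely many $K$-conjugacy classes of $K$-cr subgroups of $K$ that are $G$-conjugate to $H$'' are correct and match the paper almost word for word (the paper establishes the bijection between closed $N_G(H)$-orbits in $G\cdot x_0\cap X^H$ and such $K$-classes in exactly the same way, via Proposition~\ref{prop:htog}).

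The divergence is in how you finish. The paper does not attempt a dimension count at all: it simply invokes \cite[Thm.\ 1.1]{martin1}, which says that the natural map $K^n/K\to G^n/G$ is finite. Since $G\cdot\mathbf{h}$ is a single point of $G^n/G$, its preimage in $K^n/K$ is finite, and that preimage is exactly the set of closed $K$-orbits in $M=(G\cdot\mathbf{h})\cap K^n$. This is the whole argument.

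Your proposed route has a genuine gap, and it is not just a missing detail. You want each closed $K$-orbit in $M$ to be an entire irreducible component, i.e.\ $\dim_{\mathbf{h}'}M=\dim K\cdot\mathbf{h}'$ whenever $K\cdot\mathbf{h}'$ is closed. But $M$ can contain generic tuples for \emph{non}-$K$-cr conjugates $g^{-1}Hg\leq K$; the corresponding $K$-orbits are not closed, and their closures (which lie in the closed set $M$) contain closed $K$-orbits of strictly smaller dimension. At such a boundary point $\mathbf{h}''$ one has $\dim_{\mathbf{h}''}M>\dim K\cdot\mathbf{h}''$, so your equality fails there even though $K\cdot\mathbf{h}''$ is one of the finitely many closed orbits you are trying to count. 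In other words, the intermediate claim you rely on can be false while the conclusion (finiteness) is still true; the argument is trying to prove more than actually holds. Neither Lemma~\ref{lem:semicontinuity} nor Lemma~\ref{lem:twovarieties} addresses this: the obstruction is not inseparability of an orbit map but the presence of higher-dimensional non-closed $K$-orbits in $M$, which is an honest orbit-geometry phenomenon. The clean fix is exactly the external input the paper uses, namely the finiteness of $K^n/K\to G^n/G$ from \cite{martin1}.
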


\begin{proof}
Suppose $H$ is not $G$-cr.
Then either $N_G(H)$ is not reductive, in which case the first part of condition (i) fails, or else $N_G(H)$ is reductive but the
second part of condition (i) fails by Lemma~\ref{lem:notGcrfixedpoint}(ii).
Hence (i) implies (ii).

Conversely, suppose $H$ is $G$-cr, and let $X$ be any affine $G$-variety.
Since $H$ is $G$-cr, and hence $H$ is reductive, we have $N_G(H)^0 = H^0C_G(H)^0$.
That $N_G(H)$ is reductive is shown in \cite[Prop. 3.12]{BMR},
and hence it always makes sense to take the quotient $X^H/N_G(H)$.

Suppose $x\in X^H$.
We first claim that the unique closed $G$-orbit $S$ in $\overline{G\cdot x}$ meets $X^H$.
Indeed, either $G\cdot x$ is already closed, in which case $S=G\cdot x$, or we can find the optimal parabolic $P(x)$ and
optimal class $\Omega(x)$ as given in Kempf's Theorem \ref{thm:kempf}.
Since $H\leq G_x \leq P(x)$ and $H$ is $G$-cr, there is
a Levi subgroup $L$ of $P(x)$ containing $H$.
Since the unipotent radical acts simply transitively on $\Omega(x)$ and on the set of Levi subgroups of $P(x)$,
there is precisely one element
$\lambda \in \Omega(x)$ with $L = L_\lambda$, and this choice of $\lambda$ commutes with $H$.
But then $y:= \lim_{a\to0}\lambda(a)\cdot x \in S\cap X^H$, which proves the claim.

Now any point of $X/G$ has the form $\pi_{X,G}(x)$, where $G\cdot x$ is closed in $X$.  So let $x\in X$ such that $G\cdot x$ is closed.
For any $y\in \pi_{X,G}^{-1}(\pi_{X,G}(x)) \cap X^H$, $G\cdot x$ is the unique closed $G$-orbit in $\overline{G\cdot y}$.
Hence, if $\pi_{X,G}^{-1}(\pi_{X,G}(x)) \cap X^H$ is nonempty, $G\cdot x$ must meet $X^H$, by the claim in the previous paragraph.
It follows from the definitions that $\pi_{X^H,N_G(H)}^{-1}(\psi_{X,H}^{-1}(\pi_{X,G}(x))) = \pi_{X,G}^{-1}(\pi_{X,G}(x))\cap X^H$,
so to show that $\psi_{X,H}$ is quasi-finite,
we need to show that for each such $x$ there are only finitely many closed $N_G(H)$-orbits in $\pi_{X,G}^{-1}(\pi_{X,G}(x))\cap X^H$.
But any $y\in X^H$ with a closed $N_G(H)$-orbit has a closed $G$-orbit, by Proposition \ref{prop:htog},
and hence any $y\in \pi_{X,G}^{-1}(\pi_{X,G}(x))\cap X^H$ with a closed $N_G(H)$-orbit is already $G$-conjugate to $x$.
So we must show that there are only finitely many closed $N_G(H)$-orbits in $G\cdot x\cap X^H$.

Fix $x\in X^H$ with $G\cdot x$ closed, and recall that $G_x$ is reductive since $G\cdot x $ is closed.
Let $y\in G\cdot x\cap X^H$, and write $y=g\cdot x$ for some $g\in G$.
Since $G\cdot y$ is closed, Proposition \ref{prop:htog} says that $N_G(H)\cdot y$ is closed if and only if $H$ is $G_y$-cr,
which is the case if and only if $g\inverse Hg$ is $G_x$-cr.
Suppose $g\inverse Hg$ and $H$ are $G_x$-conjugate: say $H = g_1\inverse (g\inverse Hg)g_1$ for some $g_1\in G_x$.
Then $gg_1 \in N_G(H)$ and $y = g\cdot x = (gg_1)\cdot x$, so we see that $x$ and $y$ are $N_G(H)$-conjugate.  Conversely, suppose $x$ and $y$ are $N_G(H)$-conjugate: say $y= m\cdot x$ for some $m\in N_G(H)$.  Then $m^{-1}g\in G_x$ and $m^{-1}g(g^{-1}Hg)g^{-1}m= H$, so $g\inverse Hg$ and $H$ are $G_x$-conjugate.
Hence the distinct closed $N_G(H)$-orbits in $G\cdot x\cap X^H$ correspond to the distinct
$G_x$-conjugacy classes of $G_x$-cr subgroups of the form $g\inverse Hg$ inside $G_x$.  It is therefore enough to show that there are only finitely many such conjugacy classes.

Let $\mathbf{h} \in H^n$ be a generic tuple for $H$ in $G_x$ for some $n$ and let $g\in G$ such that $g\inverse Hg$ is a $G_x$-cr subgroup of $G_x$.  Then $g\inverse\cdot\mathbf{h}$ is a generic tuple for $g\inverse H g$.  
Since $g\inverse Hg$ is both $G$-cr and $G_x$-cr, 
 the $G$- and $G_x$-orbits of $\mathbf{h}$ in $G^n$ are both closed.
It follows from \cite[Thm.\ 1.1]{martin1} that the natural map of quotients $G_x^n/G_x \to G^n/G$ is finite,
and hence there are only finitely many closed $G_x$-orbits contained in $G\cdot\mathbf{h} \cap G_x^n$.  This proves the result.
\end{proof}

\begin{rem}
\label{rem:singletons}
Note that if $G_x = H$ and $G\cdot x$ is closed then the argument in the proof above shows that there is precisely one closed $N_G(H)$-orbit inside $G\cdot x \cap X^H$ (namely, $N_G(H)\cdot x$),
and therefore $\psi_{X,H}^{-1}(\pi_{X,G}(x))$ is a singleton.  We will use this observation in Sections~\ref{sec:lunapartthree} and \ref{sec:sep}.
\end{rem}

The third paragraph of the proof above shows that for any $x \in X^H$, the unique closed orbit contained
in $\overline{G\cdot x}$ also meets $X^H$.
This allows us to prove the following:

\begin{lem}
\label{lem:closedcrit}
The map $\psi_{X,H}\colon X^H/N_G(H)\ra X/G$ of Theorem \ref{thm:mainthm}
has closed image if and only if for all
$x\in \overline{G\cdot X^H}$ such that $G\cdot x$ is closed, $x\in G\cdot X^H$.
\end{lem}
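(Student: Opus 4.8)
The plan is to describe the image of $\psi_{X,H}$ inside $X/G$ purely in terms of closed $G$-orbits, and then to compare it with the image of the closed $G$-stable set $\overline{G\cdot X^H}$. Since $\psi_{X,H}$ sends $\pi_{X^H,N_G(H)}(x)$ to $\pi_{X,G}(x)$, its image is exactly $\pi_{X,G}(X^H)$, and by $G$-invariance of $\pi_{X,G}$ this equals $\pi_{X,G}(G\cdot X^H)$. Recall from Section~\ref{sec:catquot} that $\pi_{X,G}$ restricts to a bijection between the closed $G$-orbits in $X$ and the points of $X/G$, with each fibre containing a unique closed orbit. First I would use this, together with the observation recorded immediately before the statement (that for every $x\in X^H$ the unique closed orbit in $\overline{G\cdot x}$ meets $X^H$), to identify the image precisely: a point $\xi\in X/G$ lies in the image of $\psi_{X,H}$ if and only if the unique closed orbit in the fibre $\pi_{X,G}^{-1}(\xi)$ meets $X^H$, equivalently is contained in $G\cdot X^H$.

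Next, set $Z=\overline{G\cdot X^H}$, a closed $G$-stable subvariety of $X$. By Lemma~\ref{lem:insep}, $\pi_{X,G}(Z)$ is closed in $X/G$; moreover, by continuity of $\pi_{X,G}$ one has $\pi_{X,G}(Z)\subseteq\overline{\pi_{X,G}(G\cdot X^H)}$, while $\pi_{X,G}(G\cdot X^H)\subseteq\pi_{X,G}(Z)$ trivially. Hence $\pi_{X,G}(Z)$ is exactly the closure of the image of $\psi_{X,H}$. In the same way as above, $\xi\in\pi_{X,G}(Z)$ if and only if the unique closed orbit in $\pi_{X,G}^{-1}(\xi)$ is contained in $Z$. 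Since $G\cdot X^H\subseteq Z$, the image is always contained in its closure $\pi_{X,G}(Z)$, and it is closed if and only if the two coincide, i.e.\ if and only if $\pi_{X,G}(X^H)=\pi_{X,G}(Z)$.

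Finally I would translate the equality $\pi_{X,G}(X^H)=\pi_{X,G}(Z)$ into the stated orbit condition. By the two descriptions above, this equality says exactly that every closed $G$-orbit contained in $Z=\overline{G\cdot X^H}$ already lies in $G\cdot X^H$; and for a closed orbit, lying in $G\cdot X^H$ is the same as meeting $X^H$, which in turn is the same as the representative point $x$ itself lying in $G\cdot X^H$ (if $g^{-1}x\in X^H$ then $x\in G\cdot X^H$, and conversely). Thus the image is closed precisely when every $x\in\overline{G\cdot X^H}$ with $G\cdot x$ closed satisfies $x\in G\cdot X^H$, as required. The main point to be careful about is the identification in the first step: one must invoke the recorded observation in order to pass from ``the fibre over $\xi$ meets $X^H$'' to ``the closed orbit over $\xi$ meets $X^H$'', since a priori an $H$-fixed point of a fibre need not lie on that fibre's closed orbit. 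Once this identification is in place, the remaining steps are formal consequences of Lemma~\ref{lem:insep} and the standard properties of the quotient map $\pi_{X,G}$.
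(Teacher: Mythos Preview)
Your proof is correct and follows essentially the same approach as the paper's. The paper compresses the argument by first replacing $X$ with $Z=\overline{G\cdot X^H}$ (so that closedness of the image becomes equivalent to surjectivity of $\psi_{Z,H}$), whereas you keep $X$ fixed and instead invoke Lemma~\ref{lem:insep} explicitly to identify $\pi_{X,G}(Z)$ as the closure of the image; both routes rest on the same two ingredients---the observation that the closed orbit in $\overline{G\cdot x}$ meets $X^H$ for $x\in X^H$, and the bijection between closed $G$-orbits and points of the quotient.
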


\begin{proof}
Since $\overline{G\cdot X^H}$ is closed and $G$-stable, we may replace $X$ with 
$\overline{G\cdot X^H}$; 
then saying $\psi_{X,H}$ has closed image is the same as saying that $\psi_{X,H}$ is surjective.
But this is equivalent to saying that the fibre above every point of $X/G$ meets $X^H$.
Since each fibre contains a unique closed orbit, the observation before the Lemma gives the result.
\end{proof}

Now we extend Luna's result \cite[Cor.\ 3]{luna} to positive characteristic.

\begin{prop}
\label{prop:luna2}
Suppose $H$ is a reductive subgroup of $G$.
The following conditions on $H$ are equivalent:
\begin{itemize}
\item[(i)] for every affine $G$-variety $X$, every $G$-orbit in $X$ that meets $X^H$ is closed;
\item[(ii)] $H$ is $G$-cr and $N_G(H)/H$ is a finite group.
\end{itemize}
\end{prop}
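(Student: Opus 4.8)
The plan is to establish the two implications separately, using Proposition~\ref{prop:htog} as the main engine in each case.

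For (ii)$\Rightarrow$(i), assume $H$ is $G$-cr and $N_G(H)/H$ is finite. The key observation I would make is that finiteness of $N_G(H)/H$ forces $N_G(H)^0=H^0$, and since $N_G(H)^0=H^0C_G(H)^0$ this gives $C_G(H)^0\sse H^0$. Now take any $x\in X^H$. Then $N_G(H)^0=H^0$ fixes $x$, so $N_G(H)\cdot x$ is a finite set and hence closed. Because $H$ is $G$-cr, Proposition~\ref{prop:htog} applies and its implication (i)$\Rightarrow$(ii) tells us that $N_G(H)\cdot x$ being closed forces $G\cdot x$ to be closed. As $x\in X^H$ was arbitrary, every $G$-orbit meeting $X^H$ is closed, which is (i).

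For (i)$\Rightarrow$(ii), I would argue first that if $H$ were not $G$-cr then Lemma~\ref{lem:notGcrfixedpoint}(i) would produce an affine $G$-variety with an $H$-fixed point on a non-closed orbit, contradicting (i); hence $H$ is $G$-cr. It then remains to show $N_G(H)/H$ is finite, and I would do this by contradiction. If $N_G(H)/H$ is infinite then $N_G(H)^0=H^0C_G(H)^0$ has dimension exceeding $\dim H$, so $C_G(H)^0\not\sse H$. Using that $C_G(H)$ is reductive (which holds since $H$ is $G$-cr, see \cite{BMR}) together with the identity $C_G(H)\cap H=Z(H)$, I can find a cocharacter $\lambda\in Y(C_G(H))$ whose image is not contained in $H$: otherwise the images of all cocharacters of the connected reductive group $C_G(H)^0$ would lie in $C_G(H)\cap H=Z(H)$ and so would generate a subgroup of $H$, forcing $C_G(H)^0\sse H$.

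The construction of the counterexample variety is the crux. Since $H$ is reductive, $G/H$ is affine (Theorem~\ref{thm:richquotient}), and I would embed it $G$-equivariantly in a rational $G$-module $V$; let $v\in V$ be the image of the base point $\pi_{G,H}(1)$, which is fixed by $H$, so $v\in V^H$. Decomposing $v=\sum_d v_d$ into $\lambda$-weight components, I would observe that because $\lambda\in Y(C_G(H))$ commutes with $H$ the weight spaces are $H$-stable and each weight projection is $H$-equivariant, so every $v_d$ again lies in $V^H$. The curve $a\mapsto\lambda(a)\cdot v$ is the image of $a\mapsto \lambda(a)H$, which is non-constant precisely because $\operatorname{im}\lambda\not\sse H$; hence some $v_d$ with $d\neq 0$ is nonzero, and replacing $\lambda$ by $\lambda\inverse$ if necessary I may assume $d>0$. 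Then $v_d\in V^H$ is nonzero with $\lim_{a\to 0}\lambda(a)\cdot v_d=0$, so $0\in\ovl{G\cdot v_d}\setminus(G\cdot v_d)$ and $G\cdot v_d$ is not closed, contradicting (i). The main obstacle is exactly this final step: guaranteeing an $H$-fixed vector on which the destabilizing cocharacter acts with a single strictly positive weight. The a priori vector $v$ coming from $G/H$ may carry mixed weights (and its own orbit, being a copy of $G/H$, can even be closed), so the weight-projection trick---extracting a pure-weight $H$-fixed vector $v_d$---is what actually converts the cocharacter $\lambda\in Y(C_G(H))\setminus H$ into a witness for a non-closed orbit through $X^H$.
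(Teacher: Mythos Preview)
Your proof is correct.  The implication (ii)$\Rightarrow$(i) matches the paper's argument essentially verbatim: finiteness of $N_G(H)/H$ makes every $N_G(H)$-orbit in $X^H$ finite, and then Proposition~\ref{prop:htog} closes up the $G$-orbit.

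For (i)$\Rightarrow$(ii) your route diverges from the paper's in an interesting way.  Both arguments begin with Lemma~\ref{lem:notGcrfixedpoint} to force $H$ to be $G$-cr, and both then exploit a cocharacter of $C_G(H)$ whose image escapes $H$.  But the paper first proves the structural fact that $C_G(H)^0$ is a \emph{torus} (by applying (i) to the conjugation action of $G$ on itself and invoking \cite[Cor.~3.6]{SS}), and then builds the counterexample variety as an associated bundle $X=(G\times k_\chi)/Z$ with $Z=HS$ for a one-dimensional subtorus $S\leq C_G(H)^0$ not contained in $H$.  You instead embed $G/H$ equivariantly in a $G$-module, decompose the image $v$ of the base point into $\lambda$-weight components, and observe that the $H$-equivariance of the weight projections hands you an $H$-fixed vector of pure nonzero weight---an unstable point in $V^H$.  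This is shorter and avoids both the associated-bundle construction and the preliminary torus statement; the only small price is that your claim ``if every cocharacter of $C_G(H)^0$ lands in $H$ then $C_G(H)^0\sse H$'' tacitly uses that a connected reductive group is generated by its maximal tori (equivalently, that its semisimple elements are dense), which is standard but worth a one-line citation.  Conversely, the paper's detour through the conjugation action yields the extra information that $C_G(H)^0$ is a torus and $(N_G(H)/H)^0$ is a torus, which is of independent interest even though it is not needed for the bare equivalence.
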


\begin{proof}
Suppose (i) holds.
Then $H$ must be $G$-cr, by Lemma~\ref{lem:notGcrfixedpoint}.
Since $H$ is reductive, $N_G(H)^0 = H^0C_G(H)^0$.
Let $x \in C_G(H)^0$ and let $G$ act on itself by conjugation.
We have $x \in C_G(H) = G^H$, so the $G$-orbit of $x$ (i.e., the conjugacy class of $x$) must be closed in $G$.
As $x$ belongs to $G^0$, it follows from \cite[Cor.\ 3.6]{SS} that $x$ is a semisimple element of $G$.
Since $C_G(H)^0$ consists entirely of semisimple elements, it must be a torus \cite[Cor.\ 11.5(1)]{borel}.
Hence $N_G(H)^0 = H^0C_G(H)^0$ is a reductive group and $(N_G(H)/H)^0$ is a torus.

Now suppose, for contradiction, that $N_G(H)/H$ is infinite.
Then there exists a one-dimensional subtorus $S$ of $C_G(H)^0$ not contained in $H$.
To ease notation, let $Z = HS$ and note that $Z$ is reductive.
Since $H$ is normal in $Z$ and $Z/H \iso S/(S\cap H)$ is a one-dimensional torus, we have a
multiplicative character $\chi:Z \to k^*$ with kernel $H$;
let $V$ denote the corresponding $1$-dimensional $Z$-module.
Set $Y = G\times V$, let $Z$ act on $Y$ via $z\cdot(g,v) := (gz\inverse,\chi(z)v)$,
and let $G$ act by left multiplication on the first factor and trivially on the second factor.
Now let $X = Y/Z$; this is a special case of a construction described in \cite[I.3]{luna0}.  Since $Z$ is reductive and $Y$ is affine, $X$ is affine, and since $Z$ acts freely on $Y$, the fibres of $\pi_{Y,Z}$ are precisely the $Z$-orbits in $Y$.
Moreover, since the $G$- and $Z$-actions on $Y$ commute, $X$ is naturally a $G$-variety.  Let $0\neq v\in V$ and choose a cocharacter $\lambda$ of $Z$ such that $m:= -\langle \lambda,\chi\rangle> 0$.  Then $\lambda(a)\cdot \pi_{Y,Z}(1,v)= \pi_{Y,Z}(1,\chi(\lambda(a^{-1}))v)= \pi_{Y,Z}(1,a^mv)$ for all $a\in k^*$, so $\lim_{a\to 0} \lambda(a)\cdot \pi_{Y,Z}(1,v)= \pi_{Y,Z}(1,0)\not\in G\cdot \pi_{Y,Z}(1,v)$, so $G\cdot \pi_{Y,Z}(1,v)$ is not closed.  However, $\pi_{Y,Z}(1,v)$ is $H$-fixed, and we have our contradiction.  Hence $N_G(H)/H$ is finite. This completes the proof that (i) implies (ii).

Conversely, suppose (ii) holds and $X$ is any affine $G$-variety.
Let $x \in X^H$, so that $H \leq G_x$.
Since $N_G(H)/H$ is finite, $N_G(H)\cdot x$ is a finite union of $H$-orbits.
But $N_G(H)\cdot x \subseteq X^H$, so each of these $H$-orbits is a singleton and $N_G(H)\cdot x$ is finite, and therefore closed in $X$.
Now we can apply Proposition \ref{prop:htog} to deduce that the $G$-orbit of $x$ is also closed, which gives (i).
\end{proof}


\section{Proof of Theorem \ref{thm:mainthm}, Part 2: surjectivity}
\label{sec:lunaparttwo}

In this section, we prove the following:

\begin{thm}
\label{thm:closed}
 Let $X$ be an affine $G$-variety and let $H$ be a $G$-cr subgroup of $G$.
 Then the map $\psi_{X,H}\colon X^H/N_G(H)\ra X/G$ has closed image.
\end{thm}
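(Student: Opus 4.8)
The plan is to use the criterion established in Lemma~\ref{lem:closedcrit}, which reduces the claim to showing that for every $x\in\overline{G\cdot X^H}$ with $G\cdot x$ closed, we in fact have $x\in G\cdot X^H$. Equivalently, after replacing $X$ with $\overline{G\cdot X^H}$, we must show $\psi_{X,H}$ is surjective, i.e.\ every fibre of $\pi_{X,G}$ meets $X^H$. The first reduction I would make is to the case where $X=V$ is a rational $G$-module: any affine $G$-variety embeds $G$-equivariantly as a closed subvariety of some $G$-module $V$, and by Lemma~\ref{lem:insep} the induced map on quotients is a closed embedding, so closedness of the image for the module case should propagate down to the closed subvariety $X$. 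This lets me work with a linear action throughout.

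Once $X=V$ is a module, the key idea flagged in the introduction is to pass to projectivisations. I would consider the map of projective varieties $\mathbb{P}(V^H)\to\mathbb{P}(V)$ induced by the linear inclusion $V^H\subseteq V$. The group $G$ acts on $\mathbb{P}(V)$ and $N_G(H)$ acts on $\mathbb{P}(V^H)$, and the crucial point the authors emphasise is that $G$-complete reducibility of $H$ guarantees a \emph{well-defined} morphism of GIT quotients $\mathbb{P}(V^H)/G\to\mathbb{P}(V)/G$ (here I expect one needs the general machinery for quotients of projective varieties by reductive groups, which the excerpt says is developed in this very section). The reason $G$-cr is needed is exactly the semistability bookkeeping: by the Hilbert--Mumford criterion, instability of a point $v\in\mathbb{P}(V^H)$ is detected by a destabilising cocharacter of $G$, and $G$-complete reducibility of $H$ lets one replace such a cocharacter by one commuting with $H$ (via an R-Levi subgroup containing $H$, exactly as in the proof of Theorem~\ref{thm:quasifinite}), so that semistability of $v$ as a point of $\mathbb{P}(V)$ matches semistability for the $N_G(H)$-action on $\mathbb{P}(V^H)$. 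This compatibility is what makes the projective quotient map well-defined.

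With the projective quotient map in hand, I would exploit the fact that quotients of projective varieties are \emph{complete} (projective), so the image of $\mathbb{P}(V^H)/G$ in $\mathbb{P}(V)/G$ is automatically closed. The strategy is then to transport this closedness back to the affine cone level: a point $v\in\overline{G\cdot V^H}$ with $G\cdot v$ closed either is $0$ (which trivially lies in $V^H$) or has nonzero image $[v]\in\mathbb{P}(V)$, and the closed-orbit condition should translate into a semistability/polystability statement about $[v]$ in $\mathbb{P}(V)/G$. Using closedness of the projective image together with the earlier observation (before Lemma~\ref{lem:closedcrit}) that the unique closed orbit in $\overline{G\cdot x}$ always meets $X^H$, I would argue that the closed $G$-orbit through $v$ must meet $V^H$, which is precisely the condition demanded by Lemma~\ref{lem:closedcrit}.

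The main obstacle I anticipate is establishing that the projective quotient map $\mathbb{P}(V^H)/G\to\mathbb{P}(V)/G$ is genuinely well-defined, and correctly matching up the notions of (semi)stability on the affine and projective sides. This is where $G$-complete reducibility does the real work, and the linearisation must be chosen carefully so that the Hilbert--Mumford cocharacter coming from $G$ can be arranged to commute with $H$; getting the semistable loci to correspond on both sides, and then reconciling the affine cone picture (where ``closed orbit'' is the natural condition) with the projective picture (where ``semistable/polystable'' is natural), is the technical heart of the argument. The remaining steps—the reduction to modules and the passage from projective completeness back to the affine statement—I expect to be comparatively routine given the preparatory lemmas already available.
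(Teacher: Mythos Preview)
Your overall strategy matches the paper's: reduce to a $G$-module $V$, use $G$-complete reducibility to show that $N_G(H)$-unstable points of $V^H$ are $G$-unstable in $V$ (so semistable loci are compatible), pass to projective GIT quotients where images are automatically closed, and then descend back to the affine cone. The semistability-matching step you flag as the ``technical heart'' is in fact exactly the argument from the third paragraph of the proof of Theorem~\ref{thm:quasifinite}, and is straightforward. (Incidentally, the map should be from ${\mathbb P}(V^H)/N_G(H)$, not ${\mathbb P}(V^H)/G$; $G$ does not act on $V^H$.)

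The genuine gap is the step you dismiss as ``comparatively routine'': recovering the affine statement from the projective one. The problem is that distinct closed $G$-orbits in $V$ can map to the same point of ${\mathbb P}(V_{{\rm ss},G})/G$, because the projective quotient is built from \emph{homogeneous} invariants and loses information about scaling. Concretely, knowing that $\eta_{V,G}(\xi_V(x))=\eta_{V,G}(\xi_V(y))$ for some $y\in V^H$ does \emph{not} immediately give $\pi_{V,G}(x)=\pi_{V,G}(cy)$ for any scalar $c$; the paper gives an explicit example (with $G=C_2\times C_3$) where this fails. The paper's fix is to replace $G$ by $\Gamma=FG$, where $F$ is the group of $m'$th roots of unity acting by scalars and $m'$ is chosen from the degrees of a homogeneous generating set for $k[V]^G$. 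One then shows $\pi_{V,G}(W)$ is closed by showing $\pi_{V,\Gamma}(W)$ is closed (using that $W$ is $F$-stable), and for $\Gamma$ a degree-counting argument with the homogeneous invariants lets one produce the required scalar $c$. This is where the actual work lies, and your proposal does not anticipate it.
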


The proof of Theorem~\ref{thm:closed} in positive characteristic requires some preparation.  Before we begin, we note that if ${\rm char}(k)= 0$ then we can give a much quicker proof using the machinery of \'etale slices, as follows.  Let $x\in \overline{G\cdot X^H}$ such that $G\cdot x$ is closed.  Then there is an \'etale slice through $x$ for the $G$-action \cite[III.1]{luna0}.  By Proposition~\ref{prop:locstab}, there is an open $G$-stable neighbourhood $O$ of $x$ such that $G_y$ is conjugate to a subgroup of $G_x$ for all $y\in O$.  Since $O$ meets $G\cdot X^H$, $H$ must be conjugate to a subgroup of $G_x$.  Hence $x\in G\cdot X^H$, and we are done by Lemma~\ref{lem:closedcrit}.

We need some material on weighted projective varieties and their quotients by reductive groups (cf.\ \cite[Chapter 3, $\S4$]{newstead}).  Let $V$ be a $G$-module equipped with an action of $k^*$ which commutes with the action of $G$. 
Suppose the weights of $k^*$ on $V$ are all positive, so that $\lim_{c\to0} c\cdot v = 0$ for every $v\in V$, where the $c\in k^*$.
The action of $k^*$ decomposes $V$ into weight spaces, and this in turn gives a grading  by non-negative integers of the coordinate ring $k[V]$.  Let $k[V]_i$ denote the $i^{\rm th}$-graded piece of $k[V]$.  We say that $f\in k[V]$ is {\em homogeneous} if $f\in k[V]_i$ for some $i$; in this case we write ${\rm deg}(f)= i$ (so ${\rm deg}(f)$ is the weighted degree rather than the usual degree of a polynomial).  The action of $k^*$ can be diagonalised, so we can choose a basis $\{v_1,\ldots, v_n\}$ for $V$ consisting of weight vectors.  Then the corresponding elements $X_1,\ldots, X_n$ of the dual $V^*$ are weight vectors and we can write $k[V] = k[X_1,\ldots,X_n]$; we set $d_i= {\rm deg}(X_i)$ for each $i$.

Set ${\mathbb W}(V) = \mathrm{Proj}(k[V])$; we call this the \emph{weighted projectivization} of $V$ according to the $k^*$-action \cite{dolgachev}.  Then ${\mathbb W}(V)$ is a projective variety and we may identify the points of ${\mathbb W}(V)$ with the equivalence classes of $V\setminus \{0\}$ under the equivalence relation $\sim$, where $v\sim w$ if and only if $v= c\cdot w$ for some $c\in k^*$.  If the weights of the $k^*$-action on $V$ are all 1---that is, if the action of $k^*$ on $V$ is by ordinary scalar multiplication---then ${\mathbb W}(V)$ is just the ordinary projective space ${\mathbb P}(V)$ associated to $V$, but in Section~\ref{sec:lunapartthree} we will need to consider the general weighted case.  One can show that the canonical projection $\xi_V\colon V\backslash \{0\}\ra {\mathbb W}(V)$ is a good quotient.  If $f\in k[V]$ is homogeneous and ${\rm deg}(f)\geq 1$ then we set ${\mathbb W}(V)_f= \{\xi_V(v)\mid v\in V, f(v)\neq 0\}$; then ${\mathbb W}(V)_f$ is an open affine subset of ${\mathbb W}(V)$, with coordinate ring $(k[V]_f)_0$ (the zero-graded part of the localisation $k[V]_f$).

Since the $G$- and $k^*$-actions commute, the ring $k[V]^G$ of invariants also inherits a grading by non-negative integers: if $f \in k[V]^G$ and $f = f_0 + \cdots +f_r$ is a decomposition with $f_i \in k[V]_i$ for each $i$, then $f_i \in (k[V]^G)_i$ for each $i$.  It is easily checked that the action of $G$ on $V$ descends to give an action of $G$ on ${\mathbb W}(V)$.  We say that $x\in {\mathbb W}(V)$ is a {\em semistable point} (or {\em $G$-semistable point}) if $x\in {\mathbb W}(V)_f$ for some homogeneous $f\in k[V]^G$ such that ${\rm deg}(f)\geq 1$; otherwise we say that $x$ is {\em unstable} (or {\em $G$-unstable}).  We define ${\mathbb W}(V)_{{\rm ss}, G}$ to be the set of $G$-semistable points of ${\mathbb W}(V)$; this is an open subset of ${\mathbb W}(V)$.

Let $Y= {\rm Proj}(k[V]^G)$.  Then $Y$ is a projective variety and the inclusion of $k[V]^G$ in $k[V]$ gives rise to a map $\eta_{V,G}\colon {\mathbb W}(V)_{{\rm ss},G}\ra Y$.  It follows from the proof of \cite[Thm.\ 3.14]{newstead} that $Y$ is a good quotient of ${\mathbb W}(V)_{{\rm ss},G}$ in the sense of \cite[Chapter 3, $\S4$, p57]{newstead} (the argument given in {\em loc.\ cit.}\ is only for the ordinary projective variety ${\mathbb P}(V)$, but it is clear that it holds for the weighted case as well).  We set ${\mathbb W}(V)_{{\rm ss}, G}/G:= Y$.  Moreover, if $f\in k[V]^G$ is homogeneous and ${\rm deg}(f)\geq 1$ then $Y_f:= \eta_{V,G}({\mathbb W}(V)_f)$ is an open affine subvariety of $Y$, with coordinate ring $((k[V]^G)_f)_0$, and the induced map of affine varieties from ${\mathbb W}(V)_f$ to $Y_f$ is a good quotient.

We have an analogous notion of semistable points in the affine variety $V$.  We say that $v\in V$ {\em semistable} (or {\em $G$-semistable}) if $f(v)\neq 0$ for some homogeneous $f\in k[V]^G$ such that ${\rm deg}(f)\geq 1$, and we define $V_{{\rm ss},G}$ to be the set of semistable points; note that $V_{{\rm ss},G}= \xi_V^{-1}({\mathbb W}(V)_{{\rm ss},G})$.  If $v$ is not stable then we say that $v$ is {\em unstable} (or {\em $G$-unstable}).  Since the homogeneous elements of $k[V]^G$ generate $k[V]^G$, $v$ is unstable if and only if $\pi_{V,G}(v)= \pi_{V,G}(0)$.  By the Hilbert-Mumford Theorem, this is the case if and only if there exists $\lambda\in Y(G)$ such that $\lim_{a\to 0} \lambda(a)\cdot v= 0$.  We denote the composition $V_{{\rm ss},G}\to {\mathbb W}(V)_{{\rm ss},G}\stackrel{\eta_{V,G}}{\longrightarrow} Y$ by $\nu_{V,G}$.

Now suppose $K$ is a reductive subgroup of $G$ and $X$ is a closed 
$(K\times k^*)$-stable subvariety of $V$ (so that in particular $0 \in X$).
Then the vanishing ideal for $X$ in $k[V]$ is homogeneous with respect to our fixed $k^*$-grading, so $k[X]$ inherits a grading.  The constructions above still go through replacing $V$ and $G$ with $X$ and $K$.  We have projective varieties ${\mathbb W}(X):= {\rm Proj}(k[X])$ and ${\mathbb W}(X)_{{\rm ss}, K}/K:= Z:= {\rm Proj}(k[X]^K)$, where ${\mathbb W}(X)_{{\rm ss},K}$ is defined analogously to above; the map ${\mathbb W}(X)_{{\rm ss},K}\ra Z$ is a good quotient.  Note that the proof of \cite[Thm.\ 3.14]{newstead} still goes through: all one needs is that $k[X]$ is graded and the $G$-action preserves the grading.  Since ${\mathbb W}(V)$ and ${\mathbb W}(X)$ are categorical quotients of $V\setminus \{0\}$ and $X\setminus \{0\}$ respectively, the inclusion of $X$ in $V$ gives rise to a map from ${\mathbb W}(X)$ to ${\mathbb W}(V)$.

It is clear from the characterisation of semistable points in terms of the Hilbert-Mumford Theorem that $X\cap V_{{\rm ss},G}\subseteq X_{{\rm ss},K}$.  Suppose $X_{{\rm ss},K}\subseteq V_{{\rm ss},G}$; then $X_{{\rm ss},K}= X\cap V_{{\rm ss},G}$.  Since $Y$ and $Z$ are categorical quotients of ${\mathbb W}(V)_{{\rm ss},G}$ and ${\mathbb W}(X)_{{\rm ss},K}$, respectively, the inclusion of ${\mathbb W}(X)_{{\rm ss},K}$ in ${\mathbb W}(V)_{{\rm ss},G}$ gives rise to a map $\phi\colon Z\ra Y$.  Now we come to the point: because $Y$ and $Z$ are projective, the image of $\phi$ is closed.

We can now state and prove the main result of this section.

\begin{prop}\label{prop:closedstrong}
 Let $V$ be a $G$-module equipped with a $k^*$-action as above.  Let $K$ be a reductive subgroup of $G$, let $X$ be a closed $(K\times k^*)$-stable subset of $V$ and suppose $X_{{\rm ss},K}\subseteq V_{{\rm ss},G}$.  Then the natural morphism of quotients $X/K \to V/G$
has closed image (i.e., $\pi_{V,G}(X)$ is closed in $V/G$).
\end{prop}

\begin{proof}
For the purposes of the proof, we need to replace $G$ with a slightly larger 
group
to take into account possible effects of passing to the weighted projectivisation.
Without loss, we may assume that $G$ is a subgroup of $\GL(V)$.
Let $R = k[V]^G$ and let $f_1,\ldots, f_r$ be homogeneous generators for $R$.
Let $m$ be the lowest common multiple of the degrees $\deg(f_1),\ldots,\deg(f_r)$ and write $m = p^\alpha m'$ for some $m'$ coprime to $p$.
Let $F$ be the finite group of $m'^{\rm th}$ roots of unity, regarded as a subgroup of $k^*$ (equipped with its given action on $V$).
Now set $\Gamma = FG$.
Then $\Gamma$ inherits an action on $V$ from the commuting actions of $G$ and $k^*$,
and $V_{{\rm ss},\Gamma} = V_{{\rm ss},G}$ because $\Gamma^0 = G^0$.
Further, $F$ acts on the quotient $V/G$ and the quotient map $\pi_{V/G,F}:V/G \to (V/G)/F = V/\Gamma$ is a geometric quotient.
The subset $X$ of $V$ is $k^*$-stable, and hence $F$-stable, so $\pi_{V,G}(X)$ is a  
$\pi_{V/G,F}$-saturated subset of $V/G$ -- that is, $\pi_{V/G,F}^{-1}(\pi_{V/G,F}(\pi_{V,G}(X))) = \pi_{V,G}(X)$.
Hence, to show the result claimed, we may replace $G$ with $\Gamma$ and show that $\pi_{V,\Gamma}(X)$ is closed in $V/\Gamma$.
Now let $S = k[V]^\Gamma \subseteq k[V]^G$.
A homogeneous $f \in R$ belongs to $S$ if and only if $\deg(f)$ is divisible by $m'$ (since then the action of $F$ is killed by the degree).

To show that $\pi_{V,\Gamma}(X)$ is closed in $V/\Gamma$, it is enough to show that for every $x \in \overline{\Gamma\cdot X}$ with closed $\Gamma$-orbit,
there exists an $x' \in X$ with $\pi_{V,\Gamma}(x') = \pi_{V,\Gamma}(x)$
(cf.\ Lemma \ref{lem:closedcrit}).  
If $x \in \overline{\Gamma\cdot X}$ is unstable and $\Gamma\cdot x$ is closed, then $x$ must actually be $0$, so $x \in X$ also. 
Therefore, we may assume that we have $x \in V_{{\rm ss},\Gamma} \cap \overline{\Gamma\cdot X}$ such that $\Gamma\cdot x$ is closed (in $V$).
By the discussion before the proposition, we have a morphism $\phi:\mathbb{W}(X)_{{\rm ss},K}/K \to \mathbb{W}(V)_{{\rm ss},\Gamma}/\Gamma$ with closed image $C$, say.
Note that since we are assuming $X_{{\rm ss},K} = V_{{\rm ss},\Gamma}\cap X$,
we have $\Gamma\cdot X_{{\rm ss},K} = V_{{\rm ss},\Gamma}\cap \Gamma\cdot X$,
and this set is dense in $V_{{\rm ss},\Gamma}\cap \overline{\Gamma\cdot X}$.
The composition 
$V_{{\rm ss},\Gamma}\stackrel{\xi_V}{\lra} {\mathbb W}(V)_{{\rm ss},\Gamma}\stackrel{\eta_{V,\Gamma}}{\lra} {\mathbb W}(V)_{{\rm ss},\Gamma}/\Gamma$
takes $\Gamma\cdot X_{{\rm ss},K}$ into $C$, 
and since $C$ is closed that means that the 
composition in fact takes all of $V_{{\rm ss},\Gamma} \cap \overline{\Gamma\cdot X}$ into $C$.
Therefore, we can find $z$ in $\mathbb{W}(X)_{{\rm ss},K}/K$ with $\phi(z) = \eta_{V,\Gamma}(\xi_V(x))$.
Tracing back through the definitions, we see that $\phi(z) = \eta_{V,\Gamma}(\xi_V(y))$ for some $y \in X_{{\rm ss},K}$.
It follows that $\eta_{V,\Gamma}(\xi_V(x))=\eta_{V,\Gamma}(\xi_V(y))$;
we claim that in fact $\pi_{V,\Gamma}(x) = \pi_{V,\Gamma}(c\cdot y)$ for some 
$c \in k^*$. 
Note that suffices to finish the proof, since in particular $y \in X$, so setting $x'=c\cdot y \in X$ gives us what we want.

Both points $x$ and $y$ lie in $V_{{\rm ss},\Gamma}$, so there are homogeneous generators $f_i,f_j\in R$ for which $f_i(x)\neq 0$ and $f_j(y) \neq 0$.
By definition of $m$, there are $m_i,m_j\in \NN$ such that $f_i^{m_i}$ and $f_j^{m_j}$ both have degree $m$.
Taking a suitable linear combination of $f_i^{m_i}$ and $f_j^{m_j}$, we can therefore find a homogeneous $f \in R$ of degree $m$ for which $f(x)\neq0\neq f(y)$.
Now we can choose $c\in k^*$ such that $f(x) = f(c\cdot y)$.

Let $f' \in S$ be non-constant and homogeneous; as previously observed, $f' \in S$ means that $\deg(f') = rm'$ for some $r \in \NN$. Then $(f')^{p^\alpha}$ has degree $rm$, so $\frac{(f')^{p^\alpha}}{f^r}$ has degree $0$ in the localization $R_f$.
Further, since $x$ and $c\cdot y$ have the same image in 
$(\mathbb{W}(V)_{{\rm ss},\Gamma}/\Gamma)_f$, we have $\frac{(f')^{p^\alpha}}{f^r}(x) = \frac{(f')^{p^\alpha}}{f^r}(c\cdot y)$.
Since $f(x) = f(c\cdot y)\neq 0$, this in turn implies that
$(f')^{p^\alpha}(x) = f'^{p^\alpha}(c\cdot y)$, and hence $f'(x) = f'(c\cdot y)$.
Since $S$ is generated by homogeneous elements, we see that $\pi_{V,\Gamma}(x) = \pi_{V,\Gamma}(c\cdot y)$, as required. This finishes the proof. 
\end{proof}

\begin{proof}[Proof of Theorem~\ref{thm:closed}]
  We can choose a $G$-equivariant closed embedding of $X$ in a $G$-module $V$.  Let $v\in V^H$ such that $v$ is $G$-unstable.  By the argument in the proof of Proposition \ref{prop:htog}, there exists $\lambda\in Y(N_G(H))$ such that $\lim_{a\to 0} \lambda(a)\cdot v= 0$.
  This shows that $(V^H)_{{\rm ss},N_G(H)}\subseteq V_{{\rm ss},G}$.
  The $G$-action commutes with the natural $k^*$-action by scalars, and this preserves the subspace $V^H$ also,
  so Proposition \ref{prop:closedstrong} implies that the map $\psi_{V,H}:V^H/N_G(H)\to V/G$ has closed image.
  
  Now $\overline{G\cdot X^H}\cap V^H\subseteq X\cap V^H= X^H$, so $\overline{G\cdot X^H}\cap V^H= X^H$.  
  Let $x\in \overline{G\cdot X^H}$ such that $G\cdot x$ is closed. Then, since $x\in \overline{G\cdot V^H}$ and $\psi_{V,H}$ has closed image, Lemma \ref{lem:closedcrit} implies $x\in G\cdot V^H$, so we can write $x=g\cdot v$ for some $v\in V^H$.
  But then $v \in \overline{G\cdot X^H}\cap V^H =X^H$, so $x\in G\cdot X^H$ and we are done by Lemma \ref{lem:closedcrit}.
  \end{proof}

\begin{rem}
For the proof of Theorem \ref{thm:closed}, we only need to apply Proposition~\ref{prop:closedstrong} when the $k^*$-action is the standard action by scalars, so the weighted projectivization is the usual projectivization in this case. However, we do need Proposition~\ref{prop:closedstrong} in this more general set-up to complete the proof of Theorem~\ref{thm:mainthm} in the next section.
\end{rem}

\begin{ex}
 Let $G$ act on $X:= G$ by conjugation and let $H$ be a maximal torus of $G$.  Assume $G$ is connected.  Then $X^H= H$.  Since the closed orbits in $X$ are precisely the semisimple conjugacy classes \cite{SS}, the map
 $\psi_{X,H}\colon X^H/N_G(H)\ra X/G$ is surjective---in fact, it is well known that $\psi_{X,H}$ is an isomorphism (cf.\ Section~\ref{sec:sep}).  Note, however, that although $G\cdot X^H$ is dense in $X$, not every element of $X$ belongs to $G\cdot H$ (just take $x\in X$ not semisimple).
\end{ex}


\section{Proof of Theorem \ref{thm:mainthm}, Part 3: finiteness}
\label{sec:lunapartthree}

We now complete the proof of Theorem~\ref{thm:mainthm}.  The implication (ii)$\implies$(i) follows from Theorem~\ref{thm:quasifinite}, so it remains to show that if $H$ is $G$-completely reducible then the morphism $\psi_{X,H}$ is finite.
By Lemma~\ref{lem:insep}, we can replace $X$ with a larger affine $G$-variety, hence without loss we can assume that $X$ is a $G$-module.
 
  Let $G_1$ be the subgroup of $G$ generated by $G^0$ and $H$.  The inclusion of $X^H$ in $X$ gives rise to a morphism $\psi^1_{X,H}\colon X^H/N_{G_1}(H)\ra X/G_1$.  We have a commutative diagram
 
 \begin{equation}
 \label{eqn:G1diag}
   \xymatrixcolsep{5pc}\xymatrix{
X^H/N_{G_1}(H)\ar[d]_{}\ar[r]^{\psi^1_{X,H}}& X/G_1\ar[d]^{}\\
X^H/N_G(H)\ar[r]^{\psi_{X,H}}&X/G\\}
 \end{equation}

 \noindent where the vertical arrows are the obvious maps.  We may identify $X/G$ with the quotient of $X/G^0$ by the finite group $G/G^0$, so the map $X/G^0\ra X/G$ is finite.  This map factorizes as $X/G^0\ra X/G_1\ra X/G$, so the map $X/G_1\ra X/G$ is finite by Remark~\ref{rem:fin_fact}(ii).  Likewise, the map $X^H/N_{G_1}(H)\ra X^H/N_G(H)$ is finite.  Hence both of the vertical maps in (\ref{eqn:G1diag}) are finite and surjective. Now using Remark~\ref{rem:fin_fact}(ii) we see it is enough to show that $\psi^1_{X,H}$ is finite. So it is enough to prove that $\psi_{X,H}$ is finite under the assumption that $G= G_1$.

 Let $Y$ and $U\subseteq Y^H$ be as in Lemma~\ref{lem:goodmodule} and set $V= X\oplus Y$.  We have a $G$-equivariant closed embedding of $X$ in $V$ given by $x\mapsto (x,0)$.  Let $W= \overline{G\cdot V^H}$; then $W^H= V^H$.  Note that $G\cdot V^H= G^0\cdot V^H$ by our assumption that $G= G_1$, so $W$ is irreducible.  By Lemma~\ref{lem:insep} again, it is enough to prove that $\psi_{W,H}$ is finite.

 The subset $X^H\times U$ of $X^H\oplus Y^H= V^H= W^H$ is open and dense, and $G_w= H$ for $w\in X^H\times U$.  Next we claim that $W$ has good dimension (for the $G$-action).  To see this, let $y_0\in U$ and set $w_0= (0,y_0)$.  Then $G\cdot w_0$ is a $G$-orbit of maximal dimension in $W$, and $G\cdot w_0$ is closed (as $G\cdot y_0$ is, by Lemma~\ref{lem:goodmodule}), so $w_0$ is a stable point of $W$ for the $G$-action.  The claim now follows from Remark~\ref{rem:stableopen}.  By a similar argument, $W^H$ has good dimension for the $N_G(H)$-action.
 Now since the stable points form an open subset, we can conclude that $G\cdot w$ and $N_G(H)\cdot w$ are closed for generic $w\in W^H$, and it follows from Remark~\ref{rem:singletons} that generic fibres of $\psi_{W,H}$ are singletons.

 Now consider the normalization $\widetilde{W}$ of $W$. 
 Since the normalization map $\nu_W\colon \widetilde{W}\to W$ is birational, 
 $\widetilde{W}$ contains an open dense subvariety $\widetilde{O}$ such that the map $\widetilde{O}\to W$ 
 is an isomorphism onto its image $O$, and $O$ is open in $W$.   
 We can take $\widetilde{O}$ and $O$ to be $G$-stable, so the latter meets $G\cdot W^H$.  
 Now $G\cdot W^H$ is constructible and dense in $W$, so it contains a nonempty open subset of $W$.  
 Hence, by adjusting $O$ and $\widetilde{O}$ if necessary, we can assume that $O\subseteq G\cdot W^H$
 and $O$ is $N_G(H)$-stable.  
 Since $\nu_W$ is $G$-equivariant, we get an isomorphism from $\widetilde{O}^H$ onto $O^H$.  
 Let $C$ be the closure in $\widetilde{W}$ of $\widetilde{O}^H$; then $C\subseteq \widetilde{W}^H$ and $C$ is $N_G(H)$-stable.
 Further, since the open subset $\widetilde{O}^H$ in $C$ is isomorphic to the open subset $O^H$ in the irreducible set $W^H$, $C$ is irreducible; it follows from Zariski's Main Theorem that $C$ is isomorphic to $W^H$.
 Hence $C\iso W^H=V^H$ carries a vector space structure, and we can identify a point $0_C\in C$ corresponding to the zero $0_V$; we have $\nu_W(0_C)= 0_V$ by construction.
 Furthermore, the action of $k^*$ on $W$ by scalar multiplication lifts to an action of $k^*$ on $\widetilde{W}$ which preserves the closed subset $C$.
 
 We want to apply Proposition \ref{prop:closedstrong} to deduce that the map $C/N_G(H) \to \widetilde{W}/G$ has closed image (note that we cannot use Theorem~\ref{thm:closed} directly because $C$ might be properly contained in $\widetilde{W}^H$).
 In order to do this, we choose a $(G\times k^*)$-equivariant embedding $i$ of $\widetilde{W}$ in a vector space $M$ such that $0_C$ maps to the zero $0_M\in M$.  (For instance, choose $f_1,\ldots, f_s\in k[\widetilde{W}]$ for some $s$ such that the $f_i$ generate $k[\widetilde{W}]$ as a $k$-algebra and $f_1(0_C)=\cdots = f_s(0_C)= 0$; we can take $M$ to be the dual of $N$, where $N$ is a $(G\times k^*)$-stable subspace of $k[\widetilde{W}]$ containing all the $f_i$.)  Replacing $M$ with the subspace spanned by $i(\widetilde{W})$, we can assume that $i(\widetilde{W})$ spans $M$.
 
 Let $\lambda_0\colon k^*\ra k^*$ be the identity cocharacter of $k^*$.  Now $\{0_V\}$ is the unique closed $k^*$-orbit in $W$, and each element of $W$ is destabilized to $0_V$ by $\lambda_0$.  It follows from Lemma~\ref{lem:finitevis}(i) that $\{0_C\}$ is the unique closed $k^*$-orbit in $\widetilde{W}$.  Let $0_C\neq \widetilde{w}\in \widetilde{W}$.  The Hilbert-Mumford Theorem implies that $\lim_{a\to 0} \lambda_0(a)\cdot \widetilde{w}= 0_C$ or $\lim_{a\to 0} (-\lambda_0)(a)\cdot \widetilde{w}= 0_C$.  In particular, $k^*$ does not fix $\widetilde{w}$, so $k^*$ does not fix $\nu_W(\widetilde{w})$, so $\nu_W(\widetilde{w})\neq 0_V$. Suppose $\lim_{a\to 0} (-\lambda_0)(a)\cdot \widetilde{w}= 0_C$.  Then $\lim_{a\to 0} (-\lambda_0)(a)\cdot \nu_W(\widetilde{w})= \nu_W(0_C)= 0_V$.  But this is impossible because $\lim_{a\to 0} \lambda_0(a)\cdot \nu_W(\widetilde{w})= 0_V$ and $\nu_W(\widetilde{w})\neq 0_V$.  We deduce that $\lim_{a\to 0} \lambda_0(a)\cdot \widetilde{w}= 0_C$.  Hence, we can conclude that $k^*$ acts on $M$ with positive weights.
  Further,  $C$ is a closed $(N_G(H)\times k^*)$-stable subset of $M$. 
 By the same argument as in the proof of Proposition \ref{prop:htog}, if $c \in C$ is $G$-unstable, then since $c$ is $H$-fixed and $H$ is $G$-cr, $c$ is also $N_G(H)$-unstable. 
 Hence $C_{{\rm ss},N_G(H)} \subseteq M_{{\rm ss},G}$.
 Thus we can now apply Proposition \ref{prop:closedstrong} to deduce that $C/N_G(H)$ has closed image in $M/G$.  Since $i_G\colon \widetilde{W}/G\to M/G$ is injective (Lemma~\ref{lem:insep}), we deduce that $C/N_G(H)$ has closed image in $ \widetilde{W}/G$, as we wanted.  This allows us to draw the following commutative diagram:
 
  $$\xymatrixcolsep{5pc}\xymatrix{
C/N_G(H)\ar[d]_{}\ar[r]^{\psi_{\widetilde{W},H}}& \widetilde{W}/G\ar[d]^{(\nu_W)_G}\\
W^H/N_G(H)\ar[r]^{\psi_{W,H}}& W/G\\}$$\\

\noindent where by abuse of notation we denote the restriction of $\psi_{\widetilde{W},H}$ to $C/N_G(H)$ by the same symbol.
The leftmost vertical arrow is the isomorphism induced by the isomorphism $C\iso W^H$ above.  
The other vertical map is finite (Proposition~\ref{prop:finquotfin}) and birational (Lemma~\ref{lem:finitebirat}; recall that $W$ has good dimension).  By Theorem~\ref{thm:closed}, $\psi_{W,H}$ has closed image, and we have just argued that $\psi_{\widetilde{W},H}\left(C/N_G(H)\right)$ is closed.  
But $W= \ovl{G\cdot W^H}$, so $\psi_{W,H}$ is surjective, and it follows that $\psi_{\widetilde{W},H}\left(C/N_G(H)\right) = \widetilde{W}/G$.  
Since $\psi_{W,H}$ is quasi-finite (Theorem~\ref{thm:quasifinite}) and has singletons as generic fibres, the same is true of $\psi_{\widetilde{W},H}$.
 As $\widetilde{W}/G$ is normal, it follows from Proposition~\ref{prop:insep_ZMT} that $\psi_{\widetilde{W},H}$ is finite and bijective. 
This implies that $(\nu_W)_G\circ \psi_{\widetilde{W},H}$ is finite. Since the leftmost vertical arrow is an isomorphism, we have that $\psi_{W,H}$ is finite,
 as required.
This completes the proof of Theorem~\ref{thm:mainthm}.


\section{Separability of $\psi_{X,H}$}
\label{sec:sep}

We now consider the question of when $\psi_{X,H}$ is an isomorphism, or close to being one.  Before we state our result, we need some terminology.

\begin{defn}
 Let $H$ be a subgroup of $G$.  We say that $H$ is a {\em principal stabilizer} for the $G$-variety $X$ if there exists a nonempty open subset $U$ of $X$ such that $G_x$ is $G$-conjugate to $H$ for all $x\in U$.  We say that $H$ is a {\em principal connected stabilizer} for the $G$-variety $X$ if $H$ is connected and there exists a nonempty open subset $U$ of $X$ such that $G_x^0$ is $G$-conjugate to $H$ for all $x\in U$.  It is immediate that if $G$ permutes the irreducible components of $X$ transitively then a principal stabilizer (resp., principal connected stabilizer) is unique up to conjugacy, if one exists.
\end{defn}

In characteristic 0, principal stabilizers exist under mild hypotheses: for instance, if $X$ is smooth \cite[Prop.\ 5.3]{rich_princ} or if $X$ has good dimension \cite[Lem.\ 3.4]{LuRi}.
For a counterexample in positive characteristic, see Example~\ref{ex:noslice}.

\begin{thm}
\label{thm:normaliso}
 Let $X$ be an affine $G$-variety.  Suppose that: (a) $H$ is a principal stabilizer for $X_{\rm cl}$; (b) $H$ is $G$-cr; (c) $X/G$ and $X^H/N_G(H)$ are irreducible; and (d) $X/G$ is normal.  Then $\psi_{X,H}$ is finite and bijective.  In particular, if $\psi_{X,H}$ is separable then it is an isomorphism.
\end{thm}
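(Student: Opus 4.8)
The plan is to derive everything from three ingredients already in place: the finiteness half of Theorem~\ref{thm:mainthm}, the inseparable form of Zariski's Main Theorem in Proposition~\ref{prop:insep_ZMT}, and the singleton-fibre observation of Remark~\ref{rem:singletons}. First, since $H$ is $G$-cr by hypothesis (b), the normalizer $N_G(H)$ is reductive \cite[Prop.\ 3.12]{BMR}, so $\psi_{X,H}$ is defined and is a finite morphism by Theorem~\ref{thm:mainthm}; in particular it is quasi-finite, and its image is closed by Theorem~\ref{thm:closed}. The remaining task is to show that $\psi_{X,H}$ is dominant with singleton generic fibres. Granting this, Proposition~\ref{prop:insep_ZMT} applies (its hypotheses hold because $X/G$ is normal by (d) and irreducible by (c), while $X^H/N_G(H)$ is irreducible by (c)) and yields that $\psi_{X,H}$ is a finite bijection onto an open subvariety of $X/G$ and is purely inseparable. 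Combining ``open image'' with the closedness of the image and the irreducibility of $X/G$ forces the image to be all of $X/G$, giving the asserted finite, bijective, purely inseparable morphism.

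The heart of the argument is producing a dense open subset of $X/G$ over which $\psi_{X,H}$ has singleton fibres. Using (a), I would take the open set $U\subseteq X_{\rm cl}$ on which $G_x$ is $G$-conjugate to $H$; since this condition is $G$-invariant, one may replace $U$ by its $G$-saturation and assume $U$ is $G$-stable, so that $Z:= X_{\rm cl}\setminus U$ is a closed, $G$-stable, proper subvariety of $X$. By Lemma~\ref{lem:insep} the set $\pi_{X,G}(Z)$ is closed in $X/G$. The key point is that it is proper: the closed orbits are dense in $X_{\rm cl}$ by definition, so $U$ contains a closed orbit $O$, and by property (iv) of $\pi_{X,G}$ the point $\pi_{X,G}(O)$ cannot lie in $\pi_{X,G}(Z)$ (otherwise the unique closed orbit $O$ over it would be forced into the closed $G$-stable set $Z$, contradicting $O\subseteq U$). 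Hence $\Omega:=(X/G)\setminus \pi_{X,G}(Z)$ is a nonempty, and therefore dense, open subset of the irreducible variety $X/G$. For $y\in\Omega$ the unique closed orbit over $y$ lies in $U$, so it has stabilizer conjugate to $H$; choosing $x$ in this orbit with $G_x=H$ (possible after conjugating, whence $x\in X^H$) shows simultaneously that $y$ lies in the image of $\psi_{X,H}$ and, by Remark~\ref{rem:singletons}, that $\psi_{X,H}^{-1}(y)$ is a single point. Thus $\psi_{X,H}$ is dominant and has singleton fibres over the dense open set $\Omega$.

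For the final sentence, if $\psi_{X,H}$ is moreover separable then, being also purely inseparable, it induces a trivial extension of function fields and is therefore birational; a finite birational morphism onto the normal variety $X/G$ is an isomorphism by Zariski's Main Theorem. The step I expect to be the main obstacle is the density argument of the second paragraph. The subtle point is that exhibiting one singleton fibre is \emph{not} enough, since a finite morphism can have isolated singleton fibres while its separable degree exceeds one; the genuine content is producing singleton fibres over an entire dense open subset, which requires the careful transfer of the principal-stabilizer condition from $X_{\rm cl}$ to the quotient $X/G$ via property (iv) of $\pi_{X,G}$ and Lemma~\ref{lem:insep}.
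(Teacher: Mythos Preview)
Your proof is correct and follows essentially the same route as the paper's own argument: apply Theorem~\ref{thm:mainthm} for finiteness, then feed dominance plus generic singleton fibres into Proposition~\ref{prop:insep_ZMT}, with Remark~\ref{rem:singletons} supplying the singleton fibres over points whose closed orbit has stabilizer exactly $H$. Your construction of the dense open set $\Omega=(X/G)\setminus\pi_{X,G}(Z)$ via Lemma~\ref{lem:insep} is a clean and explicit way of carrying the principal-stabilizer hypothesis down to the quotient; the paper compresses this step into the assertion that $\pi_{X,G}(G\cdot X^H)$ contains a nonempty open subset of $X/G$ and then invokes stable points, but the underlying mechanism is the same.
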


Observe that this result extends a theorem of Luna and Richardson \cite[Thm.\ 4.2]{LuRi} to positive characteristic; note that in characteristic 0, a reductive group $H$ is automatically $G$-cr, $\psi_{X,H}$ is automatically separable and principal stabilizers exist, as noted above.

\begin{proof}
By Theorem~\ref{thm:mainthm}, $\psi_{X,H}$ is finite, so its fibres are finite.  To prove the first assertion of the theorem it is enough, therefore, by Proposition~\ref{prop:insep_ZMT} to show that $\psi_{X,H}$ is surjective and generic fibres of $\psi_{X,H}$ are singletons.  By hypothesis, $G\cdot X^H$ contains a nonempty open subset of $X_{\rm cl}$. The assumption that $X/G$ is irreducible implies that the action of $G$ is transitive on the irreducible components of $X_{\rm cl}$ so we can conclude that $\pi_{X,G}(X^H)= \pi_{X,G}(G\cdot X^H)$ contains a nonempty open subset of $X/G$.  As $X/G$ is irreducible, $\psi_{X,H}(X^H/N_G(H))= X/G$.  If $x$ is a stable point of $X_{\rm cl}$ and $G_x= H$ then $\psi_{X,H}^{-1}(\psi_{X,H}(\pi_{X^H,N_G(H)}(x)))$ is a singleton, by Remark~\ref{rem:singletons}.
 This proves the first assertion as the set of conjugates of such $x$ is open in $X_{\rm cl}$.  If $\psi_{X,H}$ is separable then the second assertion follows from Zariski's Main Theorem, as $X/G$ is normal.
\end{proof}

\begin{rem} 
 The assertion of Theorem~\ref{thm:normaliso} also holds by a similar argument if we replace the hypothesis that $H$ is a principal stabilizer for $X_{\rm cl}$ with the hypothesis that $H$ is a principal connected stabilizer for $X_{\rm cl}$.
\end{rem}

Next we study the separability condition.  To simplify the arguments below, we consider only the case when $X$ has good dimension for the $G$-action.

\begin{lem}
\label{lem:sepcrit}
 Suppose an affine $G$-variety $X$ has good dimension and hypotheses (a)--(c) of Theorem~\ref{thm:normaliso} hold.  Then $\psi_{X,H}$ is separable if and only if for generic $x\in X^H$, $T_x(G\cdot x)\cap T_xX^H= T_x(N_G(H)\cdot x)$.
\end{lem}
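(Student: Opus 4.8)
The plan is to test separability through the differential criterion for dominant morphisms: a dominant morphism of irreducible varieties over the (perfect) field $k$ is separable if and only if its differential is surjective at a generic point of the source. Let $i\colon X^H\ra X$ be the inclusion, write $N = N_G(H)$, and set $\rho = \pi_{X,G}\circ i\colon X^H\ra X/G$, so that $\rho = \psi_{X,H}\circ \pi_{X^H,N}$. First I would record the generic picture. After reducing to the case where $X$ and $X^H$ are irreducible (restricting to the components that dominate $X/G$ and $X^H/N$), hypotheses (a)--(c) together with good dimension give, for generic $x\in X^H$: $G_x = N_x = H$; both $G\cdot x$ and $N\cdot x$ are closed; $X$ has good dimension for $G$ and $X^H$ has good dimension for $N$ (via Proposition~\ref{prop:htog}); and, by Lemma~\ref{lem:fnfldquot}, the quotient maps $\pi_{X,G}$ and $\pi_{X^H,N}$ are separable. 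As in the proof of Theorem~\ref{thm:normaliso}, hypothesis (a) forces $\psi_{X,H}$ to be surjective, so $\rho$ is dominant.

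Since $\pi_{X^H,N}$ is separable and dominant and $k(X^H/N)/k(X/G)$ is finite (Theorem~\ref{thm:mainthm}), the tower $k(X/G)\sse k(X^H/N)\sse k(X^H)$ shows that $\psi_{X,H}$ is separable if and only if $\rho$ is separable. By the differential criterion, $\rho$ is separable if and only if $d_x\rho$ is surjective for generic $x\in X^H$. Because $\rho$ is constant on $N$-orbits and $N\cdot x\sse X^H$, one always has $T_x(N\cdot x)\sse \ker(d_x\rho)$, while good dimension of $X^H$ for $N$ (together with $\dim(X^H/N) = \dim(X/G)$) gives $\dim(X^H) - \dim(X/G) = \dim(N\cdot x) = \dim T_x(N\cdot x)$; hence $d_x\rho$ is surjective precisely when $\ker(d_x\rho) = T_x(N\cdot x)$. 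Since $i$ is a closed embedding, $\ker(d_x\rho) = T_xX^H\cap \ker(d_x\pi_{X,G})$, so the whole question reduces to identifying $\ker(d_x\pi_{X,G})$ along $X^H$.

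The key step --- which I expect to be the main obstacle --- is to prove that $\ker(d_x\pi_{X,G}) = T_x(G\cdot x)$ for generic $x\in X^H$. One always has $T_x(G\cdot x)\sse \ker(d_x\pi_{X,G})$, but the reverse inclusion is exactly where positive characteristic bites: a priori the fibre of $\pi_{X,G}$ through a point of $X^H$ could be non-reduced, making $\ker(d_x\pi_{X,G})$ strictly larger than $T_x(G\cdot x)$, and one cannot apply generic smoothness directly at $x$ since $x$ is a special point of $X$. The trick is that although $x$ is special in $X$, its image $\pi_{X,G}(x)$ is a \emph{generic} point of $X/G$: as $\rho$ is dominant, $\rho^{-1}(V_0)$ is dense in $X^H$ for any dense open $V_0\sse X/G$. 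Choosing $V_0$ over which $\pi_{X,G}$ is smooth (generic smoothness, available because $\pi_{X,G}$ is separable) and over which every fibre is a single closed $G$-orbit (Remark~\ref{rem:stableopen}), one finds that for generic $x\in X^H$ the map $\pi_{X,G}$ is smooth at $x$ with reduced fibre $\pi_{X,G}^{-1}(\pi_{X,G}(x)) = G\cdot x$; hence $\ker(d_x\pi_{X,G}) = T_x(G\cdot x)$, as required.

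Combining the last two paragraphs, for generic $x\in X^H$ we obtain $\ker(d_x\rho) = T_xX^H\cap T_x(G\cdot x)$, and therefore $\psi_{X,H}$ is separable if and only if $T_x(G\cdot x)\cap T_xX^H = T_x(N\cdot x)$ for generic $x$, which is the assertion. The only points needing additional care are the genericity reductions of the first paragraph and the verification that $X^H$ has good dimension for $N_G(H)$; both are routine given Proposition~\ref{prop:htog} and the surjectivity of $\psi_{X,H}$ established in the proof of Theorem~\ref{thm:normaliso}.
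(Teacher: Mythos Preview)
Your argument is correct and follows essentially the same route as the paper's proof: both reduce to showing that for generic $x\in X^H$ the differential $d_x\pi_{X,G}$ is surjective with kernel $T_x(G\cdot x)$ and $d_x\pi_{X^H,N}$ is surjective with kernel $T_x(N\cdot x)$, and then read off the separability of $\psi_{X,H}$ from the factorization $\pi_{X,G}|_{X^H}=\psi_{X,H}\circ\pi_{X^H,N}$. Your packaging via the composite $\rho$ and the field tower $k(X/G)\sse k(X^H/N)\sse k(X^H)$ is a minor stylistic variation; the paper makes the same inference directly at the level of differentials, and your observation that generic $x\in X^H$ maps to a generic point of $X/G$ (since $\psi_{X,H}$ is dominant) is exactly the point the paper is using when it asserts surjectivity of $d_x\pi_{X,G}$ at such $x$.
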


\begin{proof}
Clearly $T_x(G\cdot x)\cap T_xX^H\supseteq T_x(N_G(H)\cdot x)$, so the content here is in the reverse inclusion.  
First we claim that $X^H$ has good dimension for the $N_G(H)$-action.  To see this, observe that $\ovl{G\cdot X^H}= X$ by the surjectivity assertion of Theorem~\ref{thm:normaliso} (which does not depend on hypothesis (d)), so every closed $G$-orbit in $X$ meets $X^H$ by Lemma~\ref{lem:closedcrit}.  As $H$ is a principal stabilizer for $X$, we must have $G_x= H$ for generic $x\in X^H$, and it follows from Proposition~\ref{prop:htog} that generic $N_G(H)$-orbits in $X^H$ are closed, as required.  We now see from Remark~\ref{rem:stableopen} that 
 \begin{equation}
 \label{eqn:orbfibre}
  \pi_{X,G}^{-1}(\pi_{X,G}(x))= G\cdot x\ \mbox{and}\ \ \pi_{X^H,N_G(H)}^{-1}(\pi_{X^H,N_G(H)}(x))= N_G(H)\cdot x
 \end{equation}
 for generic $x\in X^H$.  Now $\pi_{X,G}$ and $\pi_{X^H,N_G(H)}$ are separable (Lemma~\ref{lem:fnfldquot}), and it follows from this and from Eqn.\ (\ref{eqn:orbfibre}) that for generic $x\in X^H$, $d_x\pi_{X,G}$ is surjective at $x$ with kernel $T_x(G\cdot x)$ and $d_x\pi_{X^H,N_G(H)}$ is surjective at $x$ with kernel $T_x(N_G(H)\cdot x)$.
 
 The map $\psi_{X,H}$ is surjective and finite (by Theorem~\ref{thm:mainthm}), so it is separable if and only its derivative is an isomorphism for generic points in $X^H/N_G(H)$.  The result now follows from the argument above. 
\end{proof}

Recall that a pair $(G,H)$ of reductive groups with $H\leq G$ is called a \emph{reductive pair} if $\hh = \Lie(H)$ splits off as a direct $H$-module summand of $\Gg=\Lie(G)$,
where $H$ acts via the adjoint action of $G$ on $\Gg$,
and a subgroup $A \leq G$ is called \emph{separable in $G$} if
$$
\Lie(C_G(A)) = c_{\Gg}(A) := \{X \in \Gg \mid \Ad_G(a)(X) = X \textrm{ for all } a \in A\}.
$$

\begin{prop}
 Suppose an affine $G$-variety $X$ has good dimension and hypotheses (a)--(c) of Theorem~\ref{thm:normaliso} hold.  Suppose one of the following holds:
 \begin{itemize}
\item[(i)] there exists $x\in X$ such that $G_x= H$ and there is an \'etale slice through $x$ for the $G$-action;
\item[(ii)] $H$ is separable in $G$, $(G,H)$ is a reductive pair and there exists $x\in X$ such that $G_x= H$ and $G\cdot x$ is separable.
\end{itemize}
Then $\psi_{X,H}$ is separable.
\end{prop}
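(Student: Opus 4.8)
The plan is to verify the infinitesimal criterion of Lemma~\ref{lem:sepcrit}: since the inclusion $T_x(G\cdot x)\cap T_xX^H\supseteq T_x(N_G(H)\cdot x)$ always holds, it suffices to establish the reverse inclusion for generic $x\in X^H$. First I would reduce to a point with good local structure. Recall from the proof of Lemma~\ref{lem:sepcrit} that $G_x=H$ for generic $x\in X^H$; I claim one may in addition take $G\cdot x$ separable. Under hypothesis (ii) a single such separable orbit exists by assumption, and since $G_x=H$ forces $x\in X^H$, the orbit $G\cdot x$ is separable exactly when $\dim\ker(d_1\kappa_x)=\dim H$. Lemma~\ref{lem:semicontinuity} (applied with $r=2\dim H+1$) then shows that separability fails only on a closed subset of the open locus $\{x\in X^H\mid G_x=H\}$, so $G\cdot x$ is separable for generic $x$. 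Under hypothesis (i) an \'etale slice through $x$ forces $G\cdot x$ to be separable (as noted in \S\ref{subsec:slices}), and I would use Proposition~\ref{prop:locstab} together with the principal-stabilizer hypothesis to arrange such a slice at a generic point with $G_x=H$.

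With $x$ fixed so that $G_x=H$ and $G\cdot x$ is separable, the core computation is common to both cases. The differential $d_1\kappa_x\colon\Gg\to T_xX$ is $H$-equivariant (conjugate by $h\in H$ and use $h\cdot x=x$), has image $T_x(G\cdot x)$ and kernel $\hh$, so it induces an $H$-module isomorphism $\Gg/\hh\iso T_x(G\cdot x)$. Since $T_x(X^H)\subseteq(T_xX)^H$ and $T_x(G\cdot x)$ is an $H$-submodule of $T_xX$, any $v\in T_x(G\cdot x)\cap T_xX^H$ already lies in $(T_x(G\cdot x))^H$. Thus it is enough to prove $(T_x(G\cdot x))^H\subseteq T_x(N_G(H)\cdot x)$, which under the isomorphism above amounts to showing that $(\Gg/\hh)^H$ is contained in the image of $\Lie(N_G(H))$ in $\Gg/\hh$.

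This last inclusion is where the remaining hypotheses of (ii) enter. A reductive pair gives an $H$-module splitting $\Gg=\hh\oplus\mathfrak m$, so the projection $\Gg^H\to(\Gg/\hh)^H$ is surjective; hence $(T_x(G\cdot x))^H=d_1\kappa_x(\Gg^H)$. Separability of $H$ in $G$ identifies $\Gg^H=c_\Gg(H)$ with $\Lie(C_G(H))$, and since $C_G(H)\subseteq N_G(H)$ we obtain $d_1\kappa_x(\Gg^H)=d_1\kappa_x(\Lie(C_G(H)))\subseteq T_x(N_G(H)\cdot x)$. Combining these gives the desired inclusion, and Lemma~\ref{lem:sepcrit} yields separability of $\psi_{X,H}$. (Here $N_G(H)^0=H^0C_G(H)^0$ ensures $\Lie(N_G(H))=\hh+\Lie(C_G(H))$, but only the trivial containment $\Lie(C_G(H))\subseteq\Lie(N_G(H))$ is actually used.)

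For hypothesis (i) I would instead extract the same inclusion from the \'etale slice. Writing $S$ for the slice at a generic $x$ with $G_x=H$, transversality gives an $H$-module decomposition $T_xX=T_x(G\cdot x)\oplus T_xS$; because $H$ is the principal stabilizer, the generic point lying over $S$ has stabilizer $H$, which forces $H$ to act trivially on $S$, so that $S\subseteq X^H$ near $x$ and $T_xS\subseteq T_x(X^H)$. The slice then identifies $\psi_{X,H}$ \'etale-locally with the Luna--Richardson comparison map of the (trivial) slice representation, which is an isomorphism; since separability is \'etale-local, $\psi_{X,H}$ is separable. I expect the main obstacle to be precisely this final point: checking in positive characteristic that a $G$-\'etale slice restricts to an $N_G(H)$-\'etale slice on $X^H$, so that the two quotients are genuinely identified \'etale-locally. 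By contrast, the tangent-space bookkeeping of case (ii) is routine once the reductive-pair splitting and the separability of $H$ in $G$ are in hand; the delicate common thread is ensuring that the generic point can be chosen with $G_x=H$, with $G\cdot x$ separable, and (for (i)) carrying an \'etale slice.
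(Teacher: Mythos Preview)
Your treatment of case (ii) is essentially the paper's argument and is correct: the reductive-pair splitting makes $\Gg^H\to(\Gg/\hh)^H$ surjective, separability of $H$ in $G$ identifies $\Gg^H$ with $\Lie(C_G(H))\subseteq\Lie(N_G(H))$, and Lemma~\ref{lem:semicontinuity} propagates separability of $G\cdot x$ to generic $x\in X^H$.

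For case (i), the paper takes a shorter and cleaner route than the one you sketch, and in particular it sidesteps the obstacle you flag (whether a $G$-\'etale slice restricts to an $N_G(H)$-\'etale slice on $X^H$). The key point you are missing is that part of the \emph{definition} of an \'etale slice $S$ through $x$ (in the Bardsley--Richardson sense) is that the induced map $(S\cap U)/G_x\to X/G$ is \'etale for a suitable open $U$. The principal-stabilizer hypothesis forces $G_y=H$ for every $y\in S\cap U$ (in particular $G_x=H$, so you do not need to ``move'' the slice to a generic point), whence $H$ acts trivially on $S\cap U$ and $S\cap U\subseteq X^H$; the \'etale map above is then simply $S\cap U\to X/G$. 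This factors as
\[
S\cap U\ \hookrightarrow\ X^H\ \longrightarrow\ X^H/N_G(H)\ \xrightarrow{\ \psi_{X,H}\ }\ X/G.
\]
Now choose $y\in S\cap U$ with both $\pi_{X^H,N_G(H)}(y)$ and $\pi_{X,G}(y)$ smooth (the paper checks that $\pi_{X^H,N_G(H)}(S\cap U)$ contains a nonempty open set of $X^H/N_G(H)$, using that $X^H\cap G\cdot(S\cap U)=N_G(H)\cdot(S\cap U)$). \'Etaleness of the composite at $y$ forces $d\psi_{X,H}$ to be surjective at $\pi_{X^H,N_G(H)}(y)$; since $\psi_{X,H}$ is finite and dominant between irreducible varieties and both points are smooth, this gives separability. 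No comparison of slice structures on $X$ and $X^H$ is needed.
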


\begin{proof}
 By the argument of Theorem~\ref{thm:normaliso}, $\psi_{X,H}$ is dominant.  Suppose first that (i) holds.  Let $x\in X$ with $G_x= H$ and let $S$ be an \'etale slice through $x$ for the $G$-action.  By the definition of \'etale slices and the proof of \cite[Prop.\ 8.6]{BaRi}, there exists a $G$-stable open neigbourhood $U$ of $x$ in $X$ such that $G_y\leq G_x$ for all $y\in S\cap U$ and the obvious maps $G\times (S\cap U)\ra X$ and $(S\cap U)/H\ra X/G$ are \'etale.  As $H$ is a principal stabilizer for $X$, we can assume after replacing $U$ with a smaller open set that $G_y$ is conjugate to $H$ for all $y\in S\cap U$.  We have $G_x= H$ by hypothesis, so it follows that $G_y= H$ for all $y\in S\cap U$.
 As the set of stable points of $X$ is $G$-stable, open and nonempty and the set of smooth points of $X/G$ is open and nonempty, there is a nonempty $G$-stable open subset $U_1$ of $U$ such that $G\cdot y$ is closed and $\pi_{X,G}(y)$ is a smooth point of $X/G$ for all $y\in U_1$.
 
 Since $G\cdot (S\cap U_1)$ is open and $S\cap U_1\subseteq X^H$, $G\cdot (S\cap U_1)$ contains a nonempty open subset of $X^H$.  Let $y'\in X^H\cap G\cdot (S\cap U_1)$: say, $y'= g\cdot y$ for some $y\in S\cap U_1$, $g\in G$.  Then $G_y= H$ and $G_{y'}$ is $G$-conjugate to $H$; but $y'\in X^H$, so $G_{y'}= H$.  It follows that $g\in N_G(H)$.  We deduce that $X^H\cap G\cdot (S\cap U_1)= N_G(H)\cdot (S\cap U_1)$.  So $\pi_{X^H,N_G(H)}(S\cap U_1)= \pi_{X^H,N_G(H)}(N_G(H)\cdot (S\cap U_1))$ contains a nonempty open subset of $X^H/N_G(H)$.
 
 So pick $y\in S\cap U_1$ such that $\pi_{X^H,N_G(H)}(y)$ is a smooth point of $X^H/N_G(H)$.  The map $(S\cap U)/H\ra X/G$ is \'etale, so its derivative is an isomorphism everywhere.  Hence the derivative of the map $X^H\ra X/G$ induced by $\pi_{X,G}$ is surjective at $y$.  This in turn implies that the derivative of $\psi_{X,H}$ is surjective at $\pi_{X^H,N_G(H)}(y)$.  But $\pi_{X^H,N_G(H)}(y)$ and $\pi_{X,G}(y)$ are smooth points by construction, so $\psi_{X,H}$ is separable.
 
 Now suppose that (ii) holds.  We argue along the lines of the proof of \cite[Thm.\ A]{rich1}.  Let ${\mathfrak d}$ be an $H$-module complement to $\hh$ in $\Gg$.  Let $X_0= \{x_1\in X\mid G_{x_1}= H \textrm{ and } G\cdot x_1 \textrm{ is closed and separable}\}$.  Let $x_1\in X_0$.  Then the orbit map $\kappa_{x_1}\colon G\ra G\cdot x_1$ gives an isomorphism $\phi\colon G/H\ra G\cdot x_1$.  In particular, the derivative $d_1\phi$ at $1\in G$ gives an isomorphism from $\Gg/\hh$ to the tangent space $T_{x_1}(G\cdot x_1)$, and it is easily checked that $d_1\phi$ is $H$-equivariant.  It follows that $d_1\kappa_{x_1}$ gives an isomorphism of $H$-modules from ${\mathfrak d}$ to $T_{x_1}(G\cdot x_1)$.  Now let $\beta\in T_{x_1}(G\cdot x_1)\cap T_{x_1}X^H$.  Then $\beta$ is fixed by $H$, so $\beta= d_1\kappa_{x_1}(\alpha)$ for some $\alpha\in {\mathfrak d}^H$.  As $H$ is separable in $G$, $\alpha\in {\rm Lie}(N_G(H))$.  Hence $\beta\in T_{x_1}(N_G(H)\cdot x_1)$.
 
 To finish, it is enough by Lemma~\ref{lem:sepcrit} to show that generic elements of $X^H$ belong to $X_0$.  As $H$ is a principal stabilizer for $X$ and $X$ has good dimension for the $G$-action, $G_{x_1}= H$ and $G\cdot x_1$ is closed for generic $x_1\in X^H$.  Now
 \begin{equation}
 \label{eqn:generic}
  {\rm dim}(G_{x_1})+ {\rm dim}({\rm ker}(d_1\kappa_{x_1}))\geq 2\,{\rm dim}(H)
 \end{equation}
 for all $x_1\in X^H$.  But equality holds in Eqn.\ (\ref{eqn:generic}) for $x_1= x$, so it holds for generic $x_1\in X^H$ by Lemma~\ref{lem:semicontinuity}.  This shows that $G\cdot x_1$ is separable for generic $x_1\in X^H$, so we are done.
\end{proof}

The following example shows that separability does not hold automatically under the hypotheses of Theorem~\ref{thm:normaliso}, not even when $X$ has good dimension.

\begin{ex}
 Let $G= \SL_p(k)$, where $k$ has characteristic $p$ and $p> 2$.  Let $e_1,\ldots,e_p$ be the standard basis vectors for the vector space $V:= k^p$ and let $B_0$ be the standard nondegenerate symmetric bilinear form on $k^p$ given by $B_0(e_i,e_j)= \delta_{ij}$.  Now let $Y$ be $S^2(V)^*$, the vector space of symmetric bilinear forms on $k^p$; then $G$ acts on $Y$ by $(g\cdot B)(v,w)= B(g^{-1}\cdot v, g^{-1}\cdot w)$.  If $B\in Y$ then $B$ is nondegenerate if and only if the $p\times p$ matrix with $i,j$-entry $B(e_i,e_j)$ has nonzero determinant, so the subvariety $X$ of nondegenerate forms is open and affine.  Moreover, $X$ has good dimension since the $G$-orbits on $X$ all have the same dimension.
 
 The stabilizer $G_{B_0}$ is the special orthogonal group $H:= {\rm SO}_p(k)$, and $H$ is $G$-cr as ${\rm char}(k)\neq 2$ (in fact, $H$ is contained in no proper parabolic subgroups of $G$, so $H$ is ``$G$-irreducible'').  It is easily seen that $X^H= \{cB_0\mid c\in k^*\}$ and $N_G(H)= H$; hence $N_G(H)$ acts trivially on $X^H$.  Moreover, $X= G\cdot X^H$.  Hence $H$ is a principal stabilizer and $X$ has good dimension for the action of $G$ on $X$. 
 
 Let $0\neq B= cB_0\in X^H$.  Define $\lambda\in Y(G)$ by $\lambda(a)= {\rm diag}(a^{-1},\dots, a^{-1}, a^{p-1})$ (the diagonal matrix with given entries with respect to the basis $e_1,\ldots, e_p$).  Let $B_1\in Y$ be the degenerate form given by $B_1(a_1e_1+ \cdots + a_pe_p, b_1e_1+ \cdots+ b_pe_p)= ca_pb_p$.  Then for all $a\in k^*$, $\lambda(a)\cdot B= a^2B+ (a^{2- 2p}- a^2)B_1$.  As $X$ is open in $Y$, we may identify $T_BX$ with $T_BY$.  Making the usual identification of the tangent spaces $T_1k^*$ and $T_BY$ with $k$ and $Y$, respectively, we see that
 $$ d_1\kappa_B(1)= 2B $$
(note that since ${\rm char}(k)= p$, we have $\displaystyle \left.\frac{{\rm d}}{{\rm d}a} (a^{{2- 2p}}- a^2)\right|_{a= 1}= 0$).  Now $d_1\kappa_B(1)$ belongs to $T_B(G\cdot B)$ and to $T_B(X^H)$, but not to $T_B(N_G(H)\cdot B)$ since the latter tangent space is zero.  It follows from Lemma~\ref{lem:sepcrit} that $\psi_{X,H}$ is not separable.
\end{ex}

\section{Examples}
\label{sec:ex}

The constructions in Lemma~\ref{lem:notGcrfixedpoint} demonstrate the failure of Theorem \ref{thm:mainthm} when the hypothesis of complete reducibility is removed.  In this section we provide some concrete and straightforward examples of this phenomenon.

\begin{ex}
\label{ex:sl2}
Let the characteristic be $2$ and let $\rho:\SL_2(k) \to \SL_3(k)$ be the
adjoint representation of $\SL_2(k)$.
Concretely, let
$$
e=\left(\begin{array}{cc} 0&1\\0&0 \end{array}\right),
h=\left(\begin{array}{cc} 1&0\\0&1 \end{array}\right),
f=\left(\begin{array}{cc} 0&0\\1&0 \end{array}\right)
$$
be the standard basis for $X:=\Lie(\SL_2(k))$ and let $\SL_2(k)$ act on $X$ by conjugation.
Then, with respect to this basis, we have
$$
\rho\left(\begin{array}{cc} a&b\\c&d \end{array}\right)
=
\left(\begin{array}{ccc} a^2&0&b^2\\ac&1&bd\\c^2&0&d^2 \end{array}\right).
$$
Let $H$ be the image of $\rho$ inside $G = \SL_3(k)$ with natural module $X$.
Then $H$ is reductive, but $H$ is not $G$-cr since the representation $\rho$ is not semisimple:
the $H$-fixed subspace of $X$ spanned by the vector $h$ has no $H$-stable complement.
Since $H$ is reductive, $N_G(H)^0 = H^0C_G(H)^0$.
Direct calculation shows that $C_G(H)$ is finite and hence
$N_G(H)/H$ is finite.
Now the vector $h$ is $H$-fixed but has a non-closed $G$-orbit,
since if we let $\lambda \in Y(G)$ be the cocharacter defined by
$$
\lambda(a) := \left(\begin{array}{ccc}1&0&0\\0&a &0\\0&0&a\inverse\end{array}\right)
$$
for each $a \in k^*$,
then $\lambda(a)\cdot h = ah$, so $\lim_{a\to 0} \lambda(a) \cdot h = 0$.
It is obvious that $0$ is not $G$-conjugate to $h$.
Note that the same reasoning works for any nonzero multiple of $h$.
On the other hand, the $N_G(H)$-orbit of any nonzero multiple of $h$ is finite (and hence closed),
and there are therefore infinitely many such closed $N_G(H)$-orbits.
Hence the fibre of $\psi_{X,H}$ over $\pi_{X,G}(0)$ is infinite.

Note that this example only works in characteristic $2$ because it relies on the existence of the $H$-fixed vector $h$.
This is consistent with the results above, since away from characteristic $2$ the image of the adjoint representation of $\SL_2(k)$ in $\SL_3(k)$
\emph{is} completely reducible---actually, it is irreducible---and hence is $\SL_3(k)$-cr.
\end{ex}

\begin{ex}\label{ex:harry1}
We now provide an infinite family of examples generalizing the previous one.  In these examples, $G$ is $\SL_m(k)$ acting on its natural module $X$, and $H$ is the image of some reductive group under a representation in $\SL(X)$.  
Since $G$ has only one closed orbit in $X$ (the orbit $\{0\}$), the quotient $X/G$ is just a single point.

First we consider polynomial representations of $\GL_n(k)$ where $k$ is an algebraically closed field of positive characteristic $p$. A good reference for  the polynomial representation theory of $\GL_n(k)$ is the monograph \cite{EGS}. Further details may also be found in the monograph \cite{D}. (To apply this here one should take $q=1$ in the set-up considered there.)

Let the characteristic be $p>0$ and let $G=\GL_n(k)$ be the group of $n\times n$-invertible matrices. The irreducible polynomial representations of $G$ are parametrized by partitions with at most $n$ parts. More precisely, let $\Lambda^+(n)$ be the set of partitions $\lambda=(\lambda_1,\dots,\lambda_n)$ with $\lambda_1\geq\cdots \geq \lambda_n\geq 0$. We may regard $\lambda$ as a weight of the standard maximal torus of $G$: we set $\lambda(t)= t_1^{\lambda_1}\dots t_n^{\lambda_n}$.  Then for each $\lambda\in\Lambda^+(n)$ there exists an irreducible polynomial $G$-module $L(\lambda)$  such that $L(\lambda)$ has unique highest weight $\lambda$ and 
$\lambda$ occurs as a weight with multiplicity one.
The modules $L(\lambda), \lambda\in \Lambda^+(n)$, form a complete set of pairwise non-isomorphic polynomial irreducible $G$-modules. We write $T$ for the maximal torus of $G$ consisting  of diagonal matrices and  $B$ for the subgroup of $G$ consisting of all invertible lower triangular matrices. We shall also need modules induced from $B$ to $G$. We denote by $k_\lambda$ the 1-dimensional rational $T$-module on which $t\in T$ acts as multiplication by $\lambda(t)$. The action of $T$ on $k_\lambda$ extends to an action of $B$. For each $\lambda\in\Lambda^+(n)$ the induced module $\nabla(\lambda):= \ind_B^Gk_\lambda$ is a non-zero polynomial representation of $G$.
Then $\nabla(\lambda)$ is finite-dimensional and contains the irreducible module $L(\lambda)$: in fact the $G$-socle of $\nabla(\lambda)$ is $L(\lambda)$.

We consider the induced $\GL_n(k)$-module $\nabla(n(p-1))$. We have that $\nabla(n(p-1))=S^{n(p-1)}E$, where $S^{n(p-1)}E$ is the $n(p-1)$th symmetric power of the natural $\GL_n(k)$-module $E$.

By \cite[Lem.\ 3.3]{DVD} and \cite[4.3, (10)]{D}, the $\GL_n(k)$-module $\nabla(n(p-1))$ has simple head $L(p-1,\dots,p-1)$, which is the 1-dimensional module obtained as the $(p-1)$th tensor power of the determinant module $D=L(1,\dots,1)$  of $\GL_n(k)$.
Now let $\Delta(n(p-1))$ be the Weyl module corresponding to the partition $(n(p-1))$. This is the contravariant dual of $\nabla(n(p-1))$. Since $\nabla(n(p-1))$ has simple head we get that  $\Delta(n(p-1))$ has simple socle; more precisely, 
$$
\soc_{\GL_n(k)}(\Delta(n(p-1)))=L(p-1,\dots,p-1)=D^{\otimes(p-1)}.
$$

Now consider $\Delta(n(p-1))$ as an $\SL_n(k)$-module in the usual way. 
As an $\SL_n(k)$-module, $\Delta(n(p-1))$ is the Weyl module corresponding to the dominant weight $(n(p-1))$ and by the considerations above we get that it has simple socle; in particular, $\soc_{\SL_n(k)}(\Delta(n(p-1)))=L(0)=k$ is the trivial $\SL_n(k)$-module.
Moreover, since $\Delta(n(p-1))$ is multiplicity-free as an $\SL_n(k)$-module we have that $L(0)$ appears as a composition factor of $\Delta(n(p-1))$ with multiplicity 1.

We consider the matrix representation obtained by the $\SL_n(k)$-module $\Delta(n(p-1))$. Hence we have a group homomorphism
$$\rho:\SL_n(k)\rightarrow \SL_m(k),$$
where $m=\dim (\Delta(n(p-1)))=\binom{np-1}{np-n}$.
Let $X=\Delta(n(p-1))$ and let $H$ be the image of $\rho$ inside $G= \SL_m(k)= \SL(X)$.
The previous reasoning shows that $X$ is an indecomposable $H$-module and the trivial module appears in the $H$-socle of $X$.
The group $H$ is reductive but not $G$-cr since the representation $X$ is not semisimple.

Since $H$ is reductive we have that $N_G(H)^0=H^0 C_G(H)^0$.
Moreover, $\End_H(X)=\End_{\SL_n(k)}(X)=k$ (see \cite[Prop. 2.8]{Jan}); this implies that $C_G(H)$ is finite, so $N_G(H)/H$ is finite.

Now the quotient $X^H/N_G(H)$ is infinite since $H$ fixes a full 1-dimensional subspace of $X$ and $N_G(H)/H$ is finite.
On the other hand, the quotient $X/G$ is a single point and so the morphism
$$
\psi_{X,H}\colon X^H/N_G(H)\rightarrow X/G
$$
is not a finite morphism.

Note that Example~\ref{ex:sl2} above is just this one with $p=n=2$.
\end{ex}

\begin{ex}
\label{ex:harry2}
We provide another example, this time with a symplectic group.
Let $p=2$ and consider the symplectic group $\SP_4(k)$. 
We choose the simple roots $\alpha=(2,-1)$ and $\beta=(-2,2)$.
The simple $\SP_4(k)$-module $L(0,1)$, corresponding to the dominant weight $(0,1)$, is 4-dimensional with weights $(0,1), (2,-1), (-2,1),(0,-1)$. We consider the Weyl module $\Delta(0,1)$ corresponding to $(0,1)$.  This  is  an indecomposable 5-dimensional module with simple head  $L(0,1)$ and it fits into the short exact sequence
$$
0\rightarrow k\rightarrow \Delta(0,1)\rightarrow L(0,1)\rightarrow 0,$$
where $k$ is the trivial $\SP_4(k)$-module.

Now consider the matrix representation corresponding to the $\SP_4(k)$-module $\Delta(0,1)$.
This gives a group homomorphism
$$
\rho:\SP_4(k)\rightarrow \SL_5(k).
$$
Let $X=\Delta(0,1)$ and let $H$ be the image of $\SP_4(k)$ in $G=\SL_5(k)$ with natural module $X$.
Then $X$ is an indecomposable $H$-module and the trivial module appears in the $H$-socle of $X$.
The group $H$ is reductive but not $G$-cr since the representation $X$ is not semisimple.

Since $H$ is reductive we have that $N_G(H)^0=H^0 C_G(H)^0$.
Moreover, we have that $\End_H(X)=\End_{\SP_4(k)}(X)=k$ (see \cite[Prop. 2.8]{Jan}), so the only endomorphisms of $X$ as an $H$-module are the scalars.
Since $G = \SL_5(k)$, this means that $C_G(H)$ is finite and so $N_G(H)/H$ is finite.
Now, as in our previous examples, the quotient $X^H/N_G(H)$ is infinite since $H$ fixes a full one-dimensional subspace of $X$ and $N_G(H)/H$ is finite, whereas the quotient $X/G$ is a single point.
Therefore the morphism $\psi_{X,H}\colon X^H/N_G(H)\rightarrow X/G$ is not a finite morphism.
\end{ex}

\begin{ex}
 The above examples show that if $H$ is the image of a non-completely reducible representation of a reductive group in $G= \GL(X)$ or $\SL(X)$ then the conclusion of Theorem~\ref{thm:mainthm} can fail.  On the other hand, if $H$ is the image of a completely reducible representation then we get an easy representation-theoretic proof of Theorem~\ref{thm:mainthm} in this special case, as follows.  
If the representation is trivial (of any dimension), so that $H$ is the trivial group, then $X^H = X$ and $N_G(H) = G$, so the map $\psi_{X,H}$ is the identity map.
If the representation is non-trivial and irreducible, then
$X^H = \{0\}$ and the map $\psi_{X,H}\colon X^H/N_G(H)\ra X/G$ is just the map from a singleton set to a singleton set and hence is finite.  
If the representation is non-trivial and completely reducible but not irreducible then $X^H$ has an $H$-complement in $X$: say, $X = X^H \oplus W$.  The centre of the Levi subgroup of $G$ corresponding to the given decomposition normalizes $H$ and acts as scalars on $X^H$, so $X^H/N_G(H)$ is again a singleton set and $\psi_{X,H}$ is finite.
\end{ex}

\section{Double cosets}
\label{sec:dblecosets}

In this section we consider a separate but related problem, using techniques from earlier sections.  Fix a reductive group $G$, and reductive subgroups $H$ and $K$ of $G$.  The group $H\times K$ acts on $G$ by the formula $(h,k)\cdot g= hgk^{-1}$; the orbits of the action are the $(H,K)$-double cosets and we call this action the {\em double coset action}.  The stabilizer $(H\times K)_g$ is given by $\{(h,g^{-1}hg)\mid h\in H\cap gKg^{-1}\}$.  We are interested in the following question: when does $G$ have good dimension for the double coset action?  
Note that, again, in characteristic $0$ this problem was solved by Luna in \cite{luna00}; he showed using \'etale slices that $G$ always has good dimension for the double coset action. 
The problem of translating Luna's results to positive characteristic was also studied by Brundan \cite{brundan1, brundan2, brundan3, brundan5}, who considered in particular the question of when there is a dense double coset in $G$.
Our main result gives a necessary and sufficient condition for $G$ to have good dimension for the double coset action in terms of the stabilizers of the action.

\begin{thm}
\label{thm:closedcrit}
 Let $G$ be connected.  The following are equivalent:
 \begin{itemize}
 \item[(i)] $G$ has good dimension for the $(H\times K)$-action;
 \item[(ii)] generic stabilizers of $H\times K$ on $G$ are reductive;
 \item[(iii)] $H\cap gKg^{-1}$ is reductive for generic $g\in G$.
 \end{itemize}
\end{thm}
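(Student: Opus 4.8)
The plan is to dispatch the two easy implications and then concentrate on the substantial direction (iii)$\Rightarrow$(i). The equivalence (ii)$\Leftrightarrow$(iii) is immediate, since projection onto the first factor identifies the stabilizer $(H\times K)_g=\{(h,g^{-1}hg)\mid h\in H\cap gKg^{-1}\}$ with $H\cap gKg^{-1}$ as an algebraic group, so one is reductive exactly when the other is. For (i)$\Rightarrow$(ii), recall that $G$ is irreducible because it is connected, so by Remark~\ref{rem:stableopen} good dimension forces the (open) set of stable points for the $(H\times K)$-action to be nonempty, hence dense. A stable point has a closed orbit, and a closed orbit in the affine variety $G$ is itself affine; by Lemma~\ref{lem:quotientorbit}(ii) its stabilizer is therefore reductive. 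Thus generic stabilizers are reductive, which is (ii).

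For (iii)$\Rightarrow$(i) I would first reduce the goal: since $G$ is irreducible, Remark~\ref{rem:stableopen} shows that $G$ has good dimension if and only if the generic $(H\times K)$-orbit is closed (recall that closedness of $HgK$ in $G$ is, via Lemma~\ref{lem:closedhorbits}, the same as closedness of the $H$-orbit of $gK$ in the affine homogeneous space $G/K$). I would then argue by contraposition, assuming the generic orbit is \emph{not} closed and aiming to produce a nontrivial connected unipotent subgroup inside the generic stabilizer, contradicting (iii). So let $g\in G$ be generic, with $HgK$ of maximal dimension but not closed. Applying Kempf's Theorem~\ref{thm:kempf} (with $S$ the unique closed orbit in $\overline{HgK}$) yields a cocharacter $\mu=(\lambda,\nu)\in Y(H\times K)$ such that $g_0:=\lim_{a\to 0}\lambda(a)g\nu(a)^{-1}$ exists, lies in $S$, is not $(H\times K)$-conjugate to $g$, and $(H\times K)_g\subseteq P_\mu=P_\lambda(H)\times P_\nu(K)$. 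A short computation shows that $g_0$ is $\mu$-fixed, so $\nu=g_0^{-1}\lambda g_0$ and, setting $u:=gg_0^{-1}$, one finds $u\in R_u(P_\lambda)$ with $u\neq 1$.

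Next I would exploit the Levi retraction $c_\mu\colon P_\mu\to L_\mu$, $c_\mu(x)=\lim_{a\to 0}\mu(a)x\mu(a)^{-1}$. Passing to the limit in the relation $x\cdot g=g$ gives $c_\mu\bigl((H\times K)_g\bigr)\subseteq (H\times K)_{g_0}$, and since $c_\mu$ restricts to the quotient homomorphism $P_\mu\to P_\mu/R_u(P_\mu)=L_\mu$, its kernel on the stabilizer is the unipotent normal subgroup $(H\times K)_g\cap R_u(P_\mu)$. Hence $(H\times K)_g$ is reductive if and only if this intersection is finite, and the entire question reduces to showing that, for generic $g$ with non-closed orbit, the subgroup $(H\times K)_g\cap R_u(P_\mu)$ is positive-dimensional.

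The main obstacle is precisely the degenerate situation in which $\lambda$ is central in the limit stabilizer $(H\times K)_{g_0}$: there $c_\mu$ need lose no dimension, and for a \emph{general} reductive action one genuinely cannot conclude (the action of $\mathbb{G}_m$ on $\mathbb{A}^1$ has reductive generic stabilizer but non-closed generic orbit). This is the point at which the homogeneity built into the double coset problem must enter, and where the argument ceases to be a statement about arbitrary reductive actions. The key leverage is the relation $g=ug_0$ with $u$ a generic nontrivial element of $R_u(P_\lambda)$: replacing $g_0$ by a generic point of $R_u(P_\lambda)g_0$ should strictly lower $\dim\bigl(H\cap gKg^{-1}\bigr)$ below $\dim\bigl(H\cap g_0Kg_0^{-1}\bigr)$ while forcing a unipotent direction into the stabilizer along $R_u(P_\lambda)$. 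Making this precise — ruling out the central-$\lambda$ configuration and extracting the unipotent subgroup — is the heart of the proof, and I expect it to require both the finer optimality properties of Kempf's destabilizing cocharacter and the rigidity coming from $G/K$ being a single closed $G$-orbit (via Lemma~\ref{lem:closedhorbits}).
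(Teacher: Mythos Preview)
Your treatment of (ii)$\Leftrightarrow$(iii) and (i)$\Rightarrow$(ii) is correct and matches the paper. The gap is entirely in (iii)$\Rightarrow$(i), and you yourself flag it: the last paragraph is a description of what would need to happen, not an argument that it does. As written, the proposal is incomplete.

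More seriously, the strategy you sketch --- apply Kempf at a single generic point $g$, pass to $g_0$ via the Levi retraction $c_\mu$, and hope to exhibit a nontrivial unipotent piece in $(H\times K)_g\cap R_u(P_\mu)$ --- does not match the paper and probably cannot be completed along those lines. Two concrete issues: first, when the generic stabilizer is \emph{finite} (which is one of the two cases the paper treats), it is automatically reductive, so your contraposition cannot possibly produce a positive-dimensional unipotent subgroup there; you would need an entirely separate argument to rule out non-closed generic orbits in that regime. Second, even in the positive-dimensional case, your single-point analysis gives no control over how the Kempf cocharacter varies with $g$, and the ``central-$\lambda$'' obstruction you identify is real: nothing in Kempf's optimality forces $(\lambda,\nu)$ to cut into the stabilizer nontrivially.

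The paper's route is quite different and worth knowing. It runs by induction on $\dim G$, after reducing to $H,K$ connected, to $k$ uncountable (Lemma~\ref{lem:extn}), and to $G$ semisimple (Lemma~\ref{lem:semisimple}). In the positive-dimensional case one does \emph{not} apply Kempf at a single point; instead one uses that $H\times K$ has only countably many conjugacy classes of cocharacters together with the uncountability of $k$ to find a single $\tau=(\lambda,\mu)$ such that an $(H\times K)$-conjugate of $\tau$ fixes generic $g$. Lemma~\ref{lem:oneclass} then normalises this to $\lambda\in Y(K)$ with $(H\times K)\cdot L_\lambda$ dense, and one descends to the proper Levi $L_\lambda$ and invokes the inductive hypothesis, lifting closedness back via Remark~\ref{rems:failure}(iii). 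In the finite-stabilizer case the contradiction is obtained by a dimension count: if generic orbits were not closed, Lemma~\ref{lem:oneclass} gives $(H\times K)\cdot P_\lambda$ dense, and a density/dimension argument forces $R_u(P_\lambda(H))\times R_u(P_\lambda(K))$ to act transitively on each coset $R_u(P_\lambda)l$, so a generic $g\in P_\lambda$ with finite stabilizer is $(H\times K)$-conjugate to its limit $l\in L_\lambda$ --- but $l$ is fixed by the nontrivial $(\lambda,\lambda)$, contradicting finiteness. Neither half resembles the local Kempf/Levi-retraction picture you propose.
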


\begin{rems}
\label{rem:closedcrit}
 (i). It follows from \cite[Thm.\ 1.1]{martin3} that in order to show that generic stabilizers are reductive, it is enough to show that $(H\times K)_g$ has minimal dimension and is reductive for some $g\in G$.

 (ii). Work of Popov \cite{popov} implies that if a connected semisimple group $G$ acts on a smooth irreducible affine variety $V$ and the divisor class group ${\rm Cl}(V)$ has no elements of infinite order then generic orbits of $G$ on $V$ are closed if and only if generic stabilizers of $G$ on $V$ are reductive.  By work of Tange \cite[Thm.\ 1.1]{tange}, if $G$ is connected then ${\rm Cl}(G)$ has no elements of infinite order, so Theorem~\ref{thm:closedcrit} follows if $H$ and $K$ are connected and semisimple.
\end{rems}

 We need some preparatory results and notation.  
 First, given a cocharacter $\tau = (\lambda,\mu)\in Y(H\times K)$ and $g\in G$, we say that 
 $\tau$ \emph{destabilizes $g$} if $\lim_{a\to 0} \tau(a)\cdot g = \lim_{a\to0} \lambda(a)g\mu(a)^{-1}$ exists.
 Given $g\in G$, define a homomorphism $\widehat{\phi}_g\colon G\ra G\times G$ by $\widehat{\phi}_g(g')= (g',g^{-1}g'g)$.  A short calculation shows that $\widehat{\phi}_g$ induces an isomorphism $\phi_g\colon H\cap gKg^{-1}\ra (H\times K)_g$.  This shows that (ii) and (iii) of Theorem~\ref{thm:closedcrit} are equivalent.  Moreover, given $g\in G$ we define an isomorphism of varieties $r_g\colon G\ra G$ by $r_g(g')= g'g^{-1}$ and an isomorphism of algebraic groups $\psi_g\colon H\times K\ra H\times gKg^{-1}$ by $\psi_g(h,k)= (h,gkg^{-1})$; then $r_g$ is a $\psi_g$-equivariant map from the $(H\times K)$-variety $G$ to the $(H\times gKg^{-1})$-variety $G$, where we let $H\times gKg^{-1}$ act on $G$ by the double coset action.

\begin{lem}
\label{lem:basechange}
 Let $g\in G$.  Then $G$ has good dimension for the $(H\times K)$-action if and only if $G$ has good dimension for the $H\times gKg^{-1}$-action, and generic stabilizers of $H\times K$ on $G$ are reductive if and only if generic stabilizers of $H\times gKg^{-1}$ on $G$ are reductive.
\end{lem}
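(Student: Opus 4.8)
The plan is to transport both properties through the equivariant isomorphism $(r_g,\psi_g)$ set up immediately before the lemma. Recall that $r_g\colon G\ra G$, $g'\mapsto g'g^{-1}$, is an isomorphism of varieties, that $\psi_g\colon H\times K\ra H\times gKg^{-1}$, $(h,k)\mapsto (h,gkg^{-1})$, is an isomorphism of algebraic groups, and that $r_g$ is $\psi_g$-equivariant for the two double coset actions. Since $r_g$ is a bijection intertwining the actions via the group isomorphism $\psi_g$, it carries $(H\times K)$-orbits bijectively onto $(H\times gKg^{-1})$-orbits, and because $r_g$ is in particular a homeomorphism it preserves both the dimension and the closedness of orbits. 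The whole orbit structure of the two actions is thereby identified, and the result follows once one observes that each property in the statement is an invariant of this identification.

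For the good-dimension claim, I would note that $r_g$ maps $\{g'\in G\mid (H\times K)\cdot g'\ \text{is closed}\}$ bijectively onto the corresponding set $\{g'\in G\mid (H\times gKg^{-1})\cdot g'\ \text{is closed}\}$, using that $r_g$ sends orbits to orbits and closed sets to closed sets. As a homeomorphism commutes with taking closures, $r_g$ carries the closure of the first set onto the closure of the second; that is, it sends $G_{\mathrm{cl}}$ for the $(H\times K)$-action onto $G_{\mathrm{cl}}$ for the $(H\times gKg^{-1})$-action. Hence one closure equals $G$ if and only if the other does, which is precisely the equivalence of good dimension for the two actions.

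For the stabilizer claim, the equivariance gives, for each $g'\in G$, that $\sigma\in H\times K$ fixes $g'$ if and only if $\psi_g(\sigma)$ fixes $r_g(g')$; since $r_g$ is injective, $\psi_g$ restricts to an isomorphism of algebraic groups $(H\times K)_{g'}\ra (H\times gKg^{-1})_{r_g(g')}$. Reductivity is preserved under isomorphism of algebraic groups, and $r_g$ carries nonempty open (hence dense) subsets of $G$ to nonempty open subsets of $G$. Thus a dense open set on which the $(H\times K)$-stabilizers are reductive is carried to a dense open set on which the $(H\times gKg^{-1})$-stabilizers are reductive; the converse is identical, using $r_g^{-1}$. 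This yields the second equivalence.

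I do not expect a genuine obstacle here: the lemma is a transport-of-structure argument, and all the real content---that $(r_g,\psi_g)$ is an equivariant isomorphism, and that $\phi_g$ identifies the relevant stabilizers---was already established in the paragraph preceding the statement. The only points needing a little care are the explicit verifications that the closed-orbit locus, its closure, and the genericity condition are each manifestly preserved by an equivariant isomorphism of varieties, which the steps above make precise.
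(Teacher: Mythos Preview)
Your proof is correct and follows exactly the paper's approach: the paper's proof is simply the terse two-sentence version of your transport-of-structure argument via the equivariant pair $(r_g,\psi_g)$, recording that $r_g$ carries orbits to orbits and $\psi_g$ carries stabilizers to stabilizers. One small slip in your closing remark: you cite $\phi_g$ as identifying the relevant stabilizers, but it is $\psi_g$ (as you correctly use in the body) that maps $(H\times K)_{g'}$ isomorphically onto $(H\times gKg^{-1})_{r_g(g')}$; the map $\phi_g$ instead identifies $H\cap gKg^{-1}$ with $(H\times K)_g$, which is a different identification and not the one at work here.
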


\begin{proof}
 The $\psi_g$-equivariance of $r_g$ implies that $(H\times gKg^{-1})\cdot r_g(g')= r_g((H\times K)\cdot g')$ and $r_g((H\times K)_{g'})= (H\times gKg^{-1})_{r_g(g')}$ for all $g'\in G$.  The result follows.
\end{proof}

In the special case when $A$ is reductive, the next result is \cite[Lem.\ 4.1]{martin1}.  We take the opportunity to correct the proof given in {\em loc.\ cit.}

\begin{lem}
\label{lem:extnclosed}
 Let $k'$ be an algebraically closed extension field of $k$.  Let $A$ be a linear algebraic group acting on an affine variety $X$, and let $A'$ (resp.\ $X'$) be the group (resp.\ variety) over $k'$ obtained from $K$ (resp.\ $X$) by extension of scalars.  Let $x\in X$.  Then:
\begin{itemize} 
 \item[(i)] ${\rm dim}_{k'}(A'_x)= {\rm dim}_k(A_x)$ and ${\dim}_{k'}(A'\cdot x)= {\rm dim}_k(A\cdot x)$;
 
\item[(ii)] $A'\cdot x$ is closed in $X'$ if and only if $A\cdot x$ is closed in $X$.
\end{itemize}
\end{lem}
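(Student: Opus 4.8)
The plan is to reduce both parts to the single principle that forming the stabilizer $A_x$, the orbit $A\cdot x$, and its Zariski closure $\overline{A\cdot x}$ all commute with the base change $-\times_k k'$ along the extension $k\hookrightarrow k'$. Granting this, part (i) is a matter of dimension bookkeeping, and part (ii) reduces to the fact that a nonempty $k$-variety stays nonempty after extension of scalars.

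For part (i), I would first note that $A_x$ is the closed subgroup of $A$ defined by the equations $a\cdot x= x$; since $x$ is a $k$-point and the action morphism is defined over $k$, these equations have coefficients in $k$, so $A'_x= A_x\times_k k'$. As the dimension of a finite-type scheme is unchanged under base change to a field extension, ${\rm dim}_{k'}(A'_x)= {\rm dim}_k(A_x)$. The orbit-dimension claim then follows from the orbit map $\kappa_x\colon A\to A\cdot x$ being surjective with all fibres cosets of the stabilizer, so that ${\rm dim}(A\cdot x)= {\rm dim}(A)- {\rm dim}(A_x)$ over either field, together with ${\rm dim}_{k'}(A')= {\rm dim}_k(A)$.

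For part (ii), the key inputs are that the orbit $A\cdot x= {\rm im}(\kappa_x)$, which is constructible by Chevalley's theorem, satisfies $A'\cdot x= (A\cdot x)\times_k k'$, and that the closure $\overline{A\cdot x}$, cut out by ${\rm ker}(\kappa_x^*\colon k[X]\to k[A])$, satisfies $\overline{A'\cdot x}= \overline{A\cdot x}\times_k k'$ because the flat map $k\to k'$ preserves kernels. The forward implication is then immediate, since the base change of a closed subvariety is closed. For the converse, if $A'\cdot x$ is closed then $(A\cdot x)\times_k k'= (\overline{A\cdot x})\times_k k'$, so the base change of the boundary $\overline{A\cdot x}\setminus (A\cdot x)$ is empty; as this boundary is a $k$-variety and extension of scalars takes nonempty varieties to nonempty varieties, it must already be empty, whence $A\cdot x= \overline{A\cdot x}$ is closed.

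The main obstacle is the justification that the orbit and its closure commute with base change, and in particular that the reduced structures agree in positive characteristic; I would settle this using the two quoted facts (constructibility of the image, and preservation of kernels under the flat base change $k\to k'$), noting that $A\cdot x$ is geometrically reduced because $k$ is perfect. As a clean alternative that avoids these scheme-theoretic points, closedness of $A\cdot x$ can be written as a first-order sentence over $k$: choosing $f_1,\dots, f_r\in k[X]$ that generate the ideal of $\overline{A\cdot x}$, it reads ``every $y\in X$ with $f_1(y)= \cdots= f_r(y)= 0$ lies in $A\cdot x$''. Since the same $f_j$ define $\overline{A'\cdot x}$, the equivalence in (ii) then follows from the model completeness of the theory of algebraically closed fields, which makes $k\hookrightarrow k'$ an elementary embedding.
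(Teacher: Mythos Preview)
Your argument is correct. Part (i) matches the paper's proof essentially verbatim. For part (ii) you take a different route: you argue directly that both the orbit and its closure commute with the flat base change $k\hookrightarrow k'$, so the boundary $\overline{A\cdot x}\setminus A\cdot x$ does too, and then emptiness transfers by the Nullstellensatz (or, in your alternative, by model completeness of ACF). The paper instead avoids showing that the locally closed set $A\cdot x$ is itself $k$-defined: it works only with closed $k$-defined sets, characterising the boundary as $\overline{A\cdot x}\cap X_t$ for the semi-continuity stratum $X_t=\{y:\dim A_y\geq t\}$ with $t$ one more than the generic stabiliser dimension, and then transferring emptiness of that closed intersection. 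Your approach is cleaner conceptually and the model-theoretic variant is pleasantly short; the paper's approach trades the faithfully-flat-base-change-of-images step for the more elementary observation that orbit-dimension strata are closed and defined over $k$.
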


\begin{proof}
 We regard $X$ as a subset of $X'$ and $A$ as a subgroup of $A'$ in the obvious way.  The orbit map $\kappa_x\colon A'\ra A'\cdot x$ is defined over $k$, so the closure $\ovl{A'\cdot x}$ (in $X'$) is $k$-defined \cite[Cors.\ AG.14.5 and AG.14.6]{borel}.  This implies that $\ovl{A'\cdot x}\cap X= \ovl{A\cdot x}$, where the RHS is the closure in $X$.  The stabilizer $A'_x$ is $k$-defined---in fact, $A'_x$ is naturally isomorphic to the group over $k'$ obtained from $A_x$ by extension of scalars.  Hence ${\rm dim}_{k'}(A'_x)= {\rm dim}_k(A_x)$.  This proves the first assertion of (i), and the second follows immediately.
 
 Let $r= {\dim}_{k'}(A'\cdot x)= {\rm dim}_k(A\cdot x)$.  Set $X'_t= \{y'\in X'\mid {\rm dim}_{k'}(A'_{y'})\geq t\}$ and $X_t= \{y\in X\mid {\rm dim}_k(A_y)\geq t\}$ for $t\geq 0$.  Then $X'_t$ and $X_t$ are closed in $X'$ and $X$, respectively, and it follows from the proof of \cite[Lem.~3.7(c)]{newstead} that $X'_t$ is $k$-defined.  By (i), $X_t= X'_t\cap X$.  Now $\ovl{A'\cdot x}$ is the union of $A'\cdot x$ with certain other $A'$-orbits, each of which has dimension strictly less than $r$, and likewise for $\ovl{A\cdot x}$.  Hence $A'\cdot x$ (resp., $A\cdot x$) is closed if and only if $\ovl{A'\cdot x}\cap X'_{r+1}= \emptyset$ (resp., $\ovl{A\cdot x}\cap X_{r+1}= \emptyset$).  But $\ovl{A'\cdot x}\cap X'_{r+1}$ is $k$-defined and $k$ is algebraically closed, so $\ovl{A'\cdot x}\cap X'_{r+1}$ is empty if and only if $(\ovl{A'\cdot x}\cap X'_{r+1})\cap X= \ovl{A\cdot x}\cap X_{r+1}$ is empty.  Part (ii) now follows.
\end{proof}

\begin{lem}
\label{lem:extn}
 Assume $G$ is connected.  Let $k'$ be an algebraically closed extension field of $k$ and let $G'$, $H'$ and $K'$ be the algebraic groups over $k'$ obtained from $G$, $H$ and $K$, respectively, by extension of scalars.  Then:
 \begin{itemize}
 \item[(i)] generic stabilizers of $H'\times K'$ on $G'$ are reductive if and only if generic stabilizers of $H\times K$ are reductive;
 \item[(ii)] $G'$ has good dimension for the $(H'\times K')$-action if and only if $G$ has good dimension for the $(H\times K)$-action.
 \end{itemize}
\end{lem}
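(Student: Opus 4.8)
The plan is to prove both parts by the same two-step template: first reduce each property to the existence of a \emph{single} point of $G$ (resp.\ $G'$) with prescribed behaviour, and then transfer such a point between $k$ and $k'$ using Lemma~\ref{lem:extnclosed}. For part (i) the reduction is supplied by Remark~\ref{rem:closedcrit}(i): generic stabilizers of $H\times K$ on $G$ are reductive if and only if there exists $g\in G$ with $(H\times K)_g$ of minimal dimension and reductive. For part (ii) the reduction is supplied by Remark~\ref{rem:stableopen}: since $G$ is irreducible, $G$ has good dimension if and only if it has a stable point, i.e.\ a $g\in G$ whose orbit is closed and of maximal dimension.

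Before transferring points I would record two preliminaries. First, the minimal stabilizer dimension is unchanged: writing $m$ and $m'$ for the minimal values of $\dim(H\times K)_g$ over $G$ and of $\dim(H'\times K')_{g'}$ over $G'$, one has $m=m'$, and hence the maximal orbit dimensions also agree. Indeed, Lemma~\ref{lem:extnclosed}(i) gives $\dim_{k'}(H'\times K')_g=\dim_k(H\times K)_g$ for every $g\in G\subseteq G'$, so $m\geq m'$; and the locus $\{g'\mid \dim(H'\times K')_{g'}=m'\}$ is a nonempty open subset of $G'$ defined over $k$ (as in the proof of Lemma~\ref{lem:extnclosed}), hence it meets $G$, giving $m\leq m'$. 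Second, because $k$ and $k'$ are algebraically closed the unipotent radical commutes with the base change $k\subseteq k'$; thus $(H'\times K')_g$ is the extension of scalars of $(H\times K)_g$ and is reductive if and only if $(H\times K)_g$ is. The one further ingredient I would use repeatedly is that $G(k)$ is Zariski dense in $G'$, so that every nonempty open subset of $G'$ meets $G$.

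For part (i), the forward implication is immediate: a generic $g\in G$ has $(H\times K)_g$ reductive of the minimal dimension $m$, and after extension of scalars this yields a point of $G'$ whose stabilizer is reductive of the minimal dimension $m=m'$, so Remark~\ref{rem:closedcrit}(i) applied over $k'$ gives the conclusion. For the converse, suppose generic stabilizers over $k'$ are reductive; by definition there is a dense open $U'\subseteq G'$ on which all stabilizers are reductive. Intersecting $U'$ with the (open, $k$-defined) minimal-dimension locus and using density of $G(k)$, I can choose $g\in G$ in this intersection. Then $(H'\times K')_g$ is reductive of dimension $m'$, so $(H\times K)_g$ is reductive of dimension $m=m'$, and Remark~\ref{rem:closedcrit}(i) over $k$ finishes the argument.

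Part (ii) runs in parallel, now using closedness of orbits in place of reductivity of stabilizers and invoking Lemma~\ref{lem:extnclosed}(ii) in place of the descent of reductivity; recall that the maximal orbit dimension is the same over $k$ and $k'$ by the equality $m=m'$ above. If $G$ has good dimension, a stable point $g\in G$ has an orbit that remains closed and of maximal dimension over $k'$ by Lemma~\ref{lem:extnclosed}, so $g$ is a stable point of $G'$ and $G'$ has good dimension. Conversely, if $G'$ has good dimension then its set of stable points is a nonempty open subset of $G'$ (Remark~\ref{rem:stableopen}); by density of $G(k)$ it contains a point $g\in G$, whose orbit is then closed over $k$ (Lemma~\ref{lem:extnclosed}(ii)) and of maximal dimension, so $g$ is stable and $G$ has good dimension. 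The main obstacle in both parts is the descent direction, where the witnessing point produced over $k'$ need not be a $k$-point; this is exactly what the density of $G(k)$ in $G'$, together with the $k$-definedness of the relevant dimension loci in Lemma~\ref{lem:extnclosed}, is used to overcome.
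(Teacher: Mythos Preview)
Your proof is correct and follows essentially the same approach as the paper: both arguments reduce each property to the existence of a single witness point via openness of the relevant locus (stable points for (ii), orbits of minimal stabilizer dimension with reductive stabilizer for (i), the latter coming from \cite[Thm.\ 1.1]{martin3}), and then transfer such a witness between $k$ and $k'$ using density of $G$ in $G'$ together with Lemma~\ref{lem:extnclosed}. Your write-up is in fact more explicit than the paper's about the equality $m=m'$ and the descent direction, but the underlying strategy is identical.
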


\begin{proof}
 We can regard $G$, $H$ and $K$ as dense subgroups of $G'$, $H'$ and $K'$, respectively.  If $g\in G$ then $(H'\times K')_g$ is isomorphic to the group obtained from $(H\times K)_g$ by extension of scalars, so $(H'\times K')_g$ is reductive if and only if $(H\times K)_g$ is reductive.  By Lemma~\ref{lem:extnclosed}, $(H'\times K')\cdot g$ is closed in $G'$ if and only if $(H\times K)\cdot g$ is closed in $G$, and ${\rm dim}((H'\times K')_g)$ is minimal if and only if ${\rm dim}((H\times K)_g)$ is minimal, so $g$ is a stable point for the $(H'\times K')$-action if and only if it is a stable point for the $(H\times K)$-action.  The union of the stable $(H\times K)$-orbits is open in $G$, and likewise for $H'\times K'$ and $G'$ (Lemma~\ref{rem:stableopen}).  The union of the $(H\times K)$-orbits of minimum dimension having reductive stabilizer is open in $G$, and likewise for $H'\times K'$ and $G'$ \cite[Thm.\ 1.1]{martin3}.  Putting these facts together, we obtain the desired result.
\end{proof}

\begin{lem}
\label{lem:destab}
 Let $\lambda\in Y(H)$, $\mu\in Y(K)$.  Given $g\in G$ such that 
 $(\lambda,\mu)$ destabilizes $g$, set  
 $g_0:= \lim_{a\to 0} \lambda(a)g\mu(a)^{-1}$ and let $u=g g_0^{-1}$.  Then $\mu= g_0^{-1}\cdot \lambda$ and $u\in R_u(P_\lambda)$.
\end{lem}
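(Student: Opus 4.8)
The plan is to exploit the fact that the map $a\mapsto \lambda(a)g\mu(a)^{-1}$ is almost a cocharacter, and to extract both conclusions from a single functional equation. First I would set $f(a)=\lambda(a)g\mu(a)^{-1}$, so that $f\colon k^*\ra G$ is a morphism which by hypothesis extends to a morphism $\overline f\colon k\ra G$ with $\overline f(0)=g_0$. The key observation is that, because $\lambda$ and $\mu$ are homomorphisms and $k^*$ is abelian, $f$ satisfies the twisted multiplicativity relation $\lambda(b)f(a)\mu(b)^{-1}=f(ab)$ for all $a,b\in k^*$; this is a one-line computation collapsing $\lambda(b)\lambda(a)=\lambda(ab)$ and $\mu(a)^{-1}\mu(b)^{-1}=\mu(ab)^{-1}$.

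With this relation in hand, I would fix $b\in k^*$ and let $a\to 0$. Continuity of multiplication in $G$ gives $\lambda(b)g_0\mu(b)^{-1}$ on the left, while on the right $f(ab)\to g_0$ since $ab\to 0$. Hence $\lambda(b)g_0\mu(b)^{-1}=g_0$ for every $b$, which rearranges to $\lambda(b)g_0=g_0\mu(b)$ and so to $\mu(b)=g_0^{-1}\lambda(b)g_0$, i.e.\ $\mu=g_0^{-1}\cdot\lambda$. This settles the first assertion.

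For the second assertion I would feed the rearranged identity $\mu(a)^{-1}=g_0^{-1}\lambda(a)^{-1}g_0$ back into the definition of $u$. Computing
$$\lambda(a)u\lambda(a)^{-1}=\lambda(a)g\,g_0^{-1}\lambda(a)^{-1}=\lambda(a)g\mu(a)^{-1}g_0^{-1}=f(a)g_0^{-1},$$
and letting $a\to0$, the right-hand side tends to $g_0g_0^{-1}=1$. By the description $R_u(P_\lambda)=\{g'\in G\mid \lim_{a\to0}\lambda(a)g'\lambda(a)^{-1}=1\}$ recorded in Section~\ref{sec:Rpar}, this is precisely the statement $u\in R_u(P_\lambda)$.

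There is no serious obstacle here; the only point requiring care is the passage to the limit, where one must remember that the limits in question are genuine limits of the extended morphism $\overline f$ (so that continuity of the group operations may be applied freely) rather than merely formal manipulations. The conceptual content is entirely in the functional equation $\lambda(b)f(a)\mu(b)^{-1}=f(ab)$, which forces $g_0$ to be a fixed point of the twisted torus action and thereby simultaneously determines $\mu$ and locates $u$ in $R_u(P_\lambda)$.
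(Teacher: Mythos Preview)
Your proof is correct and follows essentially the same route as the paper's own argument: both establish that $g_0$ is fixed by $(\lambda,\mu)$ (you via the functional equation $\lambda(b)f(a)\mu(b)^{-1}=f(ab)$, the paper by invoking this as a standard property of limits), deduce $\mu=g_0^{-1}\cdot\lambda$ by rearranging, and then use that identity to rewrite $\lambda(a)u\lambda(a)^{-1}$ as $f(a)g_0^{-1}$ and pass to the limit. Your presentation is slightly more explicit about justifying the fixed-point step, but the two proofs are the same in substance.
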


\begin{proof}
 Since $g_0$ is obtained as a limit along $(\lambda,\mu)$, we have that $(\lambda,\mu)$ fixes $g_0$, so $\lambda(a)g_0\mu(a)^{-1} = g_0$ for all $a\in k^*$. Rearranging, we see that $\mu= g_0^{-1}\cdot \lambda$.  
 Now for all $a \in k^*$,
 $$
 \lambda(a)g\mu(a)^{-1}
 = \lambda(a)ug_0\mu(a)^{-1}
 = \lambda(a)ug_0(g_0^{-1} \lambda(a)^{-1}g_0)
 = \lambda(a)u\lambda(a)^{-1}g_0.
 $$  
 As $\lim_{a\to 0} \lambda(a)g\mu(a)^{-1}= g_0$, 
 it follows that $\lim_{a\to 0} \lambda(a)u\lambda(a)^{-1}= 1$, 
 so $u\in R_u(P_\lambda)$.
\end{proof}

\begin{lem}
\label{lem:semisimple}
 Assume $G$ is connected.  Let $G_1= G/Z(G)^0$, let $\sigma\colon G\ra G_1$ be the canonical projection and set $H_1= \sigma(H)$ and $K_1= \sigma(K)$.  Then for all $g\in G$:\begin{itemize}
 \item[(i)] $(H\times K)_g$ is reductive if and only if $(H_1\times K_1)_{\sigma(g)}$ is reductive;
 \item[(ii)] if $(H_1\times K_1)\cdot \sigma(g)$ is closed then $(H\times K)\cdot g$ is closed.
\end{itemize}
\end{lem}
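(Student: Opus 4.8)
The plan is to transfer both statements through the projection $\sigma\colon G\ra G_1$ whose kernel is the central torus $Z:=Z(G)^0$, keeping careful track of the fact that $\sigma$ does not commute with intersections of subgroups. Write $M:=gKg^{-1}$, a reductive subgroup of $G$. By the isomorphism $\phi_g\colon H\cap gKg^{-1}\ra(H\times K)_g$ introduced above we have $(H\times K)_g\iso H\cap M$, and applying the same construction to $H_1,K_1\le G_1$ gives $(H_1\times K_1)_{\sigma(g)}\iso H_1\cap\sigma(g)K_1\sigma(g)^{-1}=\sigma(H)\cap\sigma(M)$. Since $\ker\sigma=Z$ is central, a short check gives $\sigma(H)\cap\sigma(M)=\sigma(H\cap MZ)\iso(H\cap MZ)/(H\cap Z)$. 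Thus everything reduces to comparing the reductivity and closedness data of $H\cap M$ with those of $(H\cap MZ)/(H\cap Z)$.

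For part (i) I would use two elementary facts about how reductivity behaves in short exact sequences whose outer term is of multiplicative type. First, $H\cap M$ is normal in $H\cap MZ$: it is the kernel of the homomorphism $H\cap MZ\ra Z/(M\cap Z)$, $mz\mapsto z(M\cap Z)$, whose image lies in the torus quotient $Z/(M\cap Z)$ and so is of multiplicative type. Because a normal subgroup with quotient of multiplicative type has the same unipotent radical as the ambient group, $H\cap M$ is reductive iff $H\cap MZ$ is reductive. Second, $H\cap Z$ is central in $H\cap MZ$ (as $Z$ is central in $G$) and is of multiplicative type, and passing to a quotient by a central subgroup of multiplicative type preserves reductivity in both directions; hence $H\cap MZ$ is reductive iff $(H\cap MZ)/(H\cap Z)$ is. Chaining these equivalences yields (i).

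For part (ii) I would enlarge the acting group. Let $\widehat{G}:=H\times K\times Z$ act on $G$ by $(h,k,z)\cdot g'=hg'k^{-1}z$; this is a well-defined action because $Z$ is central. Its orbit through $g$ is $\widehat{G}\cdot g=(H\times K)\cdot g\cdot Z=\sigma^{-1}\bigl((H_1\times K_1)\cdot\sigma(g)\bigr)$, which is closed in $G$ since $\sigma$ is a morphism and $(H_1\times K_1)\cdot\sigma(g)$ is closed by hypothesis. Now regard $A:=H\times K\times\{1\}$ as a subgroup of $\widehat{G}$ and apply Lemma~\ref{lem:closedhorbits}(ii) with ambient group $\widehat{G}$, point $g$ and stabilizer $\widehat{K}:=\widehat{G}_g$: since $\widehat{G}\cdot g$ is closed, $A\cdot g=(H\times K)\cdot g$ is closed iff $A\widehat{K}$ is closed in $\widehat{G}$. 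The decisive step is to verify this. Writing $\Sigma:=(\sigma\times\sigma)^{-1}\bigl((H_1\times K_1)_{\sigma(g)}\bigr)=\{(h,k)\in H\times K\mid hgk^{-1}\in gZ\}$ and $\theta\colon\Sigma\ra Z$, $\theta(h,k)=g^{-1}hgk^{-1}$, a direct computation shows that $\theta$ is a homomorphism and that $A\widehat{K}=(H\times K)\times\theta(\Sigma)$; since the image of a homomorphism of algebraic groups is a closed subgroup, $A\widehat{K}$ is closed, and (ii) follows.

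The routine parts are the verifications that $\psi$ (in part (i)) and $\theta$ (in part (ii)) are well-defined homomorphisms and that $\sigma(H)\cap\sigma(M)=\sigma(H\cap MZ)$. The genuinely delicate point, which I expect to be the main obstacle, is that $\sigma$ does \emph{not} commute with intersection, so the downstairs stabilizer corresponds to $H\cap MZ$ rather than $H\cap M$. The two multiplicative-type reductivity facts are precisely what bridge this gap in (i), while in (ii) the same gap is absorbed by enlarging the group by the central torus $Z$ and reducing the closedness of the orbit to the closedness of the subgroup $A\widehat{K}$, which is guaranteed by $\theta$ being a homomorphism.
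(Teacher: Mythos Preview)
Your proof is correct. For part (i) your argument is essentially the paper's, transported via $\phi_g$ into $G$: both produce a group sitting between the upstairs and downstairs stabilizers, related to each by a quotient of multiplicative type, and then invoke the standard fact that such quotients preserve reductivity. (The paper works with $A=(\sigma\times\sigma)^{-1}\bigl((H_1\times K_1)_{\sigma(g)}\bigr)$ inside $\widetilde{H}\times\widetilde{K}$ and the map $\psi(\widetilde h,\widetilde k)=g^{-1}\widetilde h g\widetilde k^{-1}$; your $H\cap MZ$ and the map $mz\mapsto z(M\cap Z)$ are the same data viewed inside $G$.)

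For part (ii) the approaches genuinely diverge. The paper argues via the Hilbert--Mumford theorem: take any destabilising $(\lambda,\mu)\in Y(H\times K)$, push it through $\sigma$, use closedness downstairs to see that the limit $\sigma(g')$ lies in $(H_1\times K_1)\cdot\sigma(g)$, and then invoke an external result (\cite[Cor.~3.5(ii)]{GIT}) about lifting orbit membership through a geometric quotient by the commuting $Z(G)^0$-action to conclude $g'\in (H\times K)\cdot g$. Your route is more self-contained: you enlarge the acting group to $\widehat G=H\times K\times Z$, observe that $\widehat G\cdot g=\sigma^{-1}\bigl((H_1\times K_1)\cdot\sigma(g)\bigr)$ is closed, and then reduce via Lemma~\ref{lem:closedhorbits}(ii) to the closedness of $A\widehat K=(H\times K)\times\theta(\Sigma)$, which holds because $\theta$ is a homomorphism. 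This avoids both Hilbert--Mumford and the external citation, at the cost of a short explicit calculation; the paper's argument, by contrast, plugs directly into the cocharacter machinery already in play in Section~\ref{sec:dblecosets}.
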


\begin{proof}
(i). Let $\widetilde{H}= HZ(G)^0$ and let $\widetilde{K}= KZ(G)^0$.  Let $A= (\sigma\times \sigma)^{-1}((H_1\times K_1)_{\sigma(g)})$, a subgroup of $\widetilde{H}\times \widetilde{K}$.  Define $\psi\colon A\ra G$ by $\psi(\widetilde{h}, \widetilde{k})= g^{-1}\widetilde{h}g\widetilde{k}^{-1}$.  A short calculation shows that $\psi$ gives a homomorphism from $A$ to $Z(G)^0$, with kernel $(H\times K)_g$.  Moreover, $\sigma\times \sigma$ gives an epimorphism from $A$ to $(H_1\times K_1)_{\sigma(g)}$, with kernel $Z(G)^0\times Z(G)^0$.  Part (i) now follows.

(ii). Let $g\in G$ and suppose $(H_1\times K_1)\cdot\sigma(g)$ is closed.  
Let $(\lambda,\mu)\in Y(H\times K)$ such that 
$g':= \lim_{a\to 0} \lambda(a)g\mu(a)^{-1}$ exists.  
Set $g_1= \sigma(g)$, $g_1'= \sigma(g')$.  
Let $\lambda_1= \sigma\circ\lambda\in Y(H_1)$ and $\mu_1 = \sigma\circ\mu \in Y(K_1)$; then $g_1'= \lim_{a\to 0} \lambda_1(a)g_1\mu_1(a)^{-1}$.  
By hypothesis, $g_1'$ is $(H_1\times K_1)$-conjugate to $g_1$.  
Now the group $Z(G)^0$ acts on $G$ by right inverse multiplication, and we can identify $\sigma$ with the canonical projection to the quotient.  
The orbits of $Z(G)^0$ all have the same dimension, so $\sigma$ is a geometric quotient.  Moreover, the $Z(G)^0$-action commutes with the $(H\times K)$-action, so $H\times K$ acts on $G_1$.  
By construction, $(h,k)\cdot \sigma(x)= (\sigma(h),\sigma(k))\cdot \sigma(x)$ for all $x\in G$, $h\in H$ and $k\in K$.  In particular, $g_1'$ is $(H\times K)$-conjugate to $g_1$.  
It follows from \cite[Cor.\ 3.5(ii)]{GIT} that $g'$ is $(H\times K)$-conjugate to $g$.  Hence $(H\times K)\cdot g$ is closed.  
This proves (ii).
\end{proof}

\begin{lem}
\label{lem:oneclass}
 Suppose $G$, $H$ and $K$ are connected.  Let $\lambda\in Y(H)$. Suppose there exists a nonempty subset $C$ of $G$ such that 
 $(H\times K)\cdot C$ is open and has the following property: 
 for all $g\in C$, there exists $\tau_g= (\lambda,\mu_g)\in Y(H\times K)$ 
 such that $\tau_g$ destabilizes $g$.  
 \begin{itemize}
 \item[(i)] There exists $g_0\in G$ such that 
 $\lambda\in Y(g_0Kg_0^{-1})$ and $(H\times g_0Kg_0^{-1})\cdot P_\lambda$ 
 is dense in $G$.  
 Moreover, for all $g\in P_\lambda$, the cocharacter $(\lambda,\lambda)$ of 
 $H\times g_0Kg_0^{-1}$ destabilizes $g$.
 \item[(ii)] Suppose in addition that $\tau_g$ fixes $g$ for all $g\in C$.  Then $(H\times g_0Kg_0^{-1})\cdot L_\lambda$ is dense in $G$, and $(\lambda,\lambda)$ fixes every $l\in L_\lambda$.
\end{itemize}
\end{lem}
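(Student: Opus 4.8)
The plan is to use Lemma~\ref{lem:destab} to pin down exactly which $g\in G$ can be destabilized by a cocharacter with fixed $H$-component $\lambda$, and then to reduce both density assertions to an irreducibility argument once a finiteness result about cocharacters is in place.

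First I would handle the easy content. For $g\in C$ write $\tau_g=(\lambda,\mu_g)$ and set $g_0(g)=\lim_{a\to 0}\lambda(a)g\mu_g(a)^{-1}$. By Lemma~\ref{lem:destab}, $\mu_g=g_0(g)^{-1}\cdot\lambda$ and $gg_0(g)^{-1}\in R_u(P_\lambda)$; the first identity says precisely that $g_0(g)^{-1}\lambda(a)g_0(g)\in K$ for all $a$, i.e.\ $\lambda\in Y(g_0(g)Kg_0(g)^{-1})$. I introduce the closed set $M=\{g_0\in G\mid \lambda\in Y(g_0Kg_0^{-1})\}$; then any $g_0(g)$ exhibits the $g_0$ required for the existence claim in (i). The ``moreover'' clauses are immediate once $\lambda\in Y(K')$ with $K'=g_0Kg_0^{-1}$: the pair $(\lambda,\lambda)\in Y(H\times K')$ destabilizes every $g\in P_\lambda$ because $\lim_{a\to 0}\lambda(a)g\lambda(a)^{-1}$ exists by the definition of $P_\lambda$, and $(\lambda,\lambda)$ fixes every $\ell\in L_\lambda=C_G(\lambda)$ pointwise.

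For the density statement in (i) I note that $(H\times K')\cdot P_\lambda=HP_\lambda K'$ is dense if and only if $HP_\lambda g_0K$ is dense (after right translation by $g_0$), so it suffices to produce a single $g_0\in M$ with $HP_\lambda g_0K$ dense. Since $(H\times K)\cdot C$ is open in the irreducible $G$ it is dense, and the computation above shows $C\subseteq R_u(P_\lambda)M\subseteq P_\lambda M$, whence $\overline{HP_\lambda M K}=G$. In case (ii) the extra hypothesis that $\tau_g$ fixes $g$ forces $g_0(g)=g$ and $gg_0(g)^{-1}=1$, so that $C\subseteq M$ directly and $\overline{HMK}=G$; the goal is then to find $g_0\in M$ with $HL_\lambda g_0K$ dense.

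The main obstacle, and the heart of the proof, is to cut the a priori infinite family $\{P_\lambda g_0\mid g_0\in M\}$ (resp.\ $\{L_\lambda g_0\}$) down to finitely many pieces. The key observation is that $M$ is stable under left multiplication by $L_\lambda$ and right multiplication by $K$, and that $L_\lambda g_0K=L_\lambda g_0'K$ whenever the cocharacters $g_0^{-1}\cdot\lambda$ and $g_0'^{-1}\cdot\lambda$ of $K$ are $K$-conjugate; thus the $(L_\lambda,K)$-double cosets comprising $M$ are indexed by the $K$-conjugacy classes of cocharacters $\lambda'\in Y(K)$ that are $G$-conjugate to $\lambda$. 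I claim there are only finitely many such classes: every such $\lambda'$ is $K$-conjugate into $Y(T_K)$ for a fixed maximal torus $T_K$ of $K$, and choosing a maximal torus $T$ of $G$ with $T_K\subseteq T$, the standard fact that cocharacters of $T$ are $G$-conjugate exactly when they are $N_G(T)$-conjugate shows that $\{\nu\in Y(T)\mid \nu\text{ is }G\text{-conjugate to }\lambda\}$ is a single finite Weyl-group orbit, so its intersection with the lattice $Y(T_K)$ is finite. Consequently $M=\bigcup_{i=1}^{N}L_\lambda g_0^{(i)}K$, whence $HP_\lambda M K=\bigcup_i HP_\lambda g_0^{(i)}K$ (using $P_\lambda L_\lambda=P_\lambda$) and $HMK=\bigcup_i HL_\lambda g_0^{(i)}K$ are finite unions whose total closure is the irreducible $G$; so one of the finitely many sets $\overline{HP_\lambda g_0^{(i)}K}$ (resp.\ $\overline{HL_\lambda g_0^{(i)}K}$) must equal $G$, producing the desired $g_0=g_0^{(i)}\in M$ for (i) and (ii) respectively. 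I expect the verification of this finiteness claim, together with the clean identification of the double-coset indexing, to be the only non-routine step; the rest is bookkeeping with limits and translations.
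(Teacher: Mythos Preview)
Your argument is correct and close in spirit to the paper's, but the packaging is genuinely different and a bit cleaner. The paper first base-changes to arrange $\lambda\in Y(K)$, then for each $g\in C$ conjugates $\mu_g$ inside $K$ to commute with $\lambda$, fixes a single maximal torus $T$ of $G$ containing both, and shows that the translated element lands in $P_\lambda n_i$ for some Weyl-group representative $n_i$; the finiteness comes from the finitely many $n_i$. This creates a small wrinkle: not every $n_i$ satisfies $\lambda\in Y(n_iKn_i^{-1})$, so the paper needs an extra constructibility argument to pick a $g'\in C$ lying only in the dense pieces, thereby matching ``$(H\times K)\cdot(P_\lambda n_i)$ is dense'' with ``$\lambda\in Y(n_iKn_i^{-1})$''. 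Your introduction of $M=\{g_0\mid \lambda\in Y(g_0Kg_0^{-1})\}$ and its decomposition into finitely many $(L_\lambda,K)$-double cosets sidesteps this, since every representative $g_0^{(i)}$ already lies in $M$ by construction; the Weyl-group finiteness enters in the same way (via $Y(T_K)\subseteq Y(T)$ meeting a single $W$-orbit), but you never have to go back and check compatibility. The cost is that you must verify the bijection between $(L_\lambda,K)$-double cosets in $M$ and $K$-classes of cocharacters of $K$ that are $G$-conjugate to $\lambda$, which you do correctly.
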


\begin{proof}
 Fix $v\in C$ and let 
 $v_0:= \lim_{a\to 0} \tau_v(a)\cdot v= \lim_{a\to 0} \lambda(a)v\mu_v(a)$.  
 Then $\lambda= v_0\cdot \mu_v$ by Lemma~\ref{lem:destab}, so 
 $\lambda\in Y(v_0Kv_0^{-1})$.  
 The equivariance of $r_{v_0}$ implies that for any $w\in C$, 
 $(\lambda, v_0\cdot \mu_w)\in Y(H\times v_0Kv_0^{-1})$ destabilizes
 $wv_0^{-1}$ to $w_0v_0^{-1}$, where $w_0:= \lim_{a\to 0} \tau_w(a)\cdot w$.  
 By Lemma~\ref{lem:basechange}, we can replace $K$ with
 $v_0Kv_0^{-1}$ and $C$ with $Cv_0^{-1}$.  
 So without loss we assume that $\lambda\in Y(K)$.
 
 Let $g\in C$.  By hypothesis, 
 $\tau_g = (\lambda,\mu_g)$ destabilizes $g$.
 As ${\rm im}(\lambda)$ is contained in $K$, there exists $k\in K$ such that 
 $\mu:= k\cdot \mu_g$ commutes with $\lambda$.  
 Set $g_1= gk^{-1} = (1,k)\cdot g$, so that $(\lambda,\mu)$ destabilizes $g_1$.  
 Finally, set $g_2= \lim_{a\to 0} \lambda(a)g_1\mu(a)^{-1}$.  
 Then $\lambda= g_2\cdot \mu$ by Lemma~\ref{lem:destab}.  
 Fix a maximal torus $T$ of $G$ such that $\lambda,\mu\in Y(T)$ and let 
 $n_1,\ldots, n_r\in N_G(T)$ be a set of representatives for the Weyl group $N_G(T)/T$.
 Now $g_2Tg_2^{-1}$ is a maximal torus of $L_\lambda$, so by conjugacy of maximal tori
 in $L_\lambda$, we have $xg_2Tg_2^{-1}x^{-1}= T$ for some $x\in L_\lambda$.  
 Then $xg_2= tn_i$ for some $i$ and some $t \in T$, so $g_2= ln_i$, where $l:= x^{-1}t\in L_\lambda$.  
 By Lemma~\ref{lem:destab}, we have $g_1= ug_2= uln_i$ for some 
 $u\in R_u(P_\lambda)$, so $g_1\in P_\lambda n_i$ and $g = g_1k \in (H\times K)\cdot (P_\lambda n_i)$. 
 Since $g\in C$ was arbitrary, it now follows that $\bigcup_{i=1}^r (H\times K)\cdot (P_\lambda n_i)$ 
 contains $(H\times K)\cdot C$ and, since $G$ is connected, 
 $(H\times K)\cdot (P_\lambda n_i)$ is dense in $G$
 for at least one $i$. 
 Note also that $\lambda= g_2\cdot \mu= ln_i\cdot \mu$, so 
 $\mu= n_i^{-1}l^{-1}\cdot \lambda= n_i^{-1}\cdot \lambda$, 
 so $\lambda= n_i\cdot \mu\in Y(n_iKn_i^{-1})$.

 Keeping the notation in the previous paragraph, for each $i$, 
 $(H\times K)\cdot (P_\lambda n_i)$ is constructible, 
 so $(H\times K)\cdot (P_\lambda n_i)$ is either dense or contained 
 in a proper closed subset of $G$.  
 Thus the union of those subsets $(H\times K)\cdot (P_\lambda n_i)$ 
 that are dense contains an open subset of $G$; note also that this union is $(H\times K)$-stable.
 Since $(H\times K)\cdot C$ is open,
 we can find $g'\in C$ such that for any $i$, 
 if $g'\in (H\times K)\cdot (P_\lambda n_i)$ then $(H\times K)\cdot (P_\lambda n_i)$ is dense.  
 By the arguments in the paragraph above applied to $g'$, 
 there exists $i$ such that $g'\in (H\times K)\cdot(P_\lambda n_i)$ and for this $i$ we have 
 $\lambda= n_i\cdot \mu\in Y(n_iKn_i^{-1})$; moreover,
 $(H\times K)\cdot (P_\lambda n_i)$ is dense by construction.  It follows that $(H\times n_iKn_i^{-1})\cdot P_\lambda= r_{n_i}((H\times K)\cdot(P_\lambda n_i))$ is dense in $G$, 
so the first assertion of part (i) follows with $g_0 = n_i$.
It is obvious that $(\lambda,\lambda)$ destabilizes $g$ for all 
$g \in P_\lambda$, so we have proved part (i).
 
 If $g\in C$ and $\tau_g$ fixes $g$ then $(\lambda,\mu)$ fixes $g_1$, so $g_1= g_2\in L_\lambda n_i$ for some $i$.  The first assertion of (ii) follows by a similar argument to that above but applied to $\bigcup_{i=1}^r (H\times K)\cdot (L_\lambda n_i)$, and the second assertion is again obvious.
\end{proof}

\begin{proof}[Proof of Theorem~\ref{thm:closedcrit}]
 We have shown already that (ii) and (iii) are equivalent, so it is enough to prove that (i) and (ii) are equivalent.  First note that for any $g\in G$, $(H\cap K)\cdot g$ is closed if and only if $(H\cap K)^0\cdot g= (H^0\cap K^0)^0\cdot g$ is closed, and $H\cap gKg^{-1}$ is reductive if and only if $(H\cap gKg^{-1})^0= (H^0\cap gK^0g^{-1})^0$ is reductive, which is the case if and only if $H^0\cap gK^0g^{-1}$ is reductive.  Hence we can assume that $H$ and $K$ are connected.  Moreover, we can assume by Lemma~\ref{lem:extn} that $k$ is uncountable.
 
  The implication (i)$\implies$(ii) follows immediately from Lemma~\ref{lem:quotientorbit}(ii).  For the reverse implication, we use induction on ${\rm dim}(G)$.  Suppose generic stabilizers are reductive.  The result is immediate if ${\rm dim}(G)= 0$.  If $G$ is not semisimple then let $G_1$, $\sigma$, $H_1$ and $K_1$ be as in Lemma~\ref{lem:semisimple}.  Then generic stabilizers of $H_1\times K_1$ on $G_1$ are reductive, by Lemma~\ref{lem:semisimple}(i).  Since ${\rm dim}(G_1)< {\rm dim}(G)$, it follows by induction that generic orbits of $H_1\times K_1$ on $G_1$ are closed.  Part (ii) of Lemma~\ref{lem:semisimple} now implies that generic orbits of $H\times K$ on $G$ are closed, so we are done.  Hence we can assume that $G$ is semisimple.
  
 First we consider the case when generic stabilizers of $H\times K$ on $G$ are positive-dimensional.  Then all stabilizers of $H\times K$ on $G$ are positive-dimensional, by semi-continuity of stabilizer dimension.  For each $g\in G$ such that $(H\times K)_g$ is reductive, choose a nontrivial cocharacter $\tau_g\in Y((H\times K)_g)$.  The fixed point set $G^{\tau_g}:= G^{{\rm im}(\tau_g)}$ is closed, so $C_g:= (H\times K)\cdot G^{\tau_g}$ is constructible.  Since generic stabilizers of $H\times K$ on $G$ are reductive, the constructible sets $C_g$ for $g\in G$ such that $(H\times K)_g$ is reductive cover an open dense subset $U$ of $G$, by \cite[Thm.\ 1.1]{martin3}.  There are only countably many of these sets, as $H\times K$ has only countably many conjugacy classes of cocharacters.  By \cite[Cor.\ 2.5]{martin3}, $C_{\widetilde{g}}$ is dense in $G$ for some $\widetilde{g}\in G$.  Hence there exists $\tau= (\lambda,\mu)\in Y(H\times K)$ such that for generic $g\in G$, $g$ is fixed by an $(H\times K)$-conjugate of $\tau$.  It follows from Lemma~\ref{lem:oneclass} that for some $g_0\in G$, $\lambda\in Y(g_0Kg_0^{-1})$ and $(H\times g_0Kg_0^{-1})\cdot L_\lambda$ is dense in $G$.  By Lemma~\ref{lem:basechange}, there is no harm in assuming that $g_0Kg_0^{-1}= K$---i.e., that $\lambda \in Y(K)$ and $(H\times K)\cdot L_\lambda$ is dense in $G$---and we shall do this for notational convenience.
 
To prove that generic $(H\times K)$-orbits on $G$ are closed, it is therefore enough to show that $(H\times K)\cdot l$ is closed for generic $l\in L_\lambda$.  Let $H_2= L_\lambda(H)$ and let $K_2= L_\lambda(K)$; then $H_2\times K_2= L_{(\lambda,\lambda)}(H\times K)$.  Consider the double coset action of $H_2\times K_2$ on $L_\lambda$.  Let $l\in L_\lambda$.  Then $(\lambda,\lambda)$ fixes $l$, so $(H_2\times K_2)_l= L_{(\lambda,\lambda)}((H\times K)_l)$, which is reductive if $(H\times K)_l$ is.  Hence generic stabilizers of $(H_2\times K_2)$ on $L_\lambda$ are reductive.  As $G$ is semisimple, ${\rm dim}(L_\lambda)< {\rm dim}(G)$, so generic $(H_2\times K_2)$-orbits on $L_\lambda$ are closed by induction.  It follows from Remark~\ref{rems:failure}(iii) that $(H\times K)\cdot l$ is closed for generic $l\in L_\lambda$, so we are done as $(H\times K)\cdot L_\lambda$ is dense in $G$.
 
 Now consider the case when generic stabilizers of $H\times K$ on $G$ are finite.  Suppose generic $(H\times K)$-orbits on $G$ are not closed.  Then $G_{\rm cl}$ is a proper closed subset of $G$, so the union of the non-closed orbits contains a nonempty open subset of $G$.  For each $g\in G$ such that $(H\times K)\cdot g$ is not closed, choose nontrivial $\tau_g\in Y(H\times K)$ such that $\tau_g$ destabilizes $g$.  By an argument similar to the one in the positive-dimensional case above, there exist $\lambda\in Y(H)$ and $g_0\in G$ such that $\lambda\in Y(g_0Kg_0^{-1})$ and $(H\times g_0Kg_0^{-1})\cdot P_\lambda$ is dense in $G$.  As before, we can assume that $g_0Kg_0^{-1}= K$.  Now $R_u(P_{-\lambda})(H)P_\lambda(H)$ and $P_\lambda(K) R_u(P_{-\lambda})(K)$ are nonempty open subsets of $H$ and $K$ respectively \cite[Prop.~14.21(iii)]{borel}, so $R_u(P_{-\lambda})(H)P_\lambda R_u(P_{-\lambda})(K)$ is dense in $G$, as $HP_\lambda K$ is.  It follows that ${\rm dim}(R_u(P_{-\lambda})(H))+ {\rm dim}(R_u(P_{-\lambda})(K))+ {\rm dim}(P_\lambda)\geq {\rm dim}(G)$, so ${\rm dim}(R_u(P_{-\lambda})(H))+ {\rm dim}(R_u(P_{-\lambda})(K))\geq {\rm dim}(G)- {\rm dim}(P_\lambda)= {\rm dim}(R_u(P_\lambda))$.
 
 By hypothesis, we can choose $g\in P_\lambda$ such that $(H\times K)_g$ is finite.  Write $g= ul$, where $l= L_\lambda$ and $u\in R_u(P_\lambda)$; then $l= \lim_{a\to 0} (\lambda, \lambda)(a)\cdot g$.  We show that $l$ is $(H\times K)$-conjugate to $g$.  Consider the double coset action of $R_u(P_\lambda(H))\times R_u(P_\lambda(K))$ on $G$.  Let $O= (R_u(P_\lambda(H))\times R_u(P_\lambda(K)))\cdot g$ and consider the orbit map $\kappa_g\colon R_u(P_\lambda(H))\times R_u(P_\lambda(K))\ra O$ given by $\kappa_g(h,k)= hgk^{-1}$.  It is clear that $O\subseteq R_u(P_\lambda)l$.  Note that $O$ is closed, since $O$ is the orbit of an action of a unipotent group on an affine variety \cite[Prop.\ 4.10]{borel}.  The stabilizer of $g$ in $R_u(P_\lambda(H))\times R_u(P_\lambda(K))$ is finite, since $(H\times K)_g$ is finite, so $O$ has dimension ${\rm dim}(R_u(P_\lambda(H)))+ {\rm dim}(R_u(P_\lambda(K)))$.  Now ${\rm dim}(R_u(P_\lambda(H)))+ {\rm dim}(R_u(P_\lambda(K)))= {\rm dim}(R_u(P_{-\lambda})(H))+ {\rm dim}(R_u(P_{-\lambda})(K))\geq {\rm dim}(R_u(P_\lambda))$, and since $O$ is closed, this forces $O$ to be the whole of $R_u(P_\lambda)l$.  Hence there exists $(h,k)\in R_u(P_\lambda(H))\times R_u(P_\lambda(K))$ such that $(h,k)\cdot g= l$, as required.
 
 Now $(H\times K)_l$ is finite, since $l$ is $(H\times K)$-conjugate to $g$.  But $(\lambda,\lambda)$ fixes $l$, a contradiction.  We deduce that generic $(H\times K)$-orbits on $G$ are closed after all.  This completes the proof.
\end{proof}

\begin{rem}
 One can prove the following more general statement of Theorem~\ref{thm:closedcrit} for non-connected reductive $G$.  Let $G_1,\ldots, G_r$ be the minimal subsets of $G$ having the property that each $G_i$ is $(H\times K)$-stable and contains some connected component of $G$.  Each $G_i$ is a union of certain connected components of $G$; if $H$ and $K$ are connected then the $G_i$ are precisely the connected components of $G$.  Here is our result: for each $i$, $G_i$ has good dimension for the $(H\times K)$-action if and only if generic stabilizers of $H\times K$ on $G_i$ are reductive if and only if $H\cap gKg^{-1}$ is reductive for generic $g\in G_i$.  To see this, note first that we can assume that $H$ and $K$ are connected, by the proof of Theorem~\ref{thm:closedcrit}; hence we can assume that each $G_i$ is a connected component of $G$.  We can now choose $g\in G$ such that $G_ig= G^0$, and use the map $r_g$ to translate the case of $G_i$ into the case of the connected group $G^0$ (cf.\ the proof of Lemma~\ref{lem:basechange}).  We leave the details to the reader.
\end{rem}

We record a useful corollary.

\begin{cor}
\label{cor:torus}
 Suppose one of $H$ and $K$ is a torus.  Then $G$ has good dimension for the $(H\times K)$-action.
\end{cor}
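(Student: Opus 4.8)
The plan is to verify condition (iii) of Theorem~\ref{thm:closedcrit} and then read off condition (i). The key observation is elementary: whichever of $H$ and $K$ is a torus, the intersection $H\cap gKg^{-1}$ is always a closed subgroup of a torus. Indeed, if $K$ is a torus then so is $gKg^{-1}$ for every $g\in G$, and $H\cap gKg^{-1}$ is a closed subgroup of $gKg^{-1}$; while if $H$ is a torus then $H\cap gKg^{-1}$ is a closed subgroup of $H$. In either case $H\cap gKg^{-1}$ is diagonalizable, since any closed subgroup of a diagonalizable group is again diagonalizable. A diagonalizable group has trivial unipotent radical, so $H\cap gKg^{-1}$ is reductive.

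Thus condition (iii) of Theorem~\ref{thm:closedcrit} holds not merely for generic $g$ but for \emph{every} $g\in G$. When $G$ is connected, Theorem~\ref{thm:closedcrit} then immediately gives that $G$ has good dimension for the $(H\times K)$-action, which is condition (i).

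For the general case in which $G$ need not be connected, I would appeal to the generalization of Theorem~\ref{thm:closedcrit} recorded in the Remark following its proof. With the notation there, let $G_1,\ldots,G_r$ be the minimal $(H\times K)$-stable subsets of $G$, each a union of connected components of $G$; these are open and closed in $G$, so $G$ has good dimension if and only if each $G_i$ does. Since $H\cap gKg^{-1}$ is reductive for every $g\in G_i$ by the observation of the first paragraph, the Remark shows each $G_i$ has good dimension, and hence so does $G$.

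The proof is genuinely short, and there is no real obstacle: the entire content is the standard fact that a closed subgroup of a torus is diagonalizable and therefore reductive. The only bookkeeping is to treat on an equal footing the case where $H$ is the torus and the case where $K$ is, and to pass from the connected statement of Theorem~\ref{thm:closedcrit} to a possibly non-connected $G$ via the Remark.
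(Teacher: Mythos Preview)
Your proof is correct and follows exactly the same approach as the paper: the paper's proof is the single sentence ``This is immediate from Theorem~\ref{thm:closedcrit}, since any subgroup of a torus is reductive.'' Your treatment of the non-connected case via the Remark is a minor elaboration that the paper does not spell out, but the core argument is identical.
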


\begin{proof}
 This is immediate from Theorem~\ref{thm:closedcrit}, since any subgroup of a torus is reductive.
\end{proof}

We now consider a concrete example; our methods allow us to deal with arbitrary characteristic.  Note that we use Theorem~\ref{thm:mainthm} in parts (a) and (b) below.

\begin{ex}
\label{ex:B2}
 Let $G$ be simple of type $B_2$ and fix a maximal torus $T$ of $G$.  Let $A$ be the subgroup of $G$ generated by the long root groups with respect to $T$.  If $p= 2$ then let $B$ be the subgroup of $G$ generated by the short root groups with respect to $T$.  The groups $A$ and $B$ are normalized by $N_G(T)$.
 
 (a). Let $p$ be arbitrary and let $H= K= A$.  Since ${\rm dim}(G)= 10$ and ${\rm dim}(H)= {\rm dim}(K)= 6$, ${\rm dim}(H\times K)_g\geq 2$ for all $g\in G$, with equality if and only if $(H\times K)\cdot g$ is dense in $G$.  Let $\lambda\in Y(T)$ be nontrivial.  We show first that for generic $l\in L_\lambda$, $(H\times K)\cdot l$ is closed.  If $L_\lambda= T$ or $L_\lambda$ is a long-root Levi subgroup (that is, a Levi subgroup $L$ such that $[L,L]$ is the subgroup of type $A_1$ corresponding to some long root) then $L_\lambda\leq A$, so $(H\times K)\cdot l= A$ is closed.  Note that in this case, $(H\times K)\cdot L_\lambda= A$ is not dense in $G$.
 
 So suppose $L_\lambda$ is a short-root Levi subgroup.  As in the positive-dimensional case in the proof of Theorem~\ref{thm:closedcrit}, it is enough to show that $(L_\lambda(H)\times L_\lambda(K))\cdot l$ is closed for generic $l\in L_\lambda$.  But this follows from Corollary~\ref{cor:torus}, since $L_\lambda(H)\times L_\lambda(K)= T\times T$ is a torus.  Moreover, in this case the quotient space $L_\lambda/(T\times T)$ is positive-dimensional, as ${\rm dim}(T\times T)= 4= {\rm dim}(L_\lambda)$ and $(T\times T)_l$ has dimension at least 1 for all $l\in L$ (since $(\lambda,\lambda)$ fixes $l$).  It follows that the quotient space $G/(H\times K)$ is positive-dimensional.  To see this, let $S$ be the image of $(\lambda, \lambda)$; note that $T\times T\leq N_{H\times K}(S)\leq N_{H\times K}(T\times T)$.  Now consider the maps $L_\lambda/(T\times T)\ra L_\lambda/N_{H\times K}(S)\ra G/(H\times K)$.  The first map is finite as $T\times T$ has finite index in $N_{H\times K}(S)$, while the second is finite by Theorem~\ref{thm:mainthm} (applied to the subgroup $S$ of $H\times K$), so ${\rm dim}(G/(H\times K))\geq {\rm dim}(L_\lambda/(T\times T))\geq 1$, as claimed.
 
 Next we show that for generic $g\in G$, $(H\times K)_g$ contains a nontrivial torus.  Suppose not.  Then for generic $g\in G$, $(H\times K)_g^0$ is a unipotent subgroup of $H$ of dimension at least 2, so $(H\times K)_g^0$ is a maximal unipotent subgroup of $H$ and has dimension 2.  But then the orbit $(H\times K)\cdot g$ is dense in $G$, so $G/(H\times K)$ is a single point, which is a contradiction.
 
 It follows from the proof of the positive-dimensional case of Theorem~\ref{thm:closedcrit} that $(H\times n_iKn_1^{-1})\cdot L_\lambda$ is dense in $G$ for some nontrivial $\lambda\in Y(T)$ and some $i$.  But $N_G(T)$ normalizes $K$, so $(H\times K)\cdot L_\lambda$ is dense in $G$ (and hence $L_\lambda$ is a short-root Levi subgroup of $G$).  We deduce from the discussion above that generic $(H\times K)$-orbits in $G$ are closed.  Moreover, we see that the map $L_\lambda/(T\times T)\ra G/(H\times K)$ is finite and dominant, hence surjective.  A simple calculation shows that generic stabilizers of $T\times T$ on $L_\lambda$ have dimension 1, so ${\rm dim}(L_\lambda/(T\times T))= 1$, which implies that ${\rm dim}(G/(H\times K))= 1$.  Hence generic stabilizers of $H\times K$ on $G$ are reductive groups of dimension 3.  It follows that for generic $g\in G$, $(H\times K)_g^0$ is of type $A_1$. 

\smallskip
 (b). Let $p= 2$ and let $H= K= B$.  Then generic orbits of $H\times K$ on $G$ are closed and for generic $g\in G$, $(H\times M)_g^0$ is of type $A_1$.  The proof is similar to case (a).
 
\smallskip 
 (c). Let $p= 2$, let $H= A$ and let $K= B$.  Consider the stabilizer $(H\times K)_1$, which is isomorphic via the map $\phi_1$ to $H\cap K$.  It is easily seen that $H\cap K= T$, so $(H\times K)_1= \{(t,t)\mid t\in T\}$.  It follows that ${\rm dim}((H\times K)\cdot 1)= {\rm dim}(H\times K)- {\rm dim}(T)= 12- 2= 10= {\rm dim}(G)$, so $(H\times K)\cdot 1$ is dense in $G$ and generic stabilizers have dimension 2 and are reductive.  It follows from Theorem~\ref{thm:closedcrit} that generic orbits are closed. Hence $(H\times K)\cdot 1$ is closed and $H\times K$ acts transitively on $G$.  (This conclusion also follows from \cite[Thm.\ A]{brundan3}, since $A$ and $B$ are maximal connected subgroups of $G$.)
\end{ex}

We finish the section with a further example.

\begin{ex}
\label{ex:E7}
 Suppose $p\neq 2$ and let $G$ be simple and of rank $r$.  Let $\tau\in {\rm Aut}(G$) be an involution that inverts a maximal torus of $G$---such a $\tau$ always exists, by \cite[Lem.\ 3.6]{BGS}---and let $H= K= C_G(\tau)$.  Then $(H\times K)_g= H\cap gKg^{-1}$ is a finite group of order $2^r$ for generic $g\in G$ \cite[Thm.\ 9]{BGS}, so $G$ has good dimension for the $H\times K$-action, by Theorem~\ref{thm:closedcrit}.  (Note that in \cite[Sec.\ 3.2]{BGS} one considers the action of $H$ by left multiplication on $G/K$ rather than the double coset action of $H\times K$ on $G$, but the arguments carry over easily to our setting.  See also \cite[Ex.\ 8.4]{martin3}.)
 
 We now consider a striking feature of this example: namely that, although generic stabilizers for the double coset action are nontrivial, there is a unique $(H\times K)$-orbit $O$ consisting of elements with trivial stabilizer \cite[Thm.\ 9]{BGS} (compare Example~\ref{ex:noslice}).  If $p= 0$ then $O$ cannot be closed: for otherwise there would exist an \'etale slice through any element of $O$, so every element in some open neighbourhood of $O$ would have trivial stabilizer by the argument of Example~\ref{ex:noslice}, a contradiction.  More generally, for arbitrary $p\neq 2$ the argument of \cite[Ex.\ 8.4]{martin3} implies that $G$ has a principal stabilizer $A$ which is a finite group of order $2^r$, and it follows from Theorem~\ref{thm:mainthm} that if $g'\in G$ and $(H\times K)\cdot g'$ is closed then $(H\times K)_{g'}$ contains a conjugate of $A$.  Hence we see again that $O$ cannot be closed.
 
 We give a direct proof of this.  The orbit $O$ is of the form $(H\times K)\cdot g$, where $g\in G$ has the property that $u:= \tau \tau^g$ is a regular unipotent element of $G$ and $u$ is inverted by $\tau$ (see \cite[Prop.\ 3.1]{BGS}).  In fact, we can choose $g$ to be a regular unipotent element of $G$ such that $g^2= u$ and $\tau$ inverts $g$ (take $g$ to be $u^s$ if $p> 0$, where $2s\equiv 1\ {\rm mod}\ |u|$).  Set $U= \ovl{\langle g\rangle}$; then $\tau$ normalizes $U$, as $\tau$ inverts $g$.  There exists $\lambda\in Y(G)$ such that $\lim_{a\to 0} \lambda(a)g'\lambda(a)^{-1}= 1$ for all $g'\in U$.  We can choose $\lambda$ to be optimal in the sense of \cite[Defn.\ 4.4 and Thm.\ 4.5]{GIT} (cf.\ Section~\ref{subsec:reductiveaffine}).  Then $\tau$ normalises $P_\lambda$.  Now $N_{{\rm Aut}(G)}(P_\lambda)$ is an R-parabolic subgroup of the reductive group ${\rm Aut}(G)$ \cite[Prop.\ 5.4(a)]{martin1}, so $N_G(P_\lambda)= P_\mu$ for some $\mu\in Y(G)$.  As $\tau\in P_\mu$ and $\langle \tau\rangle$ is linearly reductive, we can choose $\mu$ to centralize $\tau$: that is, we can choose $\mu$ to belong to $Y(H)$.
 
 Let $\sigma= (\mu, \mu)\in Y(H\times K)$.  Then $\lim_{a\to 0} \sigma(a)\cdot g= \lim_{a\to 0} \mu(a)g\mu(a)^{-1}= 1$ since $U\leq R_u(P_\lambda)= R_u(P_\mu)$.  But clearly $1\not\in O$, so $O$ is not closed, as claimed.
\end{ex}

\section{Applications to $G$-complete reducibility}
\label{sec:Gcr}

We finish with some applications of ideas from Sections~\ref{sec:prelims} and \ref{sec:dblecosets} to $G$-complete reducibility.  Our next lemma gives a basic structural result about $G$ and its subgroups which can quickly be
proven using the framework we have now set up; the setting is as in Section~\ref{sec:dblecosets} but more general, since we allow one of the subgroups to be non-reductive (cf.\ \cite{brundan4}).  The argument used is taken from the proof of \cite[Kap. III.2.5, Satz 2]{kraft};
note that although the reference \cite{kraft} works with groups and varieties defined
over the complex numbers, many of the arguments are completely general.
For convenience, we reproduce the details here.

\begin{lem}\label{lem:HK}
Suppose $K$ is a subgroup of $G$ and let $H$ be a reductive subgroup of $G$ that contains a maximal torus of $K$.
Then $HK$ is a closed subset of $G$.
\end{lem}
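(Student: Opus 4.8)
The plan is to transfer the problem into an orbit-closedness statement on an affine variety and then exploit that a maximal torus of $K$ sits inside the stabilizer of the base point. Since $H$ is reductive, $G/H$ is affine by Theorem~\ref{thm:richquotient}. Let $K$ act on $G/H$ by left translation, write $x_0 = \pi_{G,H}(1)$, and note $G_{x_0} = H$. Applying Lemma~\ref{lem:closedhorbits}(i) with $K$ as the acting subgroup, the orbit $K\cdot x_0$ is closed in $G\cdot x_0 = G/H$ if and only if $KH$ is closed in $G$; and $KH$ is closed if and only if $HK = (KH)\inverse$ is, since inversion is a homeomorphism of $G$. Thus it suffices to show $K\cdot x_0$ is closed. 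Moreover, writing $K = \bigsqcup_i K^0 k_i$ gives $HK = \bigcup_i (HK^0)k_i$, a finite union of right translates; as right translation is a homeomorphism and a maximal torus of $K$ is the same as a maximal torus of $K^0$, I may assume $K$ is connected.

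Now let $S$ be a maximal torus of $K$ contained in $H$. Then $S\cdot x_0 = x_0$, so $x_0$ is fixed by $S$, and the crux is to prove that the $K$-orbit of a point fixed by a maximal torus of the acting group is closed. The first ingredient is a Borel subgroup $B$ of $K$ containing $S$: writing $B = S\ltimes R_u(B)$ and using that $S$ fixes $x_0$ and normalizes $R_u(B)$, one computes $B\cdot x_0 = R_u(B)\cdot x_0$, and this orbit is closed because $R_u(B)$ is unipotent and $G/H$ is affine \cite[Prop.\ 4.10]{borel}. The heuristic for the full orbit is a Hilbert--Mumford/Kempf argument: were $K\cdot x_0$ not closed, a one-parameter subgroup $\lambda\in Y(K)$ would carry $x_0$ to the boundary, but $\operatorname{im}(\lambda)$ lies in a maximal torus of $K$, all of which are $K$-conjugate to $S$; after using the freedom in the optimal class (Theorem~\ref{thm:kempf}) to arrange $\operatorname{im}(\lambda)$ to commute with $S$ one would get $\lambda\in Y(S)$, whence $\lim_{a\to 0}\lambda(a)\cdot x_0 = x_0$, contradicting that the limit leaves the orbit.

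The main obstacle is precisely that $K$ need not be reductive, so Kempf's optimal-cocharacter theory is not directly available and the heuristic above must be made rigorous by a different route. My plan is to reduce through the unipotent radical: the $R_u(K)$-orbits on the affine variety $G/H$ are closed, so closedness of $K\cdot x_0$ can be tested on the reductive quotient $K/R_u(K)$, whose maximal torus is the image of $S$ and fixes the image of $x_0$; for this reductive group the torus argument applies verbatim and yields the result. Isolating a characteristic-free substitute for the (possibly non-existent) Levi decomposition of $K$ in order to carry out this descent is the genuinely delicate step, and it is where I expect the real work of the proof to lie.
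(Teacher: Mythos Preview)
Your setup through the closedness of $B\cdot x_0$ (equivalently $BH=UH$ closed in $G$) matches the paper exactly, and your Kempf argument for \emph{reductive} $K$ is fine: with $S$ a maximal torus of the connected reductive group $K$ one has $C_K(S)=S$, so an optimal cocharacter chosen to centralize $S$ must land in $Y(S)$ and hence fix $x_0$. The gap you yourself flag---descending from general $K$ to the reductive quotient $K/R_u(K)$---is real, and the route you sketch does not work without substantial new input. The quotient $(G/H)/R_u(K)$ need not exist as an affine variety (quotients by unipotent groups are badly behaved), so there is nothing on which to run Kempf for $K/R_u(K)$; and even granting a Levi decomposition $K=L\ltimes R_u(K)$ in characteristic~$0$, knowing that $HL$ and $HR_u(K)$ are separately closed does not immediately yield that $HK=HLR_u(K)$ is closed.

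The paper avoids this entirely by passing not to $K/R_u(K)$ but to $K/B$, which is \emph{complete}. Having established that $BH$ is closed (your argument), it observes that the image of the closed set $K\times BH\subseteq K\times G$ under the chain
\[
K\times G\ \xrightarrow{(k,g)\mapsto(k,kg)}\ K\times G\ \xrightarrow{\pi_{K,B}\times\mathrm{id}}\ K/B\times G\ \xrightarrow{\mathrm{pr}_2}\ G
\]
is exactly $KH$; the first map is an isomorphism, the second sends the ($B$-saturated) closed set to a closed set, and completeness of $K/B$ makes $\mathrm{pr}_2$ closed. So the missing idea is to exploit the projective variety $K/B$ rather than attempt any reduction through $R_u(K)$; this replaces your delicate descent with a one-line appeal to properness and requires no Kempf theory at all.
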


\begin{proof}
First suppose that $K$ is unipotent.
The quotient $X=G/H$ is affine and $H$ is the stabilizer in $G$ of the point $x = \pi_{G,H}(1) \in X$.
Since $K$ is unipotent, and all orbits for unipotent groups on affine varieties are closed \cite[Prop.\ 4.10]{borel},
$K\cdot x$ is closed, so $KH$ (and hence $HK$) is closed in $G$ by Lemma~\ref{lem:closedhorbits}.

Now, in the general case, let $T$ be a maximal torus of $K$ contained in $H$ and let $B$ be a Borel subgroup of $K$ containing $T$ with unipotent radical $U$.
Then $UH = BH$ is closed in $G$ by the first paragraph, and the following argument from \cite[Kap. III.2.5, Satz 2]{kraft} gives us what we want.
We have a sequence of morphisms
$$
K\times G \overset{\phi}{\longrightarrow} K\times G \xrightarrow{\rho=\pi_{K,B} \times {\rm id}} K/B \times G \overset{{\rm pr}_2}{\longrightarrow} G
$$
where $\phi(g',g):=(g',g'g)$ for $g' \in K$, $g \in G$, $\pi_{K,B}$ is the quotient morphism $K \to K/B$ and ${\rm pr}_2$ is the projection of $K/B \times G$ onto the second factor.
Let $Y = K \times BH$.
Since $BH$ is closed in $G$, $Y$ is closed in $K \times G$.
Since $\phi$ is an isomorphism of varieties, $\phi(Y)$ is closed in $K\times G$ and therefore $\rho(\phi(Y))$ is closed in $K/B \times G$.
Finally, since $K/B$ is complete, ${\rm pr}_2(\rho(\phi(Y)))$ is closed in $G$.
But it is easy to see that ${\rm pr}_2(\rho(\phi(Y)))=KH$, so we are done.
\end{proof}

The lemma above allows a quick proof of the following result.

\begin{prop}\label{prop:Hxmaxrank}
Suppose $G$ is reductive, $X$ is an affine $G$-variety and $x \in X$. 
If $H$ is a reductive subgroup of $G$ containing a maximal torus of $G_x$, then $H\cdot x$ is closed in $G\cdot x$.
In particular, if $G\cdot x$ is closed in $X$ then $H\cdot x$ is closed in $X$.
\end{prop}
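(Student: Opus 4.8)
The plan is to reduce the proposition entirely to the two results already in hand, namely Lemma~\ref{lem:HK} and Lemma~\ref{lem:closedhorbits}. First I would set $K := G_x$, the stabilizer of $x$ in $G$. The hypothesis states precisely that the reductive subgroup $H$ contains a maximal torus of $K$, which is exactly the situation covered by Lemma~\ref{lem:HK} (note that $K$ need not be reductive, but Lemma~\ref{lem:HK} makes no such demand on $K$). Applying that lemma gives immediately that $HK$ is a closed subset of $G$.

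With the closedness of $HK$ established, the remainder is purely formal. By Lemma~\ref{lem:closedhorbits}(i), the orbit $H\cdot x$ is closed in $G\cdot x$ if and only if $HK$ is closed in $G$, so the first assertion follows at once. For the final statement, Lemma~\ref{lem:closedhorbits}(ii) says that when $G\cdot x$ is closed in $X$, the orbit $H\cdot x$ is closed in $X$ if and only if $HK$ is closed in $G$; since we have already verified the latter, the ``in particular'' clause follows.

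I do not expect any genuine obstacle here: all of the real work has been carried out in proving Lemma~\ref{lem:HK}, where the substantive input is the unipotent case (handled via affineness of $G/H$ together with the fact that orbits of unipotent groups on affine varieties are closed) and the subsequent bootstrap from a Borel subgroup $B$ of $K$ to all of $K$ using completeness of $K/B$. The only point to verify when assembling the proposition is bookkeeping: the maximal-torus hypothesis is phrased for $K = G_x$, which matches the statement of Lemma~\ref{lem:HK} verbatim, so no conjugation or auxiliary reduction is needed, and the proof is essentially a one-line citation of the two lemmas.
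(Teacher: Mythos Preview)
Your proposal is correct and follows exactly the same approach as the paper: apply Lemma~\ref{lem:HK} with $K=G_x$ to obtain that $HG_x$ is closed, then invoke Lemma~\ref{lem:closedhorbits} to conclude. The paper's proof is simply a terser version of what you wrote.
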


\begin{proof}
By Lemma \ref{lem:HK}, under the given hypotheses, $HG_x$ is closed in $G$.
Hence, by Lemma \ref{lem:closedhorbits}, $H\cdot x$ is closed in $G\cdot x$.
\end{proof}

\begin{rem}\label{rems:maxrank}
The argument of Lemma \ref{lem:HK} is used in \cite[Kap.\ III, 2.5, Folgerung 3]{kraft} to show that if $X$ is affine and $G_x$ contains
a maximal torus of $G$, then $G\cdot x$ is closed, a result which has obvious similarities to Proposition \ref{prop:Hxmaxrank}.
\end{rem}

\begin{cor}
Suppose $H$ and $K$ are reductive subgroups of the reductive group $G$.
If $H\cap K$ contains a maximal torus of $H$ or $K$, then $HK$ is closed in $G$ and $H\cap K$ is a reductive group.
\end{cor}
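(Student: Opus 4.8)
The plan is to treat the two conclusions—that $HK$ is closed and that $H\cap K$ is reductive—separately, after first reducing to a single case. Both conclusions are symmetric in $H$ and $K$: indeed $HK$ is closed if and only if $KH=(HK)\inverse$ is closed, since inversion is a homeomorphism of $G$, while $H\cap K=K\cap H$. So without loss of generality I would assume that $H\cap K$ contains a maximal torus of $K$. Since $H\cap K\sse H$, this maximal torus of $K$ then lies in $H$, so $H$ is a reductive subgroup of $G$ containing a maximal torus of $K$, which is precisely the hypothesis of Lemma~\ref{lem:HK}.

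The closedness of $HK$ is immediate: it is exactly the conclusion of Lemma~\ref{lem:HK} in this situation, so nothing further is needed for the first assertion.

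For reductivity of $H\cap K$, I would pass to the homogeneous space $X:=G/K$. Because $K$ is reductive, $X$ is affine by Theorem~\ref{thm:richquotient}, and the base point $x_0:=\pi_{G,K}(1)$ satisfies $G_{x_0}=K$ with $G\cdot x_0=X$. Restricting the $G$-action to $H$, the stabilizer of $x_0$ is $H_{x_0}=H\cap K$, and $H$ contains a maximal torus of $G_{x_0}=K$; hence Proposition~\ref{prop:Hxmaxrank} shows that $H\cdot x_0$ is closed in $G\cdot x_0=X$. (Alternatively, one may argue directly: $\pi_{G,K}\inverse(H\cdot x_0)=HK$ is closed and $X$ carries the quotient topology, so $H\cdot x_0$ is closed.) A closed subvariety of the affine variety $X$ is affine, so $H\cdot x_0$ is affine, and Lemma~\ref{lem:quotientorbit}(ii) then forces the stabilizer $H_{x_0}=H\cap K$ to be reductive.

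There is essentially no hard step here, as the corollary is a repackaging of Lemma~\ref{lem:HK}, Proposition~\ref{prop:Hxmaxrank} and Lemma~\ref{lem:quotientorbit}(ii). The only points that warrant a little care are the symmetry reduction—verifying that the ``or'' in the hypothesis genuinely reduces to the single case, because both conclusions are insensitive to swapping $H$ and $K$—and the identification of $H\cap K$ with the stabilizer $H_{x_0}$ of the base point of $G/K$, which is what lets me translate reductivity of $H\cap K$ into affineness of the orbit $H\cdot x_0$.
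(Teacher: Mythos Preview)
Your proof is correct and follows essentially the same approach as the paper's: reduce by symmetry to the case that $H\cap K$ contains a maximal torus of $K$, invoke Lemma~\ref{lem:HK} for closedness of $HK$, and then apply Proposition~\ref{prop:Hxmaxrank} to the $H$-action on the affine variety $G/K$ together with Lemma~\ref{lem:quotientorbit}(ii) to conclude that $H\cap K$ is reductive. Your write-up simply makes the symmetry reduction and the identification $H_{x_0}=H\cap K$ more explicit than the paper does.
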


\begin{proof}
Without loss, suppose $H\cap K$ contains a maximal torus of $K$.
The first conclusion is a special case of Lemma \ref{lem:HK}.
For the second, apply Proposition \ref{prop:Hxmaxrank} to the action of $G$ and $H$ on the quotient $G/K$.
Since $G/K$ is affine and $H\cdot \pi_{G,K}(1)$ is closed in $G/K$,
this orbit is also affine and hence the stabilizer $H_{\pi_{G,K}(1)}= H\cap K$ is reductive by Lemma \ref{lem:quotientorbit}(ii).
\end{proof}

A theme running through \cite{BMR} and subsequent papers on complete reducibility by the same authors is the following general question:
if $A$ and $H$ are subgroups of $G$ with $A\subseteq H$ and $H$ reductive,
what conditions ensure that if $A$ is $G$-cr then $A$ is $H$-cr, and vice versa?
Because of the link between complete reducibility and closed orbits in $G^n$ explained in Section~\ref{subsec:cr} above,
this is readily seen to be a special case of the general questions considered in this paper.
Since this was one of the original motivations for the work presented here, we briefly record some of the translations of our main results into the
language of complete reducibility and give a couple of other consequences in this setting.

First note that Proposition \ref{prop:Hxmaxrank} specializes to \cite[Prop.\ 3.19]{BMR} in the setting of complete reducibility:
that is, with notation as just set up, if $H$ also contains a maximal torus of $C_G(A)$ and $A$ is $G$-cr, then $A$ is $H$-cr.
More generally, we have:

\begin{prop}
\label{prop:AHG}
Suppose $H$ is a reductive subgroup of $G$, and let $A$ be a subgroup of $H$.
\begin{itemize}
\item[(i)] If $A$ is $H$-cr, then
$HC_G(A)$ is closed in $G$.
\item[(ii)] If
$A$ is $G$-cr, then $A$ is $H$-cr if and only if $HC_G(A)$ is closed in $G$.
\end{itemize}
\end{prop}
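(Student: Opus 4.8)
The plan is to translate every statement about complete reducibility into a statement about closed orbits in $G^n$ via the generic tuple machinery of Section~\ref{subsec:cr}. Fix a closed embedding of $G$ in some $\GL_m(k)$ and let $\mathbf{a}\in A^n$ be a generic tuple for $A$; since the associative subalgebra spanned by $A$ does not depend on whether we regard $A$ inside $H$ or inside $G$, the tuple $\mathbf{a}$ is simultaneously a generic tuple for $A$ relative to both $H$ and $G$. Let $G$ (and $H$ by restriction) act on $G^n$ by simultaneous conjugation. Then $G_{\mathbf{a}}=C_G(A)$, exactly as in the proof of Proposition~\ref{prop:htog}, and by \cite[Thm.\ 5.8(iii)]{GIT} we have that $A$ is $G$-cr if and only if $G\cdot\mathbf{a}$ is closed in $G^n$, while $A$ is $H$-cr if and only if $H\cdot\mathbf{a}$ is closed in $H^n$.

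The key observation is that $H$ is a closed subgroup of $G$, so $H^n$ is a closed $H$-stable subset of $G^n$; hence $H\cdot\mathbf{a}\subseteq H^n$ is closed in $H^n$ if and only if it is closed in $G^n$. Combining this with the previous paragraph, $A$ is $H$-cr if and only if $H\cdot\mathbf{a}$ is closed in $G^n$. On the other hand, applying Lemma~\ref{lem:closedhorbits}(i) with $K=G_{\mathbf{a}}=C_G(A)$ shows that $HC_G(A)$ is closed in $G$ if and only if $H\cdot\mathbf{a}$ is closed in $G\cdot\mathbf{a}$.

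For part (i), suppose $A$ is $H$-cr. Then $H\cdot\mathbf{a}$ is closed in $G^n$, so it is in particular closed in the subspace $G\cdot\mathbf{a}$, and therefore $HC_G(A)$ is closed in $G$ by the equivalence just recorded. For part (ii), suppose $A$ is $G$-cr, so that $G\cdot\mathbf{a}$ is closed in $G^n$. Now Lemma~\ref{lem:closedhorbits}(ii) applies directly: since $G\cdot\mathbf{a}$ is closed, $H\cdot\mathbf{a}$ is closed in $G^n$ if and only if $HC_G(A)$ is closed in $G$. As $H\cdot\mathbf{a}$ being closed in $G^n$ is equivalent to $A$ being $H$-cr, this gives exactly the desired equivalence.

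There is no serious obstacle here once the dictionary is set up; the only point requiring care is the asymmetry between the two parts, which comes precisely from whether or not $G\cdot\mathbf{a}$ is known to be closed in $G^n$. In part (i) we only learn that $H\cdot\mathbf{a}$ is closed in $G^n$ and must pass down to the orbit $G\cdot\mathbf{a}$, which yields a one-directional implication; the $G$-cr hypothesis in part (ii) guarantees closedness of $G\cdot\mathbf{a}$ itself and hence upgrades this to a genuine equivalence through Lemma~\ref{lem:closedhorbits}(ii).
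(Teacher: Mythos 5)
Your proof is correct and follows essentially the same route as the paper's: both arguments pass to a generic tuple $\mathbf{a}$ for $A$, use that $H^n$ is closed in $G^n$ to compare closedness of $H\cdot\mathbf{a}$ in $H^n$, $G^n$ and $G\cdot\mathbf{a}$, and then invoke Lemma~\ref{lem:closedhorbits}(i) for part (i) and Lemma~\ref{lem:closedhorbits}(ii) for part (ii). The only difference is that you spell out the dictionary more explicitly than the paper does.
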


\begin{proof}
(i). Let $\mathbf{a} \in H^n$ be a generic tuple for $A$.
Suppose $A$ is $H$-cr; then $H\cdot\mathbf{a}$ is closed in $H^n$.
Since
$H^n$ is closed in $G^n$, $H\cdot\mathbf{a}$ is closed in $G\cdot\mathbf{a}$.
Therefore, by Lemma \ref{lem:closedhorbits}(i), $HG_\mathbf{a} = HC_G(A)$ is closed in $G$.

(ii). Using a generic tuple for $A$ again, this becomes a direct application of Lemma \ref{lem:closedhorbits}(ii).
\end{proof}

The notions of reductive pairs from \cite[$\S$3]{rich2} and separability from \cite[Def.\ 3.27]{BMR} have proved useful in the study of complete reducibility:
see \cite[$\S$3.5]{BMR}, \cite{BMRT}, \cite{BHMR} for example.
Recall that the definitions of reductive pair and separable subgroup were given in Section~\ref{sec:sep}.  We have the following result:

\begin{prop}
\label{prop:redpair}
Suppose $(G,H)$ is a reductive pair.
Let $A$ be a separable subgroup of $G$ contained in $H$. Then $HC_G(A)$ is closed in $G$.
\end{prop}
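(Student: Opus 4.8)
The plan is to reduce the assertion to the closedness of a single orbit and then invoke Richardson's transversality argument for reductive pairs, with separability playing the role that Luna's \'etale slice plays in characteristic $0$.

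First I would fix $n$ large and choose a generic tuple $\mathbf{a}=(a_1,\dots,a_n)\in H^n$ for $A$ (Section~\ref{subsec:cr}), and let $G$ act on $G^n$ by simultaneous conjugation, so that $G_{\mathbf{a}}=C_G(A)$ and $H_{\mathbf a}=C_H(A)$. By Lemma~\ref{lem:closedhorbits}(i), $HC_G(A)=HG_{\mathbf a}$ is closed in $G$ if and only if $H\cdot\mathbf{a}$ is closed in $G\cdot\mathbf{a}$, where $H$ acts by restriction. Thus it suffices to prove that $H\cdot\mathbf{a}$ is closed in $G\cdot\mathbf{a}$.

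Next I would translate the separability hypothesis into separability of the orbit $G\cdot\mathbf{a}$. Computing the differential of the orbit map $\kappa_{\mathbf a}\colon G\to G\cdot\mathbf a$ at $1$ shows that $d_1\kappa_{\mathbf a}(X)=0$ precisely when $\Ad_G(a_i)(X)=X$ for all $i$; since $\mathbf a$ generates the associative algebra spanned by $A$, this says $X\in c_{\Gg}(A)$, so $\ker(d_1\kappa_{\mathbf a})=c_{\Gg}(A)$. As $\Lie(G_{\mathbf a})=\Lie(C_G(A))$, the orbit map $\kappa_{\mathbf a}$ is separable if and only if $\Lie(C_G(A))=c_{\Gg}(A)$, i.e. if and only if $A$ is separable in $G$. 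Hence, by hypothesis and Lemma~\ref{lem:quotientorbit}(iii), $\psi_{\mathbf a}\colon G/C_G(A)\to G\cdot\mathbf a$ is an isomorphism of varieties and $T_{\mathbf a}(G\cdot\mathbf a)=\Gg\cdot\mathbf a$, the image of $\Gg$ under the infinitesimal action at $\mathbf a$.

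The main step, and the main obstacle, is then to deduce that $H\cdot\mathbf a$ is closed in $G\cdot\mathbf a$ from the reductive pair structure; here I would run Richardson's transversality argument for reductive pairs \cite{rich2} (cf.\ \cite{BMR}). The $H$-module splitting $\Gg=\hh\oplus\mathfrak{m}$, together with the identification $T_{\mathbf a}(G\cdot\mathbf a)=\Gg\cdot\mathbf a$ from the previous paragraph, provides an $H$-equivariant complement $\mathfrak{m}\cdot\mathbf a$ to $\hh\cdot\mathbf a$ inside $T_{\mathbf a}(G\cdot\mathbf a)$; using the reductivity of $H$ one produces an $H$-stable affine subvariety through $\mathbf a$ transverse to the $G$-orbit and concludes that $H\cdot\mathbf a$ is closed in $G\cdot\mathbf a$. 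The delicate point is that in characteristic $0$ this transversality is supplied for free by Luna's \'etale slice theorem, whereas in positive characteristic slices need not exist; it is exactly the separability of $A$ in $G$, through the equality $T_{\mathbf a}(G\cdot\mathbf a)=\Gg\cdot\mathbf a$ established above, that keeps the infinitesimal computation valid and allows the argument to go through. Once $H\cdot\mathbf a$ is known to be closed in $G\cdot\mathbf a$, Lemma~\ref{lem:closedhorbits}(i) returns that $HC_G(A)$ is closed in $G$, as required.
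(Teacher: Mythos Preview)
Your proposal is correct and follows essentially the same route as the paper: pick a generic tuple $\mathbf{a}$ for $A$, use separability of $A$ in $G$ to get separability of the orbit $G\cdot\mathbf{a}$, invoke Richardson's tangent space argument for reductive pairs (in its $n$-tuple form due to Slodowy) to see that $H\cdot\mathbf{a}$ is closed in $G\cdot\mathbf{a}$, and then apply Lemma~\ref{lem:closedhorbits}(i). The only difference is in how the Richardson step is phrased: the paper states its conclusion as ``$G\cdot\mathbf{a}\cap H^n$ decomposes into finitely many $H$-orbits, each closed in $G\cdot\mathbf{a}\cap H^n$'' rather than via a transverse $H$-stable subvariety, but this is the same argument.
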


\begin{proof}
Let $\mathbf{a} \in H^n$ be a generic tuple for $A$.
Then $C_G(A) = G_\mathbf{a}$, and since $A$ is separable in $G$, the orbit $G\cdot\mathbf{a}$ is separable.
Now Richardson's ``tangent space argument'' \cite[$\S$3]{rich2} (generalized to $n$-tuples in \cite{slodowy}) shows that
$G\cdot\mathbf{a} \cap H^n$ decomposes into finitely many $H$-orbits, each of which is closed in $G\cdot\mathbf{a} \cap H^n$.
Since one of these orbits is $H\cdot \mathbf{a}$, we can conclude that $H\cdot\mathbf{a}$ is closed in $G\cdot\mathbf{a} \cap H^n$,
and hence in $G\cdot\mathbf{a}$.
Therefore, $HG_\mathbf{a} = HC_G(A)$ is closed in $G$ by Lemma \ref{lem:closedhorbits}(i).
\end{proof}

\begin{rem}
Note that \emph{every} pair $(G,H)$ of reductive groups with $H \leq G$ is a reductive pair in characteristic $0$
and the separability hypothesis is also automatic.
In characteristic $p>0$, every subgroup of $G$ is separable as long as $p$ is ``very good'' for $G$; see \cite[Thm.\ 1.2]{BMRT}.
\end{rem}

As a final remark, we note that there are Lie algebra analogues of Propositions~\ref{prop:AHG} and \ref{prop:redpair}, where we replace the subgroup $A$ with a Lie subalgebra of $\Lie(H)$.
For details of how to make such translations, see \cite[$\S$5]{GIT}, for example.

\end{document}